\DeclarePairedDelimiter{\group}{(}{)}
\DeclarePairedDelimiter{\sqgroup}{[}{]}
\DeclarePairedDelimiter{\set}{\{}{\}}
\DeclarePairedDelimiter{\avg}{\llbracket}{\rrbracket}
\DeclarePairedDelimiter{\norm}{\Vert}{\Vert}
\DeclarePairedDelimiter{\abs}{\vert}{\vert}
\newcommand{\xqed}[1]{%
  \leavevmode\unskip\penalty9999 \hbox{}\nobreak\hfill
  \quad\hbox{#1}}
\newcommand{\pths}{\Omega}
\newcommand{\posspace}{\mathcal{X}}
\newcommand{\sits}{\mathbb{S}}
\newcommand{\outcomes}{\set{0,1}}
\newcommand{\selections}{\set{0,1}}
\newcommand{\frcstsystem}{{\lex_\bullet}}
\newcommand{\frcstsystems}{{\smash{\lexs^\sits}}}
\newcommand{\frcstsystemscomp}{\smash{\lexs^\sits_{\mathrm{C}}(\pth)}}
\newcommand{\frcstsystemscompstat}{\lexs_{\mathrm{C}}(\pth)}
\newcommand{\process}{F}
\newcommand{\processtoo}{G}
\newcommand{\selection}{S}
\newcommand{\multprocess}{D}
\newcommand{\mint}[1][\multprocess]{#1^\circledcirc}
\newcommand{\supermartin}{M}
\newcommand{\submartin}{M}
\newcommand{\submartins}[1][\frcstsystem]{\underline{\mathbb{M}}(#1)}
\newcommand{\supermartins}[1][\frcstsystem]{\overline{\mathbb{M}}(#1)}
\newcommand{\test}{\supermartin}
\newcommand{\callowable}[1]{\overline{\mathbb{M}}_{\mathrm{C}}(#1)}
\newcommand{\callowabletests}[1]{\overline{\mathbb{M}}_{\mathrm{C}}(#1)}
\newcommand{\naturals}{\mathbb{N}}
\newcommand{\naturalswithzero}{\mathbb{N}_0}
\newcommand{\reals}{\mathbb{R}}
\newcommand{\rationals}{\mathbb{Q}}
\newcommand{\ex}{E}
\newcommand{\lex}{\underline{\ex}}
\newcommand{\lexs}{\underline{\mathcal{E}}}
\newcommand{\uex}{\overline{\ex}}
\newcommand{\gambleinterval}[1][]{\sqgroup{\underline{\gamble},\overline{\gamble}}}
\newcommand{\init}{\square}
\newcommand{\pth}{\omega}
\newcommand{\sit}{s}
\newcommand{\xval}[1][]{x_{#1}}
\newcommand{\xvaltolong}[1][n]{\xval[1],\dots,\xval[#1]}
\newcommand{\xvalto}[1][n]{\xval[1:#1]}
\newcommand{\xvaltok}{\xval[1:k]}
\newcommand{\xvaltom}{\xval[1:m]}
\newcommand{\xvaltol}{\xval[1:l]}
\newcommand{\xvalnextk}{\xval[k+1]}
\newcommand{\xvaltolongsum}[1][n]{\xval[1],\dots,\xval[#1]}
\newcommand{\xvalnextsum}{\xval[k+1]}
\newcommand{\gamble}{f}
\newcommand{\gambles}{\mathcal{L}(\posspace)}
\newcommand{\then}{\Rightarrow}
\newcommand{\ifandonlyif}{\Leftrightarrow}
\newcommand{\adddelta}{\Delta}
\newcommand{\varnorm}[1]{\norm{#1}_\mathrm{v}}
\renewenvironment{proof}[1][\proofname]{{\noindent\bfseries #1 }}{\qed}\par\vspace{\topsep}
\newenvironment{SUM}{}{}
\newenvironment{ArxiveExt}{}{}
\begin{document}
\title{Computable randomness is about more than probabilities
}
%
%
\author{Floris Persiau 
\and
Jasper De Bock 
\and
Gert de Cooman 
}
\authorrunning{F.~Persiau et al.}
%
\institute{FLip, ELIS, Ghent University, Belgium}
\maketitle              
\begin{abstract}
We introduce a notion of computable randomness for infinite sequences that generalises the classical version in two important ways. 
First, our definition of computable randomness is associated with imprecise probability models, in the sense that we consider lower expectations---or sets of probabilities---instead of classical `precise' probabilities. 
Secondly, instead of binary sequences, we consider sequences whose elements take values in some finite sample space. 
Interestingly, we find that every sequence is computably random with respect to at least one lower expectation, and that lower expectations that are more informative have fewer computably random sequences. 
This leads to the intriguing question whether every sequence is computably random with respect to a unique most informative lower expectation. 
We study this question in some detail and provide a partial answer.

\keywords{computable randomness  \and coherent lower expectations \and imprecise probabilities \and supermartingales \and computability.}
\end{abstract}
\section{Introduction}
\label{sec:intro}

When do we consider an infinite sequence $\pth = (\xvaltolong,\dots)$, whose individual elements $x_n$ take values in some finite sample space $\posspace$, to be random? 
This is actually not a fair question, because randomness is never defined absolutely, but always relative to an uncertainty model. 
Consider for example an infinite sequence generated by repeatedly throwing a single fair die and writing down the number of eyes on each throw. 
In this case, we would be justified in calling this sequence random with respect to a precise probability model that assigns probability $\nicefrac{1}{6}$ to every possible outcome.

It is exactly such precise probability models that have received the most attention in the study of randomness \cite{ambosspies2000,bienvenu2009,Rute2012}. 
Early work focused on binary sequences and the law of large numbers that such sequences, and computably selected subsequences, were required to satisfy: an infinite binary sequence of zeros and ones is called \emph{Church random} if the relative frequencies in any computably selected subsequence converge to $\nicefrac{1}{2}$ \cite{ambosspies2000}. 
Schnorr, inspired by the work of Ville, strengthened this definition by introducing a notion of computable randomness \cite{Rute2012}. 
On his account, randomness is about betting. 
The starting point is that a precise probability model that assigns a (computable) probability $p$ to $1$ and $1-p$ to $0$ can be interpreted as stating that $p$ is a fair price for bet $\mathbb{I}_1(X_i)$ that yields $1$ when $X_i=1$ and $0$ when $X_i=0$, for every---a priori unknown---value $X_i$ of a binary sequence $\pth = (\xvaltolong,\dots)$ of zeros and ones. 
Such a sequence is then considered to be \emph{computably random} with respect to $p$ if there is no computable betting strategy for getting rich without bounds along $\pth$ without borrowing, simply by betting according to this fair price. 
Notably, binary sequences that are computably random for $p=\nicefrac{1}{2}$ are also Church random. 
So here too, the relative frequency of any element $x \in \posspace$ will converge to a limit frequency along $\pth$ --- $\nicefrac{1}{2}$ in the binary case for $p=\nicefrac{1}{2}$. 
In fact, this is typically true for any notion of randomness with respect to a precise probability model.

However, as has been argued extensively \cite{gorban2016}, there are various random phenomena where this stabilisation is not clearly present, or even clearly absent. 
Hence, only adopting precise probability models to define notions of random sequences is too much of an idealisation. 
Recently, this issue was addressed by De Cooman and De Bock for binary sequences by introducing a notion of computable randomness with respect to probability intervals instead of precise probability models, whose lower bounds represent supremum acceptable buying prices, and whose upper bounds represent infimum acceptable selling prices, again for the bet $\mathbb{I}_1(X_i)$ that, for every value $x_i$ of a binary sequence $\pth = (\xvaltolong,\dots)$, yields $1$ if $X_i=1$ and $0$ otherwise \cite{CoomanBock2017}. 

On this account, relative frequencies must not necessarily converge to a limit frequency along $\pth$, but may fluctuate within the probability interval. 

Here, we generalise the work done by De Cooman and De Bock \cite{CoomanBock2017} for binary sequences, and develop a similar concept for infinite sequences that take values in more general finite sample spaces. 
To this end, we consider an even more general framework for describing uncertainty: we use coherent lower expectations---or sets of probability mass functions---instead of probability intervals or probabilities. 
Loosely speaking, we say that an infinite sequence $\pth=(\xvaltolong,\dots)$ is \emph{computably random} with respect to a (forecasting system of) lower expectation(s), when there is no computable betting strategy for getting rich without bounds along $\pth$ without borrowing and by only engaging in bets whose (upper) expected profit is non-positive or negative.\footnote{A real number $x \in \reals$ is called positive if $x>0$, non-negative if $x \geq 0$, negative if $x <0$ and non-positive if $x \leq 0$.}

This contribution is structured as follows. 
We start in Section 2 with a brief introduction to coherent lower expectations, and explain in particular their connection with probabilities and their interpretation in terms of gambles and betting. 
Next, in Section 3, we define a subject's uncertainty for an infinite sequence of variables $X_1,\dots,X_n,\dots$ by introducing forecasting systems that associate with every finite sequence $(\xvaltolong)$ a coherent lower expectation for the variable $X_{n+1}$. 
This allows us to introduce corresponding betting strategies to bet on the infinite sequence of variables along a sequence $\pth=(\xvaltolong,\dots)$ in terms of non-negative (strict) supermartingales. 
After explaining in Section 4 when such a non-negative (strict) supermartingale is computable, we extend the existing notion of computable randomness from precise and interval probability models to coherent lower expectations in Section 5, and study its properties. 
The remainder of the paper focuses on special cases. 
When we restrict our attention to stationary forecasting systems that forecast a single coherent lower expectation in Section 6, it turns out that every sequence $\pth$ is computably random with respect to at least one coherent lower expectation and that if $\pth$ is computably random for some coherent lower expectation, then it is also computably random for any coherent lower expectation that is less informative, i.e., provides fewer gambles. 
This makes us question whether there is a unique most informative coherent lower expectation for which $\pth$ is computably random. 
After inspecting some examples, it turns out that such a most informative coherent lower expectation sometimes exists, but sometimes does not. 
When it does not, our examples lead us to conjecture that it `almost' exists. 
We conclude the discussion in Section 7 by introducing a derived notion of computable randomness with respect to a gamble $\gamble$ and an interval $I$ by focusing on the behaviour of coherent lower expectations on a specific gamble $\gamble$ of their domain. 
It turns out that for every gamble $\gamble$, a sequence $\pth$ is `almost' computably random with respect to some smallest interval.
\begin{SUM}
To adhere to the page constraints, all proofs are omitted. 
They are available in an extended on-line version~\cite{sum2020persiau:arxiv}.
\end{SUM}

\section{Coherent lower expectations}
\label{sec:singleforecast}

To get the discussion started, we consider a single uncertain variable $X$ that takes values in some finite set $\posspace$, called the sample space. 
A subject's uncertainty about the unknown value of $X$ can then be modelled in several ways. 
We will do so by means of a coherent lower expectation: a functional that associates a real number with every gamble, where a gamble $\gamble:\posspace \to \reals$ is a map from the sample space $\posspace$ to the real numbers. 
We denote the linear space of all gambles by $\gambles$. 

\begin{definition}
A coherent lower expectation $\lex \colon \gambles \to \reals$ is a real-valued functional on $\gambles$ that satisfies the following axioms. 
For all gambles $f,g \in \gambles$ and all non-negative $\alpha\in\reals$:
\begin{enumerate}[label=\upshape{C}\arabic*.,ref=\upshape{C}\arabic*,leftmargin=*]
\item\label{axiom:coherence:bounds} $\min\gamble \leq \lex(\gamble)$  \hfill{\upshape[boundedness]}
\item\label{axiom:coherence:homogeneity} $\lex(\alpha \gamble) = \alpha \lex(\gamble)$ \hfill{\upshape[non-negative homogeneity]}
\item\label{axiom:coherence:subadditivity} $\lex(\gamble) + \lex(g) \leq \lex(\gamble+g)$ \hfill{\upshape[superadditivity]}
\end{enumerate}
We will use \smash{$\lexs$} to denote the set of all coherent lower expectations on $\gambles$.
\end{definition}

As a limit case, for any probability mass function $p$ on $\posspace$, it is easy to check that the linear expectation $\ex_p$, defined by $\ex_p(\gamble) \coloneq \sum_{x \in \posspace}\gamble(x) p(x)$ for all $\gamble \in \gambles$, is a coherent lower expectation, which corresponds to a maximally informative or least conservative model for a subject's uncertainty. 
More generally, a coherent lower expectation $\lex$ can be interpreted as a lower envelope of such linear expectations. 
That is, there is always a (closed and convex) set~$\mathcal{M}$ of probability mass functions such that $\lex(\gamble)=\min\{\ex_p(\gamble)\colon p \in \mathcal{M}\}$ for all $\gamble \in \gambles$ \cite{walley1991}. 
In that sense, coherent lower expectations can be regarded as a generalisation of probabilities to (closed and convex) sets of probabilities.
Alternatively, the lower expectation $\lex(\gamble)$ can be interpreted directly as a subject's supremum buying price for the uncertain reward $\gamble$. 

The particular interpretation that is adopted is not important for what we intend to do here. 
For our purposes, the only thing we will assume is that when a subject specifies a coherent lower expectation, every gamble $\gamble \in \gambles$ such that $\lex(\gamble) >0$ is desirable to him and every gamble $\gamble \in \gambles$ such that $\lex(\gamble) \geq 0$ is acceptable to him. 
We think this makes sense under both of the aforementioned interpretations. 
Furthermore, as we will see in Section~\ref{sec:randomsequences}, the distinction between desirable and acceptable gambles does not matter for our definition of computable randomness. 
For now, however, we proceed with both notions. 

Whenever a subject specifies a coherent lower expectation, we can consider an opponent that takes this subject up on a gamble $\gamble$ on the unknown outcome $X$ in a betting game. 
Borrowing terminology from the field of game-theoretic probabilities \cite{ShaferVovk2019}, we will refer to our subject as Forecaster and to his opponent as Sceptic.
Forecaster will only bet according to those gambles $\gamble \in \gambles$ that are acceptable to him ($\lex(\gamble) \geq 0$), or alternatively, those that are desirable to him ($\lex(\gamble) > 0$). 
This leads to an unknown reward $\gamble(X)$ for Forecaster and an unknown reward $-\gamble(X)$ for Sceptic. 
After Sceptic selects such a gamble, the outcome $x \in \posspace$ is revealed, Forecaster receives the (possibly negative) reward $\gamble(x)$, and Sceptic receives the reward $-\gamble(x)$.
Equivalently, when considering for any coherent lower expectation $\lex$ the conjugate upper expectation $\smash{\uex}$, defined as \smash{$\uex(\gamble)\coloneq-\lex(-\gamble)$} for all $\gamble \in \gambles$, then Sceptic is allowed to bet according to any gamble $f \in \gambles$ for which $\uex(f) \leq 0$ (or $\uex(f) < 0$), leading to an uncertain reward $f(X)$ for Sceptic and an uncertain reward $-f(X)$ for Forecaster.
In what follows, we will typically take the perspective of Sceptic. 
The gambles that are available to her will thus be the gambles $f \in \gambles$ with non-positive (or negative) upper expectation $\uex(f)\leq 0$ ($\uex(f) < 0$).

An important special case is the so-called vacuous coherent lower expectation~$\lex_v$, defined by $\lex_v(\gamble)\coloneq\min \gamble$ for all $\gamble \in \gambles$. 
If Forecaster specifies~$\lex_v$, this corresponds to a very conservative attitude where he is only interested in gambles $\gamble$ that give him a guaranteed non-negative (or positive) gain, i.e., $\min \gamble \geq 0$ ($\min \gamble > 0$), implying that Sceptic has a guaranteed non-negative (or positive) loss, i.e., $\max \gamble \leq 0$ ($\max \gamble < 0$). 

\begin{example} \label{ExampleSection2}
Consider an experiment with three possible outcomes $A, B$ and $C$, i.e., $\posspace\coloneq\{A,B,C\}$, and three probability mass functions $p_0,p_1$ and $p_2$ defined by $(p_0(A),p_0(B),p_0(C))\coloneq(0,\nicefrac{1}{2},\nicefrac{1}{2})$, $(p_1(A),p_1(B),p_1(C))\coloneq(\nicefrac{1}{2},0,\nicefrac{1}{2})$ and $(p_2(A),p_2(B),p_2(C))\coloneq(\nicefrac{1}{2},\nicefrac{1}{2},0)$. 
We can then define a coherent lower expectation $\lex$ by $\lex(\gamble) \coloneq \min\{\ex_{p_0}(\gamble),\ex_{p_1}(\gamble),\ex_{p_2}(\gamble)\}$ for every gamble $\gamble \in \gambles$.
For the particular gamble $\gamble$ defined by $(f(A),f(B),f(C)) \coloneq (1,-2,3)$, the value of this lower expectation then equals $\lex(f)=\min\{\nicefrac{1}{2},2,\nicefrac{-1}{2}\}=\nicefrac{-1}{2}$.
\xqed{$\lozenge$}
\end{example}
\newpage

\section{Forecasting systems and betting strategies}
\label{sec:forecastingsystems}

We now consider a sequential version of the betting game in Section~\ref{sec:singleforecast} between Forecaster and Sceptic, by considering a sequence of variables $\allowbreak X_1,\dots, X_n \allowbreak , \dots$, all of which take values in our finite sample space $\posspace$. 

On each round of the game, indexed by $n \in \naturalswithzero\coloneqq\naturals\cup\{0\}$, 
the a priori unknown finite sequence of outcomes $x_{1:n}=(x_1,\dots,x_{n})$ has been revealed and we assume that Forecaster's uncertainty about the next---as yet unknown---outcome $X_{n+1} \in \posspace$ is described by a coherent lower expectation. 
Hence, on each round of the game, Forecaster's uncertainty can depend on and be indexed by the past states. 

All finite sequences $\sit =x_{1:n}=(\xvaltolong)$---so-called situations---are collected in the set $\sits \coloneqq \posspace^*=\bigcup_{n\in\naturalswithzero}\posspace^n$. 
By convention, we call the empty sequence the initial situation and denote it by $\init$.
The finite sequences $\sit \in \sits$ form an event tree, and it is on this whole event tree that we will describe Forecaster's uncertainty, using a so-called forecasting system.
\begin{definition}
\label{def:ForecastingSystem}
A \emph{forecasting system} $\frcstsystem \colon \sits \to \lexs$ is a map that associates with every situation $\sit \in \sits$ a coherent lower expectation $\lex_\sit \in \lexs$. 
The collection of all forecasting systems is denoted by~$\frcstsystems$. 
\end{definition}

Every forecasting system corresponds to a collection of bets that are available to Sceptic. 
That is, in every situation $\sit=\xvalto$, Sceptic is allowed to bet on the unknown outcome $X_{n+1}$ according to any gamble $\gamble \in \gambles$ such that $\uex_\sit(\gamble) \leq 0$ (or $\uex_\sit(\gamble) < 0$). 
This leads to an uncertain reward $\gamble(X_{n+1})$ for Sceptic and an uncertain reward $-\gamble(X_{n+1})$ for Forecaster. 
Afterwards, when the outcome $x_{n+1}$ is revealed, Sceptic gets the amount $\gamble(x_{n+1})$, Forecaster gets the amount $-\gamble(x_{n+1})$ and we move to the next round. 
To formalise this sequential betting game, we introduce the notion of a supermartingale, which is a special case of a so-called real process.

A real process $\process \colon \sits \to \reals$ is a map that associates with every situation $\sit=\xvalto \in \sits$ of the event tree, a real number $\process(\sit)$. 
With every real process $\process$ there corresponds a process difference $\adddelta \process$ that associates with every situation $\sit \in \sits$ a gamble $\adddelta \process (\sit)$ on $\posspace$, defined as $\adddelta \process (s)(x)\coloneqq\process(sx)-\process(s)$ for every $\sit \in \sits$ and $x \in \posspace$, where $\sit x$ denotes the concatenation of $\sit$ and $x$. 
We call a real process $\submartin$ a (strict) supermartingale if $\uex_s(\adddelta\supermartin(s)) \leq 0$ ($\uex_s(\adddelta\supermartin(s)) < 0$) for every situation $\sit \in \sits$. 
Note that a supermartingale is always defined relative to a forecasting system~$\frcstsystem$. 
Similarly, a real process $\submartin$ is called a (strict) submartingale if $\lex_s(\adddelta\submartin(s)) \geq 0$ ($\lex_s(\adddelta\submartin(s)) > 0$) for every $\sit \in \sits$. 
Due to the conjugacy relation between upper and lower expectations, $\supermartin$ is a (strict) supermartingale if and only if $-\supermartin$ is a (strict) submartingale. 
We collect the super- and submartingales in the sets $\supermartins$ and $\submartins$, respectively.
A supermartingale $\supermartin$ is called non-negative (positive) if $\supermartin(s)\geq 0$ ($\supermartin(s)> 0$) for all $\sit \in \sits$.

From the previous discussion, it is clear that Sceptic's allowable betting behaviour corresponds to supermartingales or strict supermartingales, depending on whether we consider acceptable or desirable gambles, respectively.
Indeed, in each situation $\sit=x_{1:n} \in \sits$, she can only select a gamble $\adddelta\supermartin(\sit)$ for which $\uex_\sit(\adddelta\supermartin(\sit)) \leq 0$ ($\uex_\sit(\adddelta\supermartin(\sit)) < 0$) and her accumulated capital $\supermartin(\xvalto) = \supermartin(\init)+\sum_{k=0}^{n-1}\adddelta\supermartin(\xvaltok)(\xvalnextk)$, with $\supermartin(\init)$ being her initial capital, will therefore evolve as a (strict) supermartingale. 
As mentioned before, it will turn out not to matter whether we consider acceptable or desirable gambles, or equivalently, supermartingales or strict supermartingales. 
To be able to explain why that is, we will proceed with both. 
In particular, we will restrict Sceptic's allowed betting strategies to non-negative (strict) supermartingales, where the non-negativity is imposed to prevent her from borrowing money. 
Non-negative supermartingales~$\test$ that start with unit capital~$\test(\init)$ are called test supermartingales.

\begin{example}
Consider a repetition of the experiment in Example~\ref{ExampleSection2}, and a stationary forecasting system $\frcstsystem$ defined by $\lex_\sit(\gamble)=\lex(\gamble)= \min\{\ex_{p_0}(\gamble),\ex_{p_1}(\gamble),\allowbreak \ex_{p_2}(\gamble)\}$ for every $\sit \in \sits$ and $\gamble \in \gambles$, with $p_0,p_1$ and $p_2$ as in Example~\ref{ExampleSection2}.
An example of a non-negative (test) supermartingale $\supermartin$ is then given by the recursion equation $\adddelta \test(\sit) = (\adddelta \test(\sit)(A),\adddelta \test(\sit)(B),\allowbreak\adddelta \test(\sit)(C)) \coloneq (\nicefrac{-\test(\sit)}{2},\nicefrac{\test(\sit)}{2},\nicefrac{-\test(\sit)}{2})$ for every $\sit \in \sits$, with $\test(\init) \coloneq 1$. E.g., for $\sit=A$, it follows that $\test(A)=M(\init)+\adddelta\test(\init)(A)=\test(\init)-\nicefrac{\test(\init)}{2}=\nicefrac{\test(\init)}{2}=\nicefrac{1}{2}$.
It is easy to see that $\supermartin$ is non-negative by construction and, for every $\sit \in \sits$, it holds that $\uex_\sit(\adddelta \test(\sit))=\max\{0,\nicefrac{-\test(\sit)}{2},0\}=0$.
\xqed{$\lozenge$}
\end{example}

In what follows, we will use Sceptic's allowed betting strategies---so non-negative (strict) supermartingales---to introduce a notion of computable randomness with respect to a forecasting system. 
We denote the set of all infinite sequences of states---or so-called paths---by $\pths \coloneqq \posspace^\naturals$ and, for every such path $\pth=(\xvaltolong,\dots)\in\pths$, we let $\pth^n\coloneqq(\xvaltolong)$ for all $n \in \naturalswithzero$.

However, not all betting strategies within the uncountable infinite set of all allowed betting strategies are implementable. 
We will therefore restrict our attention to those betting strategies that are computable, as an idealisation of the ones that can be practically implemented.

\section{A brief introduction to computability}
\label{sec:computability}

Computability deals with the ability to compute mathematical objects in an effective manner, which means that they can be approximated to arbitrary precision in a finite number of steps. 
In order to formalise this notion, computability theory uses so-called recursive functions as its basic building blocks \cite{LiVitanyi2008,Pour-ElRichards2016}.

A function $\phi\colon\naturalswithzero\to\naturalswithzero$ is recursive if it can be computed by a Turing machine, which is a mathematical model of computation that defines an abstract machine. 
By the Church--Turing thesis, this is equivalent to the existence of an algorithm that, upon the input of a natural number $n \in \naturalswithzero$, outputs the natural number $\phi(n)$. 
The domain $\naturalswithzero$ can also be replaced by any other countable set whose elements can be expressed by adopting a finite alphabet, which for example allows us to consider recursive functions from $\sits$ to $\naturalswithzero$ or from $\sits \times \naturalswithzero$ to $\naturalswithzero$. 
Any set of recursive functions is countable, because the set of all algorithms, which are finite sequences of computer-implementable instructions, is countable.

We can also consider recursive sequences of rationals, recursive rational processes and recursive nets of rationals.
A sequence $\{r_n\}_{n \in \naturalswithzero}$ of rational numbers is called recursive if there are three recursive maps $a,b,\sigma$ from $\naturalswithzero$ to $\naturalswithzero$ such that $b(n)\neq 0$ for all $n \in \naturalswithzero$ and \smash{$r_n=(-1)^{\sigma(n)}\frac{a(n)}{b(n)}$} for all $n \in \naturalswithzero$.
By replacing the domain $\naturalswithzero$ with $\sits$, we obtain a recursive rational process.
That is, a rational process $\process\colon \sits \to \rationals$ is called recursive if there are three recursive maps $a,b,\sigma$ from $\sits$ to $\naturalswithzero$ such that $b(\sit)\neq 0$ for all $\sit \in \sits$ and \smash{$\process(\sit)=(-1)^{\sigma(\sit)}\frac{a(\sit)}{b(\sit)}$} for all $\sit \in \sits$. 
In a similar fashion, a net of rationals $\{r_{\sit,n}\}_{\sit \in \sits, n \in \naturalswithzero}$ is called recursive if there are three recursive maps $a,b,\sigma$ from $\sits \times \naturalswithzero \textrm{ to } \naturalswithzero$ such that $b(\sit,n)\neq 0$ for every $\sit \in \sits$ and $n \in \naturalswithzero$, and $r_{\sit,n}=(-1)^{\sigma(\sit,n)}\frac{a(\sit,n)}{b(\sit,n)}$ for all $\sit \in \sits$ and $n \in \naturalswithzero$.

Using these recursive objects, we now move on to define the following mathematical objects that can be computed in an effective manner: computable reals, computable real gambles, computable probability mass functions and, finally, computable real processes such as non-negative supermartingales. 

We say that a sequence $\{r_n\}_{n \in \naturalswithzero}$ of rational numbers converges effectively to a real number $x \in \reals$ if \smash{$\abs{r_n-x} \leq 2^{-N}$} for all $n,N \in \naturalswithzero$ such that $n \geq N$. 
A real number $x$ is then called computable if there is a recursive sequence $\{r_n\}_{n \in \naturalswithzero}$ of rationals that converges effectively to $x$. 
Of course, every rational number is a computable real. 
A gamble $f\colon \posspace \to \reals$ and a probability mass function $p\colon \posspace \to [0,1]$ are computable if $\gamble(x)$ or $p(x)$ is computable for every $x \in \posspace$, respectively. 
After all, finitely many algorithms can be combined into one.

However, a real process $\process \colon \sits \to \reals$ may not be computable even if each of its individual elements $\process(\sit)$ is, with $\sit \in \sits$, because there may be no way to combine the corresponding infinite number of algorithms into one finite algorithm. 
For that reason, we will look at recursive nets of rationals instead of recursive sequences of rationals.
We say that a net $\{r_{\sit,n}\}_{\sit \in \sits, n \in \naturalswithzero}$ of rational numbers converges effectively to a real process $\process\colon \sits \to \reals$ if \smash{$\abs{r_{\sit,n}-\process(\sit)} \leq 2^{-N}$} for all $\sit \in \sits$ and $n,N \in \naturalswithzero$ such that $n \geq N$. 
A real process $\process$ is then called computable if there is a recursive net $\{r_{\sit,n}\}_{\sit \in \sits, n \in \naturalswithzero}$ of rationals that converges effectively to $\process$. 
Of course, every recursive rational process is also a computable real process.
Observe also that, clearly, for any computable real process $F$ and any $\sit \in \sits$, $\process(\sit)$ is a computable real number. 
Furthermore, a constant real process is computable if and only if its constant value is. 

To end this section, we would like to draw attention to the fact that the set of all real processes is uncountable, while the set of all computable real (or recursive rational) processes is countable, simply because the set of all algorithms is countable. 
In the remainder, we will denote by $\callowable{\frcstsystem}$ the set of all computable non-negative supermartingales for the forecasting system $\frcstsystem$.

\section{Computable randomness for forecasting systems}
\label{sec:randomsequences}

At this point, it should be clear how Forecaster's uncertainty about a sequence of variables $X_1,\dots,X_n,\dots$ can be represented by a forecasting system $\frcstsystem$, and that such a forecasting system gives rise to a set of betting strategies whose corresponding capital processes are non-negative (strict) supermartingales.
We will however not allow Sceptic to select any such betting strategy, but will require that her betting strategies should be effectively implementable by requiring that the corresponding non-negative (strict) supermartingales are computable. 
In this way, we restrict Sceptic's betting strategies to a countably infinite set. 
We will now use these strategies to define a notion of computable randomness with respect to a forecasting system $\frcstsystem$. 
The definition uses supermartingales rather than strict supermartingales, but as we will see shortly, this makes no difference.
Loosely speaking, we call a path $\pth$ computably random for $\frcstsystem$ if there is no corresponding computable betting strategy $\supermartin$ that allows Sceptic to become rich without bounds along $\pth$, i.e., $\sup_{n \in \naturalswithzero}\supermartin(\pth^n)=+\infty$, without borrowing.

\begin{definition}
\label{def:randomsequences}
A path $\pth$ is \emph{computably random} for a forecasting system $\frcstsystem$ if there is no computable non-negative real supermartingale \smash{$\supermartin \in \callowable{\frcstsystem}$} that is unbounded along $\pth$. 
We denote the collection of all forecasting systems for which $\pth$ is computably random by $\frcstsystemscomp$.
\end{definition}

It turns out that our definition is reasonably robust with respect to the particular types of supermartingales that are considered.

\begin{proposition} \label{prop:equivalent}
A path $\pth$ is computably random for a forecasting system $\frcstsystem$ if and only if there is no recursive positive rational strict test supermartingale $\test \in \callowable{\frcstsystem}$ such that $\lim_{n \to \infty}\test(\pth^n)=+\infty$.
\end{proposition}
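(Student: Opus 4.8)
The plan is to prove the contrapositive on both sides simultaneously. By Definition~\ref{def:randomsequences}, computable randomness of $\pth$ for $\frcstsystem$ is the non-existence of a computable non-negative real supermartingale in $\callowable{\frcstsystem}$ that is unbounded along $\pth$, so it suffices to show that such a supermartingale exists \emph{if and only if} there exists a recursive positive rational strict test supermartingale $\test\in\callowable{\frcstsystem}$ with $\lim_{n\to\infty}\test(\pth^n)=+\infty$. One direction is immediate: a recursive positive rational strict test supermartingale is in particular a computable non-negative real supermartingale, and $\lim_{n\to\infty}\test(\pth^n)=+\infty$ forces $\sup_{n\in\naturalswithzero}\test(\pth^n)=+\infty$, i.e.\ unboundedness along $\pth$. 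All the work lies in the converse, where from a computable non-negative real supermartingale $\supermartin\in\callowable{\frcstsystem}$ that is unbounded along $\pth$ I would build the desired $\test$ in three steps, using throughout only that each $\uex_\sit$ is monotone, non-negatively homogeneous, subadditive and constant-additive (the conjugate form of~\ref{axiom:coherence:bounds}--\ref{axiom:coherence:subadditivity}).

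The first step turns $\supermartin$ into a recursive rational non-negative supermartingale $\processtoo$ that dominates it. Note that the supermartingale inequality $\uex_\sit(\adddelta\supermartin(\sit))\leq0$ rewrites, by constant-additivity, as $\uex_\sit(\supermartin(\sit\andoutcome))\leq\supermartin(\sit)$, where $\supermartin(\sit\andoutcome)$ is the gamble $x\mapsto\supermartin(\sit x)$. Writing $n\coloneq\abs{\sit}$, I would let $\processtoo(\sit)$ be a rational in $[\supermartin(\sit)+2^{-n-1},\supermartin(\sit)+2^{-n}]$: such a rational exists because the interval has positive length, and it can be computed from $\sit$ by approximating $\supermartin(\sit)$ to within $2^{-n-3}$ with the recursive net witnessing the computability of $\supermartin$ and then returning the midpoint of the resulting rational sub-interval, which makes $\processtoo$ recursive. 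The geometric margins are tuned so that the inequality survives pointwise: since $\processtoo(\sit x)\leq\supermartin(\sit x)+2^{-n-1}$ for every $x\in\posspace$ while $\processtoo(\sit)\geq\supermartin(\sit)+2^{-n-1}$, monotonicity and constant-additivity of $\uex_\sit$ together with $\uex_\sit(\supermartin(\sit\andoutcome))\leq\supermartin(\sit)$ give $\uex_\sit(\processtoo(\sit\andoutcome))\leq\supermartin(\sit)+2^{-n-1}\leq\processtoo(\sit)$, that is $\uex_\sit(\adddelta\processtoo(\sit))\leq0$. As $\processtoo\geq\supermartin\geq0$, it is non-negative and still unbounded along $\pth$.

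The second step upgrades unboundedness to convergence to $+\infty$. For each $k\in\naturals$ I would let $\processtoo^{(k)}$ follow $\processtoo$ until the first situation where it reaches $2^k$ and freeze its value thereafter; the increment of $\processtoo^{(k)}$ in each situation is then either $0$ or that of $\processtoo$, so $\uex_\sit(\adddelta\processtoo^{(k)}(\sit))\leq0$ and stopping preserves the supermartingale property, while the rational-valuedness of $\processtoo$ makes the stopping decidable, keeping $\processtoo^{(k)}$ recursive rational and non-negative. Setting $\processalso\coloneq\sum_{k=1}^{\infty}2^{-k}\processtoo^{(k)}$, at any fixed $\sit$ all but finitely many summands equal $\processtoo(\sit)$, so this reduces to a finite sum plus a geometric tail times $\processtoo(\sit)$ and $\processalso$ is again recursive rational; finite non-negative homogeneity and subadditivity then yield $\uex_\sit(\adddelta\processalso(\sit))\leq0$, and non-negativity is clear. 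Because $\processtoo$ reaches every level $2^k$ along $\pth$ (unboundedness of a sequence is the same as having $\limsup=+\infty$), the frozen value of $\processtoo^{(k)}$ along $\pth$ is eventually at least $2^k$, so $\liminf_{n\to\infty}\processalso(\pth^n)\geq\sum_{k=1}^{K}2^{-k}2^{k}=K$ for every $K\in\naturals$, whence $\lim_{n\to\infty}\processalso(\pth^n)=+\infty$.

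The final step enforces positivity, strictness and unit initial capital at once. Let $\process$ be the recursive rational process with $\process(\init)\coloneq1$ and constant increments $\adddelta\process(\sit)\coloneq-2^{-\abs{\sit}-2}$; since $\uex_\sit$ sends the constant gamble $-2^{-\abs{\sit}-2}$ to itself, $\process$ is a strict supermartingale for \emph{every} forecasting system, is positive (its value stays above $\nicefrac{1}{2}$ along every path) and is recursive rational. After replacing $\processalso$ by $\processalso+1$ so that its initial value is a positive rational, the convex combination $\test\coloneq\frac{1}{2}\,\processalso/\processalso(\init)+\frac{1}{2}\,\process$ is recursive rational, satisfies $\test(\init)=1$ and $\test(\sit)>\nicefrac{1}{4}>0$, has $\uex_\sit(\adddelta\test(\sit))\leq\frac{1}{2}\cdot0+\frac{1}{2}(-2^{-\abs{\sit}-2})<0$, and still diverges with $\lim_{n\to\infty}\test(\pth^n)=+\infty$; it is thus the required recursive positive rational strict test supermartingale in $\callowable{\frcstsystem}$. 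I expect the first step to be the main obstacle: the margins must be chosen so that rounding to rationals preserves both the unboundedness and the supermartingale inequality, which couples $\processtoo(\sit)$ to the values $\processtoo(\sit x)$ through the nonlinear functional $\uex_\sit$, and this is exactly where the imprecision of $\uex_\sit$---its mere subadditivity instead of additivity---must be controlled.
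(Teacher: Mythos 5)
Your proposal is correct, and while it shares the paper's core strategy---pass to a recursive \emph{rational} supermartingale so that threshold-crossing becomes decidable, then stop at the levels $2^k$ and mix with weights $2^{-k}$---the way you distribute the work is genuinely different and in places more economical. The paper proves the statement via Proposition~\ref{prop:realrat}, whose proof applies Lemma~\ref{lemma:comprec} (which in one shot produces a recursive positive rational \emph{strict test} supermartingale within bounded distance of the given computable one, using a depth-decaying cushion $6\cdot 2^{-d(\sit)}$ and normalisation by $r_{\init,0}+6$), then the stopping construction of Lemma~\ref{lemma:limit}, and then Lemma~\ref{lemma:comprec} a \emph{second} time, because the mixture $\sum_k 2^{-k}\test^{(k)}$ is only shown there to be a computable real process (via the modulus argument of Proposition~\ref{prop:computable} and the uniform-continuity property \ref{prop:coherence:uniformcontinuity}). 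You instead (i) use a one-sided cushion $[\supermartin(\sit)+2^{-n-1},\supermartin(\sit)+2^{-n}]$ which, unlike the paper's approximation, \emph{dominates} $\supermartin$, so unboundedness along $\pth$ is preserved trivially rather than through an error bound; (ii) observe that at each situation the infinite mixture collapses to a finite sum plus a geometric tail, hence remains recursive \emph{rational}---this removes both the appeal to uniform continuity and the need for a second approximation pass; and (iii) obtain strictness, positivity and unit initial capital at the very end by mixing with a universal strict supermartingale with constant negative increments, instead of building strictness into the approximation lemma. Each route buys something: the paper's Lemma~\ref{lemma:comprec} is a reusable tool (it is invoked twice and is cited in the conclusions as the key enabler of Proposition~\ref{prop:intersection}), while your decomposition is shorter, stays rational throughout, and isolates strictness as a trivial final perturbation.

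Two points deserve tightening, though neither is a genuine gap. First, in your step 2 the claim that the increment of $\processtoo^{(k)}$ ``is either $0$ or that of $\processtoo$'' is not exact: at a child where $\processtoo$ first crosses $2^k$, the increment of $\processtoo^{(k)}$ is truncated, so it is only \emph{dominated pointwise} by $\adddelta\processtoo(\sit)$; you then need monotonicity of $\uex_\sit$ (property \ref{prop:coherence:increasingness}) to conclude $\uex_\sit(\adddelta\processtoo^{(k)}(\sit))\leq 0$---a property you list among your tools, so this is a matter of formulation, and it is exactly the two-case analysis in the paper's proof of Lemma~\ref{lemma:limit}. Second, for the supermartingale property of $\processalso$ the finite collapse must hold simultaneously at $\sit$ and at all of its children: one should pick $K'$ with $2^{K'}$ exceeding the largest value of $\processtoo$ attained at $\sit$, its ancestors and its children, so that $\adddelta\processalso(\sit)=\sum_{k=1}^{K'}2^{-k}\adddelta\processtoo^{(k)}(\sit)+2^{-K'}\adddelta\processtoo(\sit)$; this works precisely because $\posspace$ is finite, and is worth saying explicitly.
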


\noindent As a consequence, whenever we restrict Sceptic's allowed betting strategies to a set that is smaller than the one in Definition~\ref{def:randomsequences}, but larger than the one in Proposition~\ref{prop:equivalent}, we obtain a definition for computably random sequences that is equivalent to Definition~\ref{def:randomsequences}. 
Consequently, it indeed does not matter whether we restrict Sceptic's allowed betting strategies to supermartingales or strict supermartingales.

If we consider binary sequences and restrict Sceptic's betting behaviour to non-negative computable test supermartingales, our definition of computable randomness coincides with the one that was recently introduced by De Cooman and De Bock for binary sequences \cite{CoomanBock2017}. 
The equivalence is not immediate though because the forecasting systems in Ref.~\cite{CoomanBock2017} specify probability intervals rather than coherent lower expectations. 
Nevertheless, it does hold because in the binary case, for every coherent lower expectation, the corresponding closed convex set of probability mass functions on $\posspace=\{0,1\}$---see Section 2---is completely characterised by the associated probability interval for the outcome $1$. 
Furthermore, in the case of binary sequences and stationary, precise, computable forecasting systems, it can also be shown that our definition of computable randomness coincides with the classical notion of computable randomness w.r.t. computable probability mass functions \cite{Rute2012}. 

Next, we inspect some properties of computably random sequences $\pth$ and the set of forecasting systems $\frcstsystemscomp$ for which $\pth$ is computably random. 
We start by establishing that for every forecasting system $\frcstsystem$, there is at least one path $\pth \in \pths$ that is computably random for $\frcstsystem$.

\begin{proposition}\label{prop:existence}
For every forecasting system $\frcstsystem$, there is at least one path $\pth$ such that $\frcstsystem \in \frcstsystemscomp$.
\end{proposition}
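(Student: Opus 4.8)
The plan is to reduce this imprecise, game-theoretic existence statement to a classical measure-theoretic fact, exploiting that $\callowable{\frcstsystem}$ is countable. The idea is that a single \emph{precise} forecast chosen in every situation turns every imprecise supermartingale into an ordinary supermartingale for one fixed probability measure, after which standard martingale theory kills all countably many computable bets at once.

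First I would choose, for every situation $\sit \in \sits$, a probability mass function $\pp[\sit] \in \crdlset_\sit$ from the (non-empty, closed, convex) credal set $\crdlset_\sit$ representing $\lex_\sit$, so that $\lex_\sit(\gamble) \leq \ex_{\pp[\sit]}(\gamble) \leq \uex_\sit(\gamble)$ for every $\gamble \in \gambles$; such a selection exists by the lower-envelope theorem recalled in Section~\ref{sec:singleforecast}. The family $\set{\pp[\sit]}_{\sit \in \sits}$ is a precise probability tree and, by the usual Ionescu--Tulcea construction, induces a probability measure $\pr$ on $\pths$ for which the probability that $\pth^n=\xvalto[n]$ equals $\prod_{k=0}^{n-1}\pp[\xvalto[k]](\xval[k+1])$.

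Next I would observe that every non-negative $\frcstsystem$-supermartingale is automatically a non-negative supermartingale for $\pr$ in the classical sense. Indeed, in any situation $\sit$ the conditional expectation of the increment satisfies $\ex_{\pp[\sit]}(\adddelta\supermartin(\sit)) \leq \uex_\sit(\adddelta\supermartin(\sit)) \leq 0$, so $\sum_{x \in \posspace}\pp[\sit](x)\supermartin(\sit x) \leq \supermartin(\sit)$, which is exactly the $\pr$-supermartingale property. By Doob's convergence theorem, a non-negative $\pr$-supermartingale converges $\pr$-almost surely to a finite limit and is hence $\pr$-almost surely bounded; so for each fixed $\supermartin \in \callowable{\frcstsystem}$ the event $\set{\pth \in \pths \colon \sup_{n}\supermartin(\pth^n)=+\infty}$, measurable as $\bigcap_m\bigcup_n\set{\pth\colon\supermartin(\pth^n)>m}$, is $\pr$-null. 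Since $\callowable{\frcstsystem}$ is countable (the set of algorithms is countable, as recalled in Section~\ref{sec:computability}), the union of these null events over all $\supermartin \in \callowable{\frcstsystem}$ is again $\pr$-null. Its complement---the set of paths that are computably random for $\frcstsystem$---therefore has $\pr$-measure one, so in particular it is non-empty, yielding a path $\pth$ with $\frcstsystem \in \frcstsystemscomp$.

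The load-bearing step, and the one I would flag as the main obstacle, is the reduction in the second and third paragraphs: recognising that one precise selection $\set{\pp[\sit]}$ converts \emph{every} imprecise supermartingale simultaneously into a $\pr$-supermartingale, so that the entire countable family can be dominated by a single classical measure. The remaining ingredients---existence of the selection, the tree construction of $\pr$, measurability of the ``unbounded'' events, and the convergence theorem---are standard. It is worth stating explicitly that we only need \emph{existence} of the path, so neither effectivity nor measurability of the selection itself is required; a purely game-theoretic argument avoiding $\pr$ is also possible but less economical here.
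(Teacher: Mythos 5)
Your proof is correct, but it takes a genuinely different route from the paper's. You step outside the game-theoretic framework: choosing a precise $\pp[\sit]\in\crdlset_\sit$ in every situation and gluing these choices into a $\sigma$-additive law $\pr$ on $\pths$ turns every $\supermartin\in\callowable{\frcstsystem}$ into a classical non-negative $\pr$-supermartingale (because $\ex_{\pp[\sit]}(\adddelta\supermartin(\sit))\leq\uex_\sit(\adddelta\supermartin(\sit))\leq0$), after which Doob's convergence theorem---or, even more cheaply, Ville's maximal inequality $\pr(\sup_{n}\supermartin(\pth^n)\geq\lambda)\leq\supermartin(\init)/\lambda$---together with countable additivity over the countably many elements of $\callowable{\frcstsystem}$ finishes the job. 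The paper instead stays entirely inside the imprecise framework: it enumerates the countably many recursive positive rational strict test supermartingales $\test^{(i)}$ (by Proposition~\ref{prop:equivalent}, controlling these suffices), forms the combination $\test'=\sum_{i\in\naturals}2^{-i}\test^{(i)}$, which is well defined with $\test'(\init)=1$ precisely because each $\test^{(i)}$ starts at $1$---this normalisation is why the reduction to test supermartingales is needed---then shows from the coherence axioms alone that every situation $\sit$ with $\test'(\sit)$ finite admits a successor $\sit x$ with $\test'(\sit x)\leq\test'(\sit)$, and greedily constructs a path along which $\test'\leq1$, hence $\test^{(i)}\leq2^{i}$ for every $i$. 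Your route buys brevity (given standard martingale theory), works directly with Definition~\ref{def:randomsequences} without needing Proposition~\ref{prop:equivalent}, and proves something stronger: the computably random paths form a $\pr$-measure-one set for every precise $\pr$ compatible with $\frcstsystem$. The paper's route buys self-containedness---no credal-set selection, no Ionescu--Tulcea extension, no Doob, only the coherence axioms and results proved in the paper---and produces the path by an explicit greedy construction rather than as an unspecified point of a measure-one set. Both arguments ultimately rest on the same observation that the relevant family of betting strategies is countable.
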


\noindent Consider now the vacuous forecasting system $\frcstsystem_{,v}$ defined by \mbox{$\lex_{\sit,v} \coloneq \lex_v$} for every $\sit \in \sits$. Our next result shows that the set of forecasting systems $\frcstsystemscomp$ for which $\pth$ is computably random is always non-empty, as it is guaranteed to contain this vacuous forecasting system.

\begin{proposition}\label{prop:VacuousPhi}
All paths are computably random for the vacuous forecasting system: $\frcstsystem_{,v} \in \frcstsystemscomp$ for all $\pth \in \pths$.
\end{proposition}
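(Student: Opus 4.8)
The plan is to prove something slightly stronger than what is asked: for the vacuous forecasting system $\frcstsystem_{,v}$, \emph{every} non-negative supermartingale---computable or not---is non-increasing along every path, and hence bounded along it. Since Definition~\ref{def:randomsequences} only requires that no \emph{computable} non-negative supermartingale in $\callowable{\frcstsystem_{,v}}$ be unbounded along $\pth$, this stronger statement yields the result at once, with computability playing no role whatsoever. So the whole argument reduces to a monotonicity observation about supermartingales for the vacuous model.

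First I would compute the conjugate upper expectation of the vacuous lower expectation. By definition $\uex(\gamble)=-\lex(-\gamble)$, so for every situation $\sit \in \sits$ and every gamble $\gamble \in \gambles$ we obtain $\uex_{\sit,v}(\gamble) = -\lex_v(-\gamble) = -\min(-\gamble) = \max\gamble$. I would then feed the process difference into this. Writing out the supermartingale inequality $\uex_{\sit,v}(\adddelta\supermartin(\sit)) \leq 0$ gives $\max_{x \in \posspace}\bigl(\supermartin(\sit x) - \supermartin(\sit)\bigr) \leq 0$, which is equivalent to $\supermartin(\sit x) \leq \supermartin(\sit)$ for every $\sit \in \sits$ and every $x \in \posspace$. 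In words: for the vacuous model, Sceptic's capital can never increase when moving from any situation to any of its children.

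Iterating this child-wise inequality along a path $\pth$ then gives $\supermartin(\pth^n) \leq \supermartin(\pth^{n-1}) \leq \dots \leq \supermartin(\pth^0) = \supermartin(\init)$ for all $n \in \naturalswithzero$. Combining this with the assumed non-negativity of $\supermartin$ yields $0 \leq \supermartin(\pth^n) \leq \supermartin(\init)$, so $\supermartin$ is bounded along $\pth$ and in particular $\sup_{n \in \naturalswithzero}\supermartin(\pth^n) < +\infty$. Hence no non-negative supermartingale for $\frcstsystem_{,v}$---and a fortiori no computable one in $\callowable{\frcstsystem_{,v}}$---can be unbounded along $\pth$, so by Definition~\ref{def:randomsequences} the path $\pth$ is computably random for $\frcstsystem_{,v}$. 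Since $\pth \in \pths$ was arbitrary, $\frcstsystem_{,v} \in \frcstsystemscomp$ for all $\pth$.

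I do not expect any genuine obstacle here: the entire content is the single observation that the vacuous model leaves Sceptic no favourable bet, so her capital can only shrink. The only points demanding a little care are the conjugacy computation $\uex_{\sit,v}(\gamble)=\max\gamble$ and the recognition that the pointwise inequality $\max_{x}(\supermartin(\sit x)-\supermartin(\sit)) \leq 0$ forces monotone non-increase along every branch; once these are in place, the bound $\supermartin(\pth^n) \leq \supermartin(\init)$ and the conclusion are immediate.
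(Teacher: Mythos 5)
Your proposal is correct and follows essentially the same route as the paper's own proof: both hinge on the observation that for the vacuous model $\uex_{\sit,v}(\adddelta\supermartin(\sit))=\max\adddelta\supermartin(\sit)\leq 0$, so every non-negative supermartingale is non-increasing along any path and hence bounded above by $\supermartin(\init)$, after which Definition~\ref{def:randomsequences} gives the conclusion. The paper phrases the monotonicity via the telescoping sum $\supermartin(\pth^n)=\supermartin(\init)+\sum_{k=0}^{n-1}\adddelta\supermartin(\pth^k)(\pthat{k+1})\leq\supermartin(\init)$ rather than by iterating the child-wise inequality, but this is only a cosmetic difference.
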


\noindent Furthermore, if a path $\pth$ is computably random for a forecasting system $\frcstsystem$, then it is also computably random for every forecasting system that is more conservative.

\begin{proposition}
\label{prop:monotonicity}
If $\pth$ is computably random for a forecasting system $\frcstsystem$, i.e., if $\frcstsystem \in \frcstsystemscomp$, then $\pth$ is also computably random for any forecasting system $\lex'_\bullet$ for which $\lex'_\bullet \leq \frcstsystem$, meaning that $\lex'_\sit(\gamble) \leq \lex_\sit(\gamble)$ for all situations $\sit \in \sits$ and gambles $\gamble \in \gambles$. 
\end{proposition}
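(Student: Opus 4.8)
The plan is to reduce the statement to a simple containment between the two classes of allowed betting strategies, namely $\callowable{\lex'_\bullet} \subseteq \callowable{\frcstsystem}$, after which the result follows immediately from Definition~\ref{def:randomsequences}. The key observation is that passing from lower to upper expectations reverses the pointwise ordering. Concretely, writing $\uex_\sit$ and $\uex'_\sit$ for the upper expectations conjugate to $\lex_\sit$ and $\lex'_\sit$, the assumption $\lex'_\sit(\gamble) \leq \lex_\sit(\gamble)$ for all $\sit \in \sits$ and $\gamble \in \gambles$ yields, applied to $-\gamble$, the chain $\uex'_\sit(\gamble) = -\lex'_\sit(-\gamble) \geq -\lex_\sit(-\gamble) = \uex_\sit(\gamble)$. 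Hence $\uex_\sit \leq \uex'_\sit$ pointwise on $\gambles$, for every $\sit \in \sits$.

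Next I would use this to compare the two supermartingale conditions. A real process $\supermartin$ is a supermartingale for $\lex'_\bullet$ precisely when $\uex'_\sit(\adddelta\supermartin(\sit)) \leq 0$ for every $\sit \in \sits$; but then the pointwise inequality gives $\uex_\sit(\adddelta\supermartin(\sit)) \leq \uex'_\sit(\adddelta\supermartin(\sit)) \leq 0$ for every $\sit \in \sits$, so $\supermartin$ is also a supermartingale for $\frcstsystem$. Since non-negativity and computability are intrinsic properties of the process $\supermartin$ regarded as a map $\sits \to \reals$, and do not depend on the underlying forecasting system, this shows that every computable non-negative supermartingale for $\lex'_\bullet$ is also one for $\frcstsystem$; that is, $\callowable{\lex'_\bullet} \subseteq \callowable{\frcstsystem}$, the inclusion being witnessed by the identity map on processes.

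Finally I would conclude by contraposition. Suppose $\pth$ is not computably random for $\lex'_\bullet$. Then by Definition~\ref{def:randomsequences} there is some $\supermartin \in \callowable{\lex'_\bullet}$ that is unbounded along $\pth$, i.e. $\sup_{n \in \naturalswithzero}\supermartin(\pth^n) = +\infty$. By the containment just established, this same $\supermartin$ lies in $\callowable{\frcstsystem}$ and is unbounded along $\pth$, contradicting the assumption $\frcstsystem \in \frcstsystemscomp$. Hence $\pth$ is computably random for $\lex'_\bullet$, i.e. $\lex'_\bullet \in \frcstsystemscomp$.

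I do not expect any genuine obstacle here: the argument amounts to the conjugacy identity together with a one-line class inclusion. The only point that requires care is the direction of the inequality when moving between lower and upper expectations---the more conservative lower expectation $\lex'_\bullet$ produces the \emph{larger} upper expectation, hence the \emph{smaller} and more restrictive set of available bets, which is exactly why fewer supermartingales, and therefore more computably random paths, arise. No computability subtleties enter, precisely because the inclusion $\callowable{\lex'_\bullet} \subseteq \callowable{\frcstsystem}$ does not alter the processes involved.
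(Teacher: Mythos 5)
Your proof is correct and follows essentially the same route as the paper's: conjugacy reverses the ordering to give $\uex_\sit \leq \uex'_\sit$, whence every computable non-negative supermartingale for $\lex'_\bullet$ is one for $\frcstsystem$, i.e.\ $\callowable{\lex'_\bullet} \subseteq \callowable{\frcstsystem}$, and the conclusion follows from Definition~\ref{def:randomsequences}. The only cosmetic difference is that you phrase the last step as a contraposition, while the paper states it directly; the mathematical content is identical.
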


The following result establishes an abstract generalisation of frequency stabilisation, on which early notions of randomness---like Church randomness---were focused \cite{ambosspies2000}. 
It states that if we systematically buy a gamble $\gamble$ for its coherent lower expectation $\lex(\gamble)$, then in the long run we will not lose any money. 
The connection with frequency stabilisation will become apparent further on in Section~\ref{sec:stat}, where we present an intuitive corollary that deals with running averages of a gamble $\gamble$ along the infinite sequence $\pth$ and its computable infinite subsequences.

\begin{theorem}
\label{theor:ChurchRandomness}
Consider a computable gamble $\gamble$, a forecasting system $\frcstsystem$ for which $\frcstsystem(\gamble)$ is a computable real process, a path $\pth= (\xvaltolong,\dots) \in \pths$ that is computably random for $\frcstsystem$,  and a recursive selection process $\selection: \sits \to \selections$ for which $\lim_{n \to + \infty} \sum_{k=0}^n \selection(\xvaltok) = + \infty$. 
Then
\[
\liminf_{n \to + \infty} \frac{\sum_{k=0}^{n-1}\selection(\xvaltok) \big[ \gamble(\xvalnextsum)-\lex_{\xvaltok}(\gamble) \big]}{\sum_{k=0}^{n-1}\selection(\xvaltok)} \geq 0. 
\]
\end{theorem}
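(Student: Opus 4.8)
The plan is to argue by contradiction, exhibiting a multiplicative test supermartingale that Sceptic can run to get infinitely rich whenever the stated $\liminf$ is negative. Write $B\coloneq\max\gamble-\min\gamble$ and assume $B>0$ (if $B=0$ then $\gamble$ is constant, $\gamble-\lex_\sit(\gamble)\equiv0$, and the claim is trivial). Suppose the displayed $\liminf$ is strictly negative. Since the denominators $T_n\coloneq\sum_{k=0}^{n-1}\selection(\xvaltok)$ diverge to $+\infty$, there is then a rational $\epsilon>0$ and infinitely many $n$ for which the numerators $S_n\coloneq\sum_{k=0}^{n-1}\selection(\xvaltok)[\gamble(\xvalnextsum)-\lex_{\xvaltok}(\gamble)]$ satisfy $S_n\leq-\epsilon T_n$. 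The betting idea is that on each selected round Sceptic takes Forecaster's side, i.e.\ she sells $\gamble$ at the lower price $\lex_\sit(\gamble)$ by using the gamble $\lex_\sit(\gamble)-\gamble$; this is allowed because, by conjugacy and constant additivity, $\uex_\sit(\lex_\sit(\gamble)-\gamble)=-\lex_\sit(\gamble-\lex_\sit(\gamble))=0\leq0$.

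Next I would fix a rational $\alpha$ with $0<\alpha<\min\{1/(2B),\,\epsilon/B^2\}$ and define a real process $\supermartin$ by $\supermartin(\init)\coloneq1$ and the multiplicative recursion $\adddelta\supermartin(\sit)\coloneq\supermartin(\sit)\,\alpha\,\selection(\sit)\,[\lex_\sit(\gamble)-\gamble]$, so that $\supermartin(\sit x)=\supermartin(\sit)\big(1+\alpha\selection(\sit)[\lex_\sit(\gamble)-\gamble(x)]\big)$. Because $\lex_\sit(\gamble)\in[\min\gamble,\max\gamble]$ we have $\abs{\lex_\sit(\gamble)-\gamble(x)}\leq B$, so every factor lies in $[1-\alpha B,1+\alpha B]\subseteq[\nicefrac12,\nicefrac32]$ and $\supermartin$ is strictly positive, hence non-negative. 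It is a supermartingale because $\supermartin(\sit)\alpha\selection(\sit)\geq0$ and the non-negative homogeneity of $\uex_\sit$ (from \ref{axiom:coherence:homogeneity} via conjugacy) give $\uex_\sit(\adddelta\supermartin(\sit))=\supermartin(\sit)\alpha\selection(\sit)\,\uex_\sit(\lex_\sit(\gamble)-\gamble)=0$. Finally $\supermartin$ is computable: it is a recursive product of factors assembled from the computable gamble $\gamble$, the computable real process $\sit\mapsto\lex_\sit(\gamble)$, the recursive $\selection$ and the rational $\alpha$; since these factors are uniformly bounded and the product at $\sit$ has known length $\abs{\sit}$, one can propagate a uniform modulus of convergence into a recursive net witnessing computability. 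Thus $\supermartin\in\callowable{\frcstsystem}$.

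The analytic core is then a logarithmic estimate along $\pth$. Using $\ln(1+u)\geq u-u^2$ for $\abs u\leq\nicefrac12$ (applicable since $\abs{\alpha\selection(\xvaltok)[\lex_{\xvaltok}(\gamble)-\gamble(\xvalnextsum)]}\leq\alpha B\leq\nicefrac12$), together with $\selection(\xvaltok)^2=\selection(\xvaltok)$ and $[\lex_{\xvaltok}(\gamble)-\gamble(\xvalnextsum)]^2\leq B^2$, I obtain
\[
\ln\supermartin(\pth^n)=\sum_{k=0}^{n-1}\ln\big(1+\alpha\selection(\xvaltok)[\lex_{\xvaltok}(\gamble)-\gamble(\xvalnextsum)]\big)\geq-\alpha S_n-\alpha^2B^2T_n.
\]
Along the infinitely many $n$ with $S_n\leq-\epsilon T_n$ this is bounded below by $\alpha(\epsilon-\alpha B^2)T_n$, whose coefficient is positive by the choice $\alpha<\epsilon/B^2$. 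Since $T_n\to+\infty$, we get $\supermartin(\pth^n)\to+\infty$ along that subsequence, so $\sup_{n}\supermartin(\pth^n)=+\infty$ and $\supermartin$ is unbounded along $\pth$. This contradicts the computable randomness of $\pth$ for $\frcstsystem$, so the $\liminf$ must be $\geq0$.

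The main obstacle is the single, fixed choice of the rational multiplier $\alpha$: it must be small enough to keep the multiplicative capital non-negative ($\alpha\leq1/(2B)$), small enough that the quadratic error $\alpha^2B^2T_n$ is dominated by the linear gain $\alpha\epsilon T_n$ ($\alpha<\epsilon/B^2$), and rational and fixed so that $\supermartin$ remains computable. I would stress that $\epsilon$ need not be computed effectively: the contradiction only requires the \emph{existence} of one unbounded computable supermartingale. A second point to handle carefully is that an average dipping below $-\epsilon$ merely \emph{infinitely often} already suffices, precisely because the divergence of $T_n$ turns those dips into genuine divergence of the partial sums $S_n\to-\infty$, and hence of $\ln\supermartin(\pth^n)\to+\infty$, along a subsequence.
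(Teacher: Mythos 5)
Your proof is correct and follows essentially the same route as the paper's: your multiplicative process $\supermartin(\sit x)=\supermartin(\sit)\bigl(1+\alpha\selection(\sit)[\lex_\sit(\gamble)-\gamble(x)]\bigr)$ is exactly the supermartingale $\process_\submartin$ of the paper's Lemma~\ref{lem:martinwlln} (whose choice $\xi=\epsilon/(2B^2)$ satisfies your constraints $\xi<1/(2B)$ and $\xi<\epsilon/B^2$), and you invoke the same inequality $\ln(1+y)\geq y-y^2$ for $y>-\nicefrac{1}{2}$ to turn infinitely many dips of the selected average below $-\epsilon$ into unboundedness of a computable non-negative supermartingale, contradicting Definition~\ref{def:randomsequences}. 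The only difference is organizational: you inline the lemma and sketch the computability of the finite product (bounded factors, length $\abs{\sit}$, propagated modulus), which is precisely what the paper establishes in detail as Proposition~\ref{prop:computable:from:multiplier}.
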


\section{Computable randomness for lower expectations}
\label{sec:stat}

We now introduce a simplified notion of imprecise computable randomness with respect to a single coherent lower expectation; a direct generalisation of computable randomness with respect to a probability mass function. 
We achieve this by simply constraining our attention to stationary forecasting systems: forecasting systems~$\frcstsystem$ that assign the same lower expectation $\lex$ to each situation $\sit \in \sits$. 
In what follows, we will call $\pth$ computably random for a coherent lower expectation~$\lex$ if it is computably random with respect to the corresponding stationary forecasting system. We denote the set of all coherent lower expectations for which $\pth$ is computably random by $\frcstsystemscompstat$.

Since computable randomness for coherent lower expectations is a special case of computable randomness for forecasting systems, the results we obtained before carry over to this simplified setting. 
First, every coherent lower expectation has at least one computably random path. 

\begin{corollary} \label{cor:existence}
For every coherent lower expectation $\lex$, there is at least one path~$\pth$ such that $\lex \in \frcstsystemscompstat$.
\end{corollary}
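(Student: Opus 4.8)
The plan is to obtain this corollary as an immediate specialisation of Proposition~\ref{prop:existence} to stationary forecasting systems. First I would associate with the given coherent lower expectation $\lex$ its corresponding stationary forecasting system $\frcstsystem$, defined by $\lex_\sit \coloneq \lex$ for every situation $\sit \in \sits$. Since $\lex \in \lexs$ is a coherent lower expectation by assumption, this $\frcstsystem$ is a genuine forecasting system in the sense of Definition~\ref{def:ForecastingSystem}, so Proposition~\ref{prop:existence} applies to it without any further work.

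Next I would invoke Proposition~\ref{prop:existence} for this particular $\frcstsystem$, which guarantees the existence of at least one path $\pth$ for which $\frcstsystem \in \frcstsystemscomp$; that is, $\pth$ is computably random for the stationary forecasting system $\frcstsystem$. By the convention adopted at the beginning of Section~\ref{sec:stat}---namely that $\pth$ is said to be computably random for a coherent lower expectation $\lex$ exactly when it is computably random for the associated stationary forecasting system---this immediately yields $\lex \in \frcstsystemscompstat$, which is precisely the claim.

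I do not expect any genuine obstacle here: the argument is purely a matter of unwinding definitions. The only points to verify are the trivial observations that the constant assignment $\sit \mapsto \lex$ is a valid element of $\frcstsystems$, and that the notion of computable randomness for a coherent lower expectation coincides \emph{by construction} with that for its stationary forecasting system. All of the substantive content resides in Proposition~\ref{prop:existence}, which this corollary merely restates in the stationary special case.
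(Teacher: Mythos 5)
Your proof is correct and follows exactly the paper's own argument: the paper likewise obtains this corollary as an immediate consequence of Proposition~\ref{prop:existence} applied to the stationary forecasting system associated with $\lex$, combined with the Section~\ref{sec:stat} convention for computable randomness with respect to a coherent lower expectation. Your version merely spells out the definitional unwinding that the paper leaves implicit.
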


\noindent Secondly, $\frcstsystemscompstat$ is non-empty as every path $\pth$ is computably random for the vacuous coherent lower expectation $\lex_v$.

\begin{corollary} \label{cor:VacuousE}
All paths are computably random for the vacuous coherent lower expectation: $\lex_v \in \frcstsystemscompstat$ for all $\pth \in \pths$.
\end{corollary}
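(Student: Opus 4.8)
The plan is to derive this directly from Proposition~\ref{prop:VacuousPhi} by unwinding the definition of computable randomness for a coherent lower expectation. By the convention introduced at the start of Section~\ref{sec:stat}, a path $\pth$ is computably random for a coherent lower expectation $\lex$ precisely when it is computably random for the associated stationary forecasting system---the one that assigns $\lex$ to every situation. Applying this with $\lex=\lex_v$, the associated stationary forecasting system is exactly the vacuous forecasting system $\frcstsystem_{,v}$, since by definition $\lex_{\sit,v} \coloneq \lex_v$ for every $\sit \in \sits$. Hence $\lex_v \in \frcstsystemscompstat$ if and only if $\frcstsystem_{,v} \in \frcstsystemscomp$, and the latter holds for every $\pth \in \pths$ by Proposition~\ref{prop:VacuousPhi}. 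This yields the claim.

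Since the corollary is an immediate specialisation of a result already proved, I do not expect any genuine obstacle. For completeness, though, it is worth recalling the mechanism that makes Proposition~\ref{prop:VacuousPhi} true, as the same reasoning gives a self-contained argument in the stationary case. For the vacuous model, the conjugate upper expectation is $\uex_v(\gamble)=-\lex_v(-\gamble)=-\min(-\gamble)=\max\gamble$. Thus a real process $\supermartin$ is a supermartingale for $\frcstsystem_{,v}$ exactly when $\uex_v(\adddelta\supermartin(\sit))=\max_{x \in \posspace}\adddelta\supermartin(\sit)(x) \leq 0$ for every $\sit \in \sits$, i.e. when $\supermartin(\sit x) \leq \supermartin(\sit)$ for all $\sit \in \sits$ and $x \in \posspace$. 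Every such $\supermartin$ is therefore non-increasing along every path, so $\supermartin(\pth^n) \leq \supermartin(\init)$ for all $n \in \naturalswithzero$ and $\supermartin$ is bounded along $\pth$. In particular, no supermartingale---computable or not---can be unbounded along any path, which is exactly the condition in Definition~\ref{def:randomsequences} for every path to be computably random for $\frcstsystem_{,v}$.

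The only point that requires a moment's care is the identification of the stationary forecasting system attached to $\lex_v$ with the vacuous forecasting system $\frcstsystem_{,v}$, but this is immediate from the two definitions. Consequently the shortest route is the one-line appeal to Proposition~\ref{prop:VacuousPhi}, with the monotonicity observation above serving as a transparent explanation of why that proposition holds in the vacuous case.
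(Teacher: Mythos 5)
Your proof is correct and takes essentially the same route as the paper, which likewise derives Corollary~\ref{cor:VacuousE} as an immediate consequence of Proposition~\ref{prop:VacuousPhi}, the definition of computable randomness with respect to a stationary forecasting system, and the definition of the vacuous forecasting system. Your supplementary observation that vacuous supermartingales are non-increasing along every path is exactly the paper's own proof of Proposition~\ref{prop:VacuousPhi}, so it is a harmless (and accurate) recapitulation rather than a new ingredient.
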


\noindent Thirdly, if a path $\pth$ is computably random for a coherent lower expectation $\lex \in \frcstsystemscompstat$, then it is also computably random for any coherent lower expectation $\lex'$ that is more conservative.

\begin{corollary}
\label{cor:monotonicityStationary}
If $\pth$ is computably random for a coherent lower expectation $\lex$, then it is also computably random for any coherent lower expectation $\lex'$ for which $\lex' \leq \lex$, meaning that $\lex'(\gamble) \leq \lex(\gamble)$ for every gamble $\gamble \in \gambles$.
\end{corollary}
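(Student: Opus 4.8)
The plan is to obtain this as an immediate specialisation of Proposition~\ref{prop:monotonicity}, since computable randomness for a coherent lower expectation is by definition computable randomness for the associated stationary forecasting system. First I would fix the two stationary forecasting systems $\frcstsystem$ and $\lex'_\bullet$ determined by $\lex$ and $\lex'$, that is, $\lex_\sit \coloneqq \lex$ and $\lex'_\sit \coloneqq \lex'$ for every $\sit \in \sits$. By assumption $\pth$ is computably random for $\lex$, which by definition means $\frcstsystem \in \frcstsystemscomp$.

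The only thing left to verify is that the pointwise domination $\lex' \leq \lex$ lifts to domination of the corresponding forecasting systems, and this is immediate: for every situation $\sit \in \sits$ and every gamble $\gamble \in \gambles$ we have $\lex'_\sit(\gamble) = \lex'(\gamble) \leq \lex(\gamble) = \lex_\sit(\gamble)$, so $\lex'_\bullet \leq \frcstsystem$ in the sense required by Proposition~\ref{prop:monotonicity}. Applying that proposition then yields $\lex'_\bullet \in \frcstsystemscomp$, i.e.\ $\pth$ is computably random for the stationary forecasting system associated with $\lex'$, which is exactly the statement that $\pth$ is computably random for $\lex'$, that is, $\lex' \in \frcstsystemscompstat$.

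There is essentially no obstacle here, as the corollary is a verbatim restriction of Proposition~\ref{prop:monotonicity} to stationary forecasting systems; the only content is the trivial observation that the constant map $\sit \mapsto \lex'$ dominates $\sit \mapsto \lex$ whenever $\lex' \leq \lex$. If one preferred a self-contained argument rather than citing the proposition, I would instead note that $\lex' \leq \lex$ implies $\uex \leq \uex'$ by conjugacy, so every gamble that is available to Sceptic under $\lex'$ (those $\gamble$ with $\uex'(\gamble) \leq 0$) is also available under $\lex$; hence every computable non-negative supermartingale for $\lex'_\bullet$ is also one for $\frcstsystem$, giving $\callowable{\lex'_\bullet} \subseteq \callowable{\frcstsystem}$, and the absence of a strategy in this larger set that is unbounded along $\pth$ forces its absence in the smaller set.
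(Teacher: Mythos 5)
Your proposal is correct and follows exactly the paper's own route: the paper proves this corollary as an immediate consequence of Proposition~\ref{prop:monotonicity} together with the definition of computable randomness for a stationary forecasting system, which is precisely your argument (and your self-contained alternative simply re-runs the proof of Proposition~\ref{prop:monotonicity} in the stationary case). The only cosmetic slip is the phrase ``the constant map $\sit \mapsto \lex'$ dominates $\sit \mapsto \lex$''---the direction of domination is the other way around---but the displayed inequality $\lex'_\sit(\gamble) \leq \lex_\sit(\gamble)$ is the correct one, so the argument stands.
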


\noindent And finally, for coherent lower expectations, Theorem~\ref{theor:ChurchRandomness} turns into a property about running averages. 
In particular, it provides bounds on the limit inferior and superior of the running average of a gamble $\gamble$ along the infinite sequence $\pth$ and its computable infinite subsequences. 
Please note that unlike in Theorem~\ref{theor:ChurchRandomness}, we need not impose computability on the gamble $\gamble$ nor on the real number $\lex(\gamble)$.

\begin{corollary}
\label{cor:ChurchRandomnessStationary}
Consider a path $\pth \allowbreak = \allowbreak (\xvaltolong,\allowbreak \dots) \in \pths$, a coherent lower expectation $\lex \in \frcstsystemscompstat$, a gamble $\gamble$ and a recursive selection process $\selection$ for which $\lim_{n \to + \infty}\sum_{k=0}^n \selection(\xvaltok) = + \infty$. 
Then
\begin{align*}
\lex(\gamble) \leq \liminf_{n \to +\infty} \frac{\sum_{k=0}^{n-1}\selection(\xvaltok) \gamble(\xvalnextsum)}{\sum_{k=0}^{n-1}\selection(\xvaltok)} \leq \limsup_{n \to +\infty} \frac{\sum_{k=0}^{n-1}\selection(\xvaltok) \gamble(\xvalnextsum)}{\sum_{k=0}^{n-1}\selection(\xvaltok)} \leq \uex(\gamble).
\end{align*}
\end{corollary}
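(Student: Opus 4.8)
The plan is to derive the corollary from Theorem~\ref{theor:ChurchRandomness} by specialising to stationary forecasting systems, and then to remove the two computability hypotheses (on $\gamble$ and on $\lex(\gamble)$) by an approximation argument that leans on Corollary~\ref{cor:monotonicityStationary}. First I would reduce the chain of inequalities to its leftmost piece. The middle inequality $\liminf\le\limsup$ is automatic, and the rightmost inequality follows from the leftmost one applied to $-\gamble$: since the selection-weighted running average of $-\gamble$ is minus that of $\gamble$, a bound $\lex(-\gamble)\le\liminf(\cdots)$ turns, via conjugacy $\lex(-\gamble)=-\uex(\gamble)$, into $\limsup(\cdots)\le\uex(\gamble)$. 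So it suffices to prove $\lex(\gamble)\le\liminf_n[\text{weighted average of }\gamble]$, and for this it is enough to show $q\le\liminf_n[\cdots]$ for every rational $q<\lex(\gamble)$.

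Fix such a $q$. The key construction handles both missing computability assumptions at once. Because every coherent lower expectation satisfies $\min(\cdot)\le\lex(\cdot)\le\max(\cdot)$ and is $1$-Lipschitz for the supremum norm (both consequences of \ref{axiom:coherence:bounds}--\ref{axiom:coherence:subadditivity} together with conjugacy), I can pick a rational gamble $h\in\gambles$ with $h\le\gamble$ pointwise and $\norm{\gamble-h}_\infty<\lex(\gamble)-q$, so that $\lex(h)>q$ and hence $h-q$ is acceptable for $\lex$. I then let $\lex''$ be the natural extension of this single acceptable gamble, $\lex''(\gamble'):=\sup_{\lambda\ge0}\min_{x\in\posspace}[\gamble'(x)-\lambda(h(x)-q)]$ for all $\gamble'\in\gambles$. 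Since $\max h>q$ the assessment avoids sure loss, so $\lex''$ is a coherent lower expectation; because $h-q$ is acceptable for the coherent $\lex$ one has $\lex''\le\lex$; and a direct evaluation shows $\lex''(h)\ge q$ with $\lex''(h)\in\rationals$, hence $\lex''(h)$ is a computable real. Crucially, $h$ is rational and therefore a computable gamble, and the stationary forecasting system $\frcstsystem''$ that assigns $\lex''$ to every situation satisfies that $\frcstsystem''(h)$ is the constant---hence computable---real process equal to $\lex''(h)$.

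Finally I would invoke the transfer and the theorem. By Corollary~\ref{cor:monotonicityStationary}, $\lex''\le\lex$ and $\lex\in\frcstsystemscompstat$ give that $\pth$ is computably random for $\lex''$. Applying Theorem~\ref{theor:ChurchRandomness} to the computable gamble $h$, the stationary system $\frcstsystem''$ (for which $\frcstsystem''(h)$ is computable), the path $\pth$, and the selection process $\selection$, and using that $\lex''(h)$ is constant so that the weighted average of $h-\lex''(h)$ equals the weighted average of $h$ minus $\lex''(h)$, yields $\liminf_n[\text{weighted average of }h]\ge\lex''(h)\ge q$. Since $h\le\gamble$ and the weights $\selection(\xvaltok)$ are non-negative, the weighted average of $h$ never exceeds that of $\gamble$, so $\liminf_n[\text{weighted average of }\gamble]\ge q$; taking the supremum over all rational $q<\lex(\gamble)$ gives the left inequality, and the conjugacy step above gives the right one. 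The main obstacle is exactly this decoupling of the theorem's two computability requirements: the device is to approximate the arbitrary gamble $\gamble$ from below by a rational (hence computable) gamble $h$, and simultaneously to replace $\lex$ by a dominated coherent lower expectation $\lex''$ whose value on $h$ is rational, so that both hypotheses of Theorem~\ref{theor:ChurchRandomness} hold while its conclusion is only mildly weakened (from $\lex(\gamble)$ to $q$), the gap being recovered in the limit.
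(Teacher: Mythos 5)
Your proof is correct and follows essentially the same route as the paper's: both approximate $\gamble$ from below by a computable (rational) gamble, replace $\lex$ by a dominated coherent lower expectation obtained as the natural extension of a single acceptable gamble---your $\lex''$ is exactly the paper's $\lex_{\gamma,\gamble'}$ from Proposition~\ref{prop:Ef}---transfer computable randomness via Corollary~\ref{cor:monotonicityStationary}, apply Theorem~\ref{theor:ChurchRandomness} to the computable approximating gamble, and recover the gap in the limit (your supremum over rational $q<\lex(\gamble)$ plays the role of the paper's $\epsilon\to0$). The handling of the middle inequality and of the upper bound via conjugacy applied to $-\gamble$ is also identical to the paper's.
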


When comparing our notion of imprecise computable randomness with the classical precise one, there is a striking difference. 
In the precise case, for a given path $\pth$, there may be no probability mass function $p$ for which $\pth$ is computably random (for example, when the running frequencies do not converge). 
But, if there is such a $p$, then it must be unique (because a running frequency cannot converge to two different numbers).
In the imprecise case, however, according to Corollary~\ref{cor:VacuousE} and~\ref{cor:monotonicityStationary}, every path $\pth$ is computably random for the vacuous coherent lower expectation, and if it is computably random for a coherent lower expectation $\lex$, it is also computably random for any coherent lower expectation $\lex'$ that is more conservative---or less informative---than $\lex$. 
This leads us to wonder whether for every path $\pth$, there is a least conservative---or most informative---coherent lower expectation $\lex_\pth$ such that $\pth$ is computably random for every coherent lower expectation $\lex$ that is more conservative than or equal to $\lex_\pth$, but not for any other. 
Clearly, if such a least conservative lower expectation exists, it must be given by
\[
\lex_\pth(\gamble)\coloneq\sup\{\lex(\gamble):\lex \in \frcstsystemscompstat\} \quad \textrm{for all } \gamble \in \gambles,
\]
which is the supremum value of $\lex(\gamble)$ over all coherent lower expectations $\lex$ for which $\pth$ is computably random. 
The crucial question is whether this $\lex_\pth$ is coherent (\textrm{\ref{axiom:coherence:bounds}} and \textrm{\ref{axiom:coherence:homogeneity}} are immediate, but \textrm{\ref{axiom:coherence:subadditivity}} is not) and whether $\pth$ is computably random with respect to $\lex_\pth$. 
If the answer to both questions is yes, then $\lex_\pth$ is the least conservative coherent lower expectation for which $\pth$ is computably random.

The following example illustrates that there are paths $\pth$ for which this is indeed the case. 
It also serves as a nice illustration of some of the results we have obtained so far.

\begin{example} \label{example}
Consider any set $\{p_0,\dots,p_{M-1}\}$ of \(M\) pairwise different, computable probability mass functions, and any path $\pth$ that is computably random for the non-stationary precise forecasting system $\frcstsystem$, defined by $\lex_\sit \coloneqq \ex_{p_{n\!\mod{M}}}$ for all $n\in\naturalswithzero$ and $\sit=\xvalto\in\sits$; it follows from Proposition~\ref{prop:existence} that there is at least one such path.
Then as we are about to show, $\pth$ is computably random for a coherent lower expectation $\lex'$ if and only if $\lex' \leq \lex$, with $\lex(\gamble)\coloneq\min_{k=0}^{M-1}\ex_{p_k}(\gamble)$ for all $\gamble \in \gambles$. 

The `if'-part follows by recalling Proposition~\ref{prop:monotonicity} and noticing that for all $\sit=\xvalto  \in \sits$ and all $\gamble \in \gambles$: 
\[
\lex'(\gamble) 
\leq \lex(\gamble) 
= \min\{\ex_{p_0}(\gamble),\allowbreak\dots, \allowbreak \ex_{p_{M-1}}(\gamble)\} 
\leq \ex_{p_{n\!\mod{M}}}(\gamble)
=\lex_\sit(\gamble).
\]
For the `only if'-part, consider for every $i \in \{0,\dots,M-1\}$ the selection process $\selection_i \colon \sits \to \{0,1\}$ that assumes the value $\selection_i(\xvalto)=1$ whenever $n\!\mod{m}=i$ and $0$ elsewhere. 
Clearly, these selection processes are recursive and $\lim_{n \to \infty} \allowbreak \sum_{k=0}^n \selection_i(\xvaltolongsum) = + \infty$ along the path $\pth=(\xvaltolong,\dots)$---and any other path, in fact. 
Furthermore, due to the computability of the probability mass functions $p_i$, it follows that $\frcstsystem(\gamble)$ is a computable real process for any computable gamble $\gamble \in \gambles$.
For any computable gamble $\gamble \in \gambles$, it therefore follows that
\[
\lex'(\gamble) \leq \liminf_{n \to \infty}\sum_{k=0}^{n-1} \frac{\gamble(x_{i+kM})}{n} \leq \limsup_{n \to \infty} \sum_{k=0}^{n-1} \frac{\gamble(x_{i+kM})}{n} \leq \ex_{p_i}(\gamble),
\]
where the first and third inequality follow from Corollary~\ref{cor:ChurchRandomnessStationary} and Theorem~\ref{theor:ChurchRandomness}, respectively, and the second inequality is a standard property of limits inferior and superior. 
Since (coherent lower) expectations are continuous with respect to uniform convergence \cite{walley1991}, and since every gamble on a finite set $\posspace$ can be uniformly approximated by computable gambles on $\posspace$, the same result holds for non-computable gambles as well. 
Hence, for any gamble $\gamble \in \gambles$ we find that $\lex'(\gamble) \leq \ex_{p_i}(\gamble)$. As this is true for every $i \in \{0,\dots,M-1\}$, it follows that $\lex'(\gamble) \leq \lex(\gamble)$ for all $\gamble \in \gambles$. 

Hence, $\pth$ is indeed computably random for $\lex'$ if and only if $\lex' \leq \lex$. 
Since $\lex$ is clearly coherent itself, this also implies that $\pth$ is computably random with respect to $\lex$ and---therefore--- that $\lex_\pth=\lex$. So for this particular path $\pth$, $\lex_\pth=\lex$ is the least conservative coherent lower expectation for which $\pth$ is computably random. 
\xqed{$\lozenge$}
\end{example}

However, unfortunately, there are also paths for which this is not the case. 
Indeed, as illustrated in Ref.~\cite{CoomanBock2017}, there is a binary path $\pth$---so with $\posspace=\outcomes$--- that is not computably random for $\lex_\pth$ with \smash{$\lex_\pth(\gamble)\coloneq\frac{1}{2}\sum_{x \in \outcomes}\gamble(x)$} for every gamble $\gamble \in \gambles$.

Interestingly, however, in the binary case, it has also been shown that while $\pth$ may not be computably random with respect to $\lex_\pth$, there are always coherent lower expectations $\lex$ that are infinitely close to $\lex_\pth$ and that do make $\pth$ computably random \cite{CoomanBock2017}.\footnote{This result was established in terms of probability intervals; we paraphrase it in terms of coherent lower expectations, using our terminology and notation.} 
So one could say that $\pth$ is `almost' computably random with respect to $\lex_\pth$. 
Whether a similar result continuous to hold in our more general---not necessarily binary---context is an open problem. We conjecture that the answer is yes.

Proving this conjecture is beyond the scope of the present contribution though. Instead, we will establish a similar result for expectation intervals.

\section{Computable randomness for expectation intervals}
\label{sec:intervals}

As a final specialisation of our notion of computable randomness, we now focus on a single gamble $\gamble$ on $\posspace$ and on expectation intervals $I = \sqgroup{\lex(\gamble),\uex(\gamble)}$ that correspond to lower expectations for which $\pth$ is computably random. 
We will denote the set of all closed intervals $I\subseteq[\min f,\max f]$ by $\mathcal{I}_f$.

\begin{definition} \label{def:interval}
A path $\pth$ is \emph{computably random} for a gamble $\gamble \in \gambles$ and a closed interval $I$ if there is a coherent lower expectation \smash{$\lex \in \frcstsystemscompstat$} for which $\lex(\gamble)= \min I$ and $\uex(\gamble)= \max I$. 
For every gamble $\gamble \in \gambles$, we denote the set of all closed intervals for which $\pth$ is computably random by $\mathcal{I}_\gamble(\pth)$.
\end{definition}

\noindent Note that if $\pth$ is computably random for a gamble $\gamble$ and a closed interval $I$, it must be that $I\in\mathcal{I}_f$; so $\mathcal{I}_f(\pth)\subseteq\mathcal{I}_f$. This follows directly from \textrm{\ref{axiom:coherence:bounds}} and conjugacy. 
We can also prove various properties similar to the ones in Section~\ref{sec:randomsequences} and~\ref{sec:stat}. The following result is basically a specialisation of Corollaries~\ref{cor:existence}-\ref{cor:monotonicityStationary}.

\begin{proposition} \label{prop:monotonicityInterval}
Consider any gamble $\gamble \in \gambles$. Then
\begin{enumerate}[label=\emph{(\roman*)}, leftmargin=*]
\item for every $I \in \mathcal{I}_f$, there is at least one $\pth \in \pths$ for which $I \in \mathcal{I}_\gamble(\pth)$; \label{subprop:existence}
\item for every $\pth \in \pths$, $\mathcal{I}_\gamble(\pth)$ is non-empty because $\sqgroup{\min \gamble,\max \gamble} \in \mathcal{I}_\gamble(\pth)$; \label{subprop:non-emptiness}
\item for every $\pth \in \pths$, if $I \in \mathcal{I}_\gamble(\pth)$ and $I \subseteq I'\in\mathcal{I}_f$, then also $I' \in \mathcal{I}_\gamble(\pth)$.\label{subprop:monotonicityforinterval}
\end{enumerate}
\end{proposition}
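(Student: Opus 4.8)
The plan is to derive all three items from Corollaries~\ref{cor:existence}--\ref{cor:monotonicityStationary} by repeatedly translating a condition on the expectation interval $[\lex(\gamble),\uex(\gamble)]$ into a condition on an underlying credal set. Two elementary facts do the translating. First, since $\ex_p(\gamble)=\sum_{x\in\posspace}\gamble(x)p(x)$ is a convex combination of the values $\gamble(x)$, the map $p\mapsto\ex_p(\gamble)$ takes exactly the values in $[\min\gamble,\max\gamble]$; so every number in this interval is attained by some probability mass function. Second, for any nonempty closed convex set $\crdlset$ of probability mass functions, the lower envelope $\lex(g)\coloneqq\min\{\ex_p(g):p\in\crdlset\}$ is a coherent lower expectation whose conjugate satisfies $\uex(g)=\max\{\ex_p(g):p\in\crdlset\}$, and---by compactness of $\crdlset$ and continuity of $p\mapsto\ex_p(\gamble)$---both extrema are attained. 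Together these let me realise any prescribed interval in $\mathcal{I}_\gamble$ as $[\lex(\gamble),\uex(\gamble)]$ for a suitable $\lex$.

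Item~\ref{subprop:non-emptiness} is immediate from Corollary~\ref{cor:VacuousE}: the vacuous lower expectation has $\lex_v(\gamble)=\min\gamble$ and $\uex_v(\gamble)=-\lex_v(-\gamble)=\max\gamble$, it belongs to $\frcstsystemscompstat$ for every $\pth$, so $[\min\gamble,\max\gamble]\in\mathcal{I}_\gamble(\pth)$ by Definition~\ref{def:interval}. For item~\ref{subprop:existence}, given $I\in\mathcal{I}_\gamble$ I would set $\crdlset\coloneqq\{p:\min I\leq\ex_p(\gamble)\leq\max I\}$, which is nonempty (it contains mass functions attaining each endpoint, since $\min I,\max I\in[\min\gamble,\max\gamble]$), closed and convex; its lower envelope $\lex$ is then coherent with $\lex(\gamble)=\min I$ and $\uex(\gamble)=\max I$. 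Corollary~\ref{cor:existence} supplies a path $\pth$ with $\lex\in\frcstsystemscompstat$, and Definition~\ref{def:interval} gives $I\in\mathcal{I}_\gamble(\pth)$.

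Item~\ref{subprop:monotonicityforinterval} is where Corollary~\ref{cor:monotonicityStationary} enters. Suppose $I\in\mathcal{I}_\gamble(\pth)$ is witnessed by $\lex\in\frcstsystemscompstat$ with $\lex(\gamble)=\min I$ and $\uex(\gamble)=\max I$, and write $\crdlset$ for a closed convex credal set representing $\lex$ (which exists by the representation recalled in Section~\ref{sec:singleforecast}). Given $I\subseteq I'\in\mathcal{I}_\gamble$, every $p\in\crdlset$ satisfies $\min I'\leq\min I\leq\ex_p(\gamble)\leq\max I\leq\max I'$, so $\crdlset\subseteq\crdlset'\coloneqq\{p:\min I'\leq\ex_p(\gamble)\leq\max I'\}$. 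The lower envelope $\lex''$ of $\crdlset'$ is coherent with $\lex''(\gamble)=\min I'$ and $\uex''(\gamble)=\max I'$, and enlarging the credal set lowers the envelope pointwise, so $\lex''\leq\lex$ on all gambles. Corollary~\ref{cor:monotonicityStationary} then yields $\lex''\in\frcstsystemscompstat$, whence $I'\in\mathcal{I}_\gamble(\pth)$. None of this is a genuine obstacle; the only points that deserve care are the attainment of the prescribed endpoint values (handled by compactness and continuity) and, in item~\ref{subprop:monotonicityforinterval}, making sure the inclusion $\crdlset\subseteq\crdlset'$ delivers $\lex''\leq\lex$ as \emph{functionals} on all of $\gambles$, which is precisely the hypothesis Corollary~\ref{cor:monotonicityStationary} requires.
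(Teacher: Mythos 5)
Your proof is correct, but it takes a genuinely different route from the paper's for items \emph{(i)} and \emph{(iii)}. The paper first builds, for each interval $I\in\mathcal{I}_f$, an explicit coherent lower expectation $\lex_{I,\gamble}$ via a max--min formula (Equations~\eqref{form:Ef} and~\eqref{form:Ef-f}), and proves in Propositions~\ref{prop:Ef} and~\ref{prop:Ef-f} both that $\lex_{I,\gamble}(\gamble)=\min I$, $\uex_{I,\gamble}(\gamble)=\max I$, and the dominance property that $\lex_{I,\gamble}\leq\lex'$ for any coherent $\lex'$ whose expectation interval for $\gamble$ lies in $I$; Proposition~\ref{prop:monotonicityInterval} then follows exactly as in your argument, with $\lex_{I,\gamble}$ playing the role of your envelope $\lex''$. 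You replace that algebraic machinery by the credal-set picture: you realise the prescribed interval as the lower envelope of $\crdlset'=\{p:\min I'\leq\ex_p(\gamble)\leq\max I'\}$ and obtain the dominance $\lex''\leq\lex$ from the inclusion $\crdlset\subseteq\crdlset'$ of credal sets. The two constructions in fact define the same functional (both are the most conservative coherent lower expectation with the prescribed expectation interval), so the difference is one of technique, not substance. Your route is shorter and more conceptual, at the cost of invoking the converse of the envelope statement quoted in Section~\ref{sec:singleforecast} (that the lower envelope of a nonempty closed convex, hence compact, set of mass functions is coherent with attained extrema) --- standard, and you correctly flag the attainment and the functional-wise inequality $\lex''\leq\lex$ as the points needing care. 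The paper's heavier explicit construction is not wasted effort, though: the concrete form of $\lex_{\gamma,\gamble}$ and $\lex_{I,\gamble}$ is reused later in Lemma~\ref{lemma:A} and Proposition~\ref{prop:intersection}, where envelope-level reasoning alone would not suffice.
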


Moreover, as an immediate consequence of Corollary~\ref{cor:ChurchRandomnessStationary}, if $\pth$ is computably random for a gamble $\gamble$ and a closed interval $I\in\mathcal{I}_f$, then the limit inferior and limit superior of the running averages of the gamble $\gamble$ along the path $\pth$ and its computable infinite subsequences, lie within the interval $I$. 

The properties in Proposition~\ref{prop:monotonicityInterval} lead to a similar question as the one we raised in Section~\ref{sec:stat}, but now for intervals instead of lower expectations. 
Is there, for every path $\pth$ and every gamble $\gamble \in \gambles$, a smallest interval such that $\pth$ is computably random or `almost' computably random for this gamble $\gamble$ and all intervals that contain this smallest interval, but for no other. 
The following result is the key technical step that will allow us to answer this question positively. 
It establishes that when $\pth$ is computably random for a gamble $\gamble$ and two intervals $I_1$ and $I_2$, then it is also computably random for their intersection.

\begin{proposition}\label{prop:intersection}
For any $\pth \in \pths$ and $\gamble \in \gambles$ and for any two closed intervals $I$ and $I'$ in $\mathcal{I}_f$: if $I \in \mathcal{I}_\gamble(\pth)$ and $I' \in \mathcal{I}_\gamble(\pth)$, then $I \cap I' \neq \emptyset$ and $I \cap I' \in \mathcal{I}_\gamble(\pth)$.
\end{proposition}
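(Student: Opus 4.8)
The plan is to treat the two assertions separately: non-emptiness of $I\cap I'$ is a quick consequence of Corollary~\ref{cor:ChurchRandomnessStationary}, whereas membership $I\cap I'\in\mathcal{I}_\gamble(\pth)$ requires combining two betting strategies into one.

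For the first assertion, fix coherent lower expectations $\lex_1,\lex_2\in\frcstsystemscompstat$ that witness $I\in\mathcal{I}_\gamble(\pth)$ and $I'\in\mathcal{I}_\gamble(\pth)$, so that $\lex_1(\gamble)=\min I$, $\uex_1(\gamble)=\max I$, $\lex_2(\gamble)=\min I'$ and $\uex_2(\gamble)=\max I'$. Applying Corollary~\ref{cor:ChurchRandomnessStationary} with the constant selection process $\selection\equiv1$ to each of $\lex_1$ and $\lex_2$, and abbreviating the running averages of $\gamble$ along $\pth$ (which depend only on $\pth$ and $\gamble$) by $a\coloneq\liminf_{n}\frac{1}{n}\sum_{k=0}^{n-1}\gamble(\xvalnextsum)$ and $b\coloneq\limsup_{n}\frac{1}{n}\sum_{k=0}^{n-1}\gamble(\xvalnextsum)$, I obtain $\min I\leq a\leq b\leq\max I$ and $\min I'\leq a\leq b\leq\max I'$. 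Hence $[a,b]\subseteq I\cap I'$ and, in particular, $I\cap I'\neq\emptyset$.

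For the second assertion I would first pass to maximal models. For $J\in\mathcal{I}_f$ let $\lex_J(g)\coloneq\min\{\ex_p(g):\ex_p(\gamble)\in J\}$ be the lower envelope of all mass functions whose $\gamble$-expectation lies in $J$; this is coherent, with $\lex_J(\gamble)=\min J$ and $\uex_J(\gamble)=\max J$. Since the set of mass functions associated with $\lex_1$ is contained in $\{p:\ex_p(\gamble)\in I\}$, we have $\lex_I\leq\lex_1$, so Corollary~\ref{cor:monotonicityStationary} gives that $\pth$ is computably random for $\lex_I$; likewise for $\lex_{I'}$. Because $\lex_{I\cap I'}$ witnesses $I\cap I'\in\mathcal{I}_\gamble(\pth)$, it now suffices to show that $\pth$ is computably random for $\lex_{I\cap I'}$.

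The heart of the matter is the following splitting lemma: any gamble $g$ with $\uex_{I\cap I'}(g)<0$ satisfies $\uex_I(g)<0$ or $\uex_{I'}(g)<0$. To see this, consider the concave function $\phi(t)\coloneq\max\{\ex_p(g):\ex_p(\gamble)=t\}$, for which $\uex_J(g)=\max_{t\in J}\phi(t)$; if both $\uex_I(g)\geq0$ and $\uex_{I'}(g)\geq0$, then $\phi$ is non-negative at a point of $I$ and at a point of $I'$ which, unless one of them already lies in $I\cap I'$, sit on opposite sides of $I\cap I'$, and concavity forces $\phi\geq0$ somewhere on $I\cap I'$, contradicting $\uex_{I\cap I'}(g)<0$. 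Now suppose, towards a contradiction, that $\pth$ is not computably random for $\lex_{I\cap I'}$. By Proposition~\ref{prop:equivalent} there is a recursive positive rational strict test supermartingale $\supermartin$ for (the stationary forecasting system of) $\lex_{I\cap I'}$ with $\lim_n\supermartin(\pth^n)=+\infty$, and each increment $g_\sit\coloneq\adddelta\supermartin(\sit)$ satisfies $\uex_{I\cap I'}(g_\sit)<0$. Exploiting the positivity of $\supermartin$, I would split it multiplicatively: route each situation $\sit$ to $I$ when $\uex_I(g_\sit)<0$ and to $I'$ otherwise, and define positive processes $\supermartin_I$ and $\supermartin_{I'}$, both started at unit capital, so that the process to which $\sit$ is routed picks up the factor $1+\adddelta\supermartin(\sit)/\supermartin(\sit)=\supermartin(\sit\andoutcome)/\supermartin(\sit)$ while the other stays constant. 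By the splitting lemma $\supermartin_I$ is a positive supermartingale for $\lex_I$ and $\supermartin_{I'}$ one for $\lex_{I'}$, and telescoping gives $\supermartin_I(\pth^n)\,\supermartin_{I'}(\pth^n)=\supermartin(\pth^n)\to+\infty$; hence at least one factor is unbounded along $\pth$, contradicting computable randomness for $\lex_I$ or $\lex_{I'}$.

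The step I expect to be most delicate is the computability of this split, since the routing criterion ``$\uex_I(g_\sit)<0$'' refers to the possibly non-computable gamble $\gamble$ and interval endpoints, so that $\supermartin_I$ and $\supermartin_{I'}$ are not evidently computable real processes. Here the strictness granted by Proposition~\ref{prop:equivalent} is what saves the argument: the splitting lemma guarantees that at least one of $\uex_I(g_\sit)$ and $\uex_{I'}(g_\sit)$ is \emph{strictly} negative, so---at least when $\gamble$ is computable---one may run simultaneous rational approximations of both and route $\sit$ to whichever is first certified below a negative rational threshold, a decision that is guaranteed to halt and that yields recursive rational processes. The passage to an arbitrary $\gamble$ should then follow from the uniform-approximation argument already used in Example~\ref{example}, using that coherent lower expectations are continuous for uniform convergence. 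Verifying that this routing simultaneously preserves recursiveness, positivity and the two supermartingale inequalities is the crux of the proof.
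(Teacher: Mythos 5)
Your overall architecture coincides with the paper's: non-emptiness via Corollary~\ref{cor:ChurchRandomnessStationary} with the constant selection process, and membership via Proposition~\ref{prop:equivalent}, followed by a multiplicative split of a recursive positive rational strict test supermartingale into two factors, one per interval, each of which must then stay bounded along $\pth$ by Corollary~\ref{cor:monotonicityStationary}. Your concavity ``splitting lemma'' is a correct analogue of the paper's Lemma~\ref{lemma:A}, and the algebra of the split (positivity, the supermartingale inequality for each factor, and $\supermartin_I\supermartin_{I'}=\supermartin$) is sound.

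The genuine gap is exactly the step you flag as the crux: the computability of the routing, and your proposed repair does not close it. The proposition quantifies over arbitrary $\gamble\in\gambles$ and arbitrary closed intervals, so $\gamble$, $\min I$, $\max I$, $\min I'$ and $\max I'$ may all be non-computable reals. Your dovetailing trick needs effective upper approximations of $\uex_I(g_\sit)$ and $\uex_{I'}(g_\sit)$, which are simply not available when the data defining these upper expectations are non-computable; strictness does not help, since it only guarantees that the search \emph{would} halt if one could run it. The fallback via uniform approximation of $\gamble$ by computable gambles also fails: in Example~\ref{example} that argument transfers a \emph{numerical inequality} by continuity (property~\ref{prop:coherence:uniformcontinuity}), whereas here you need to make an \emph{algorithmic decision} whose correctness must be judged against the original models $\lex_I$ and $\lex_{I'}$. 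Replacing $\gamble$ by an approximation $\gamble'$ changes the constraint set $\cset{p}{\ex_p(\gamble)\in I}$ in a way that is not uniformly controlled (in the paper's representation of these models, Equation~\eqref{form:Ef}, the multiplier $\mu$ is unbounded), and it does nothing at all about non-computable interval endpoints. The paper's solution is precisely to replace your semi-decidable routing criterion by one that is \emph{decidable by construction}: writing $\test=\mint$ for the recursive rational positive multiplier process $\multprocess$ of the test supermartingale, it routes a situation $\sit$ according to whether $\multprocess(\sit)(x)>1$ for some $x$ in the \emph{fixed finite set} $\set{x\in\posspace\colon\gamble(x)<\min(I\cap I')}$. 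This involves only comparisons of rational numbers over a finite subset of $\posspace$ that can be hard-coded into the algorithm as finite advice, so the routing process is recursive no matter how wild $\gamble$ and the endpoints are; the paper's Lemma~\ref{lemma:A} then shows that this crude test suffices to preserve the supermartingale property of the two factors, with respect to the one-sided models $\lex_{\min I,\gamble}$ and $\lex_{-\max I',-\gamble}$ of Proposition~\ref{prop:Ef} rather than your two-sided $\lex_I$ and $\lex_{I'}$. That decidable routing criterion is the missing idea your proof needs.
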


Together with Proposition~\ref{prop:monotonicityInterval} and the fact that $\mathcal{I}_\gamble(\pth)$ is always non-empty, this result implies that $\mathcal{I}_\gamble(\pth)$ is a filter of closed intervals. 
Since the intersection of a filter of closed intervals in a compact space---such as $\sqgroup{\min \gamble, \max \gamble}$---is always closed and non-empty \cite{aliprantis2006}, it follows that the intersection $\bigcap \mathcal{I}_\gamble(\pth)$ of all closed intervals $I$ for which $\pth$ is computably random with respect to $\gamble$ and $I$, is non-empty and closed, and is therefore a closed interval itself. 
Recalling the discussion in Section~\ref{sec:stat}, it furthermore follows that $\bigcap \mathcal{I}_\gamble(\pth) = \sqgroup{\lex_\pth(\gamble),\uex_\pth(\gamble)}$. 
Similar to what we saw in Section~\ref{sec:stat}, it may or may not be the case that $\pth$ is computably random for the gamble $\gamble$ and the interval $\sqgroup{\lex_\pth(\gamble),\uex_\pth(\gamble)}$; that is, the---possibly infinite---intersection $\bigcap \mathcal{I}_\gamble(\pth)$ may not be an element of $\mathcal{I}_\gamble(\pth)$. 
However, in this interval case, there is a way to completely characterise the models---in this case intervals---for which $\pth$ is computably random. 
To that end, we introduce the following two subsets of $\sqgroup{\min \gamble, \max \gamble}$:
\[
L_\gamble(\pth)\coloneq\{\min I: I \in \mathcal{I}_\gamble(\pth)\} \textrm{ and } U_\gamble(\pth)\coloneq\{\max I: I \in \mathcal{I}_\gamble(\pth)\}.
\]
Due to Proposition~\ref{prop:monotonicityInterval}\emph{\ref{subprop:monotonicityforinterval}}, these sets are intervals: on the one hand $L_\gamble(\pth)=\smash{[\min \gamble,\lex_\pth(\gamble)]}$ or $L_\gamble(\pth)=\smash{[\min \gamble,\lex_\pth(\gamble))}$ and on the other hand $U_\gamble(\pth)=\break\smash{[\uex_\pth(\gamble),\max \gamble]}$ or $U_\gamble(\pth) \allowbreak = \allowbreak \smash{(\uex_\pth(\gamble),\max \gamble]}$. 
As our final result shows, these two intervals allow for a simple characterisation of whether a path $\pth$ is computably random for a gamble $\gamble$ and a closed interval $I$.

\begin{proposition} \label{prop:woehoe}
Consider a path $\pth$, a gamble $\gamble \in \gambles$ and a closed interval $I$. 
Then $I \in \mathcal{I}_\gamble(\pth)$ if and only if $\min I \in L_\gamble(\pth) \textrm{ and } \max I \in U_\gamble(\pth)$.
\end{proposition}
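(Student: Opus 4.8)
The plan is to prove the two implications separately, with the forward direction being immediate and the converse the substantive one.

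For the `only if' direction, suppose $I \in \mathcal{I}_\gamble(\pth)$. Then $\min I$ is by definition the lower endpoint of an element of $\mathcal{I}_\gamble(\pth)$---namely $I$ itself---so $\min I \in L_\gamble(\pth)$, and likewise $\max I \in U_\gamble(\pth)$. This direction requires nothing beyond unpacking the definitions of $L_\gamble(\pth)$ and $U_\gamble(\pth)$.

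For the `if' direction, assume $\min I \in L_\gamble(\pth)$ and $\max I \in U_\gamble(\pth)$. By definition of these sets I can select witnesses $I_1, I_2 \in \mathcal{I}_\gamble(\pth)$ with $\min I_1 = \min I$ and $\max I_2 = \max I$. Since $\mathcal{I}_\gamble(\pth) \subseteq \mathcal{I}_f$, both witnesses lie in $\sqgroup{\min \gamble, \max \gamble}$, so in particular $\min \gamble \leq \min I$ and $\max I \leq \max \gamble$; together with $\min I \leq \max I$ this confirms $I \in \mathcal{I}_f$. The idea is now to enlarge $I_1$ and $I_2$ to two intervals whose intersection is exactly $I$. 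Concretely, $I_1 \subseteq \sqgroup{\min I, \max \gamble} \in \mathcal{I}_f$ and $I_2 \subseteq \sqgroup{\min \gamble, \max I} \in \mathcal{I}_f$, so by the upward closure in Proposition~\ref{prop:monotonicityInterval}\emph{\ref{subprop:monotonicityforinterval}} both $\sqgroup{\min I, \max \gamble}$ and $\sqgroup{\min \gamble, \max I}$ belong to $\mathcal{I}_\gamble(\pth)$.

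Finally, I would apply Proposition~\ref{prop:intersection} to these two enlarged intervals. Their intersection is $\sqgroup{\min I, \max \gamble} \cap \sqgroup{\min \gamble, \max I} = \sqgroup{\min I, \max I} = I$, using $\min \gamble \leq \min I \leq \max I \leq \max \gamble$. Proposition~\ref{prop:intersection} guarantees that this intersection is non-empty and again lies in $\mathcal{I}_\gamble(\pth)$, which yields $I \in \mathcal{I}_\gamble(\pth)$ as required. The only real care needed is the bookkeeping of endpoints---verifying that the enlargements stay inside $\mathcal{I}_f$ and that their intersection collapses to precisely $I$---since the substantive work (closure under enlargement and under intersection) is already carried out in Propositions~\ref{prop:monotonicityInterval} and~\ref{prop:intersection}. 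I therefore expect no serious obstacle: the result is essentially a filter-theoretic consequence of the two preceding propositions.
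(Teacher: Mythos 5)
Your proof is correct and takes essentially the same route as the paper's: both hinge on picking witnesses $I_1,I_2 \in \mathcal{I}_\gamble(\pth)$ with $\min I_1 = \min I$ and $\max I_2 = \max I$ and then combining Proposition~\ref{prop:intersection} with the upward closure of Proposition~\ref{prop:monotonicityInterval}\emph{\ref{subprop:monotonicityforinterval}}. The only (immaterial) difference is the order of the two steps: you enlarge $I_1,I_2$ to $\sqgroup{\min I,\max\gamble}$ and $\sqgroup{\min\gamble,\max I}$ and then intersect to obtain exactly $I$, whereas the paper first forms $I_1\cap I_2$ and then enlarges it to $I$.
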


So we see that while $\pth$ may not be computably random for $\gamble$ and the interval \smash{$\sqgroup{\lex_\pth(\gamble),\uex_\pth(\gamble)}$}, it will definitely be `almost' computably random, in the sense that it is surely random for $\gamble$ and any interval $I\in\mathcal{I}_f$ such that \smash{$\min I <\lex_\pth(\gamble)$} and \smash{$\max I > \uex_\pth(\gamble)$}. 
In order to get some further intuition about this result, we consider an example where $L_\gamble(\pth)$ and $U_\gamble(\pth)$ are closed, and where $\pth$ is therefore computably random for $\gamble$ and $\sqgroup{\lex_\pth(\gamble),\uex_\pth(\gamble)}$.

\begin{example}
Consider two probability mass functions $p_0$ and $p_1$, and let the coherent lower expectation $\lex$ be defined by $\lex(\gamble)\coloneq\min\{\ex_{p_0}(\gamble),\ex_{p_1}(\gamble)\}$ for all $\gamble \in \gambles$. 
Then, as we have seen in Example~\ref{example}, there is a path $\pth$ for which $\lex$ is the least conservative coherent lower expectation that makes $\pth$ random. 
Clearly, for any fixed $\gamble \in \gambles$, if we let $I\coloneqq[\lex(f),\uex(f)]$, it follows that $\bigcap \mathcal{I}_\gamble(\pth)=I\in\mathcal{I}_f(\pth)$, and therefore also that $L_\gamble(\pth)=\sqgroup{\min \gamble, \min I}$ and $U_\gamble(\pth)=\sqgroup{\max I,\max \gamble}$. 
Note that in this example, by suitably choosing $p_0$ and $p_1$, $I$ can be any interval in $\mathcal{I}_f$, including the extreme cases where $I=[\min f,\max f]$ or $I$ is a singleton. \xqed{$\lozenge$}
\end{example}

\section{Conclusions and future work}
\label{sec:concl}

We have introduced a notion of computable randomness for infinite sequences that take values in a finite sample space $\posspace$, both with respect to forecasting systems and with respect to two related simpler imprecise uncertainty models: coherent lower expectations and expectation intervals.
In doing so, we have generalised the imprecise notion of computable randomness of De Cooman and De Bock~\cite{CoomanBock2017}, from binary sample spaces to finite ones.

An important observation is that many of their ideas, results and conclusions carry over to our non-binary case. On our account as well as theirs, and in contrast with the classical notion of (precise) computable randomness, every path $\pth$ is for example computably random with respect to at least one uncertainty model, and whenever a path $\pth$ is computably random for a certain uncertainty model, it is also computably random for any uncertainty model that is more conservative---or less informative. 

For many of our results, the move from the binary to the non-binary case was fairly straightforward, and our proofs then mimic those in Ref.~\cite{CoomanBock2017}. For some results, however, additional technical obstacles had to be overcome, all related to the fact that coherent lower expectations are more involved than probability intervals.
Proposition~\ref{prop:intersection}, for example, while similar to an analogous result for probability intervals in Ref.~\cite{CoomanBock2017}, eluded us for quite a while. The key step that made the proof possible is our result that replacing computable (real) betting strategies with recursive (rational) ones leads to an equivalent notion of computable randomness; see Proposition~\ref{prop:equivalent}.

In our future work, we would like to extend our results in Section~\ref{sec:intervals}---that for every path $\pth$ and every gamble $\gamble$, $\pth$ is `almost' computably random for a unique smallest expectation interval---from expectation intervals to coherent lower expectations. 
That is, we would like to prove that every path $\pth$ is `almost' computably random for a unique maximally informative coherent lower expectation. We are convinced that, here too, Proposition~\ref{prop:equivalent} will prove essential.

Furthermore, we would like to develop imprecise generalisations of other classical notions of randomness, such as Martin-L\"of and Schnorr randomness \cite{ambosspies2000}, and explore whether these satisfy similar properties. 
Moreover, we want to explore whether there exist different equivalent imprecise notions of computable randomness in terms of generalised randomness tests, bounded machines etc. \cite{DowneyHirschfeldt2010} instead of supermartingales.
We also wonder if it would be possible to define notions of computable randomness with respect to uncertainty models that are even more general than coherent lower expectations, such as choice functions \cite{jaspergert2019}.

Finally, we believe that our research can function as a point of departure for developing completely new types of imprecise learning methods.
That is, we would like to create and implement novel algorithms that, given a finite sequence of data out of some infinite sequence, estimate the most informative expectation intervals or coherent lower expectation for which the infinite sequence is computably random.
In this way, we obtain statistical methods that are reliable in the sense that they do not insist anymore on associating a single precise probability mass function, which is for example, as was already mentioned in the introduction, not defensible in situations where relative frequencies do not converge.

\subsubsection*{Acknowledgements}
Floris Persiau's research was supported by BOF grant BOF19/DOC/196. 
Jasper De Bock and Gert de Cooman's research was supported by H2020-MSCA-ITN-2016 UTOPIAE, grant agreement 722734.
We would also like to thank the reviewers for carefully reading and commenting on our manuscript.

	 
%
%
%
\bibliographystyle{splncs04}
\bibliography{biblio}

\begin{ArxiveExt}
\section*{Appendix}
\addcontentsline{toc}{section}{Appendix}
\label{app:proofs}
In this Appendix, we have gathered all proofs, and all additional material necessary for understanding the argumentation in these proofs. 

\subsection*{Additional material for Section~\ref{sec:singleforecast}}
\label{app:singleforecast}
In some of our proofs, we will make use of the fact that a coherent lower expectation $\lex$ and its conjugate upper expectation $\uex$ satisfy the following properties \cite[Section 2.6.1]{walley1991}, with $a \in \reals$, $\alpha \geq 0$, $f,g \in \gambles$ and $\{\gamble_n\}_{n \in \naturalswithzero} \in \smash{\gambles^{\naturalswithzero}}$:
\begin{enumerate}[label=\upshape{C}\arabic*.,ref=\upshape{C}\arabic*,leftmargin=*]
\setcounter{enumi}{3}
\item \label{prop:coherence:bounds}{$\min\gamble \leq \lex(\gamble) \leq \uex(\gamble) \leq \max\gamble$}; \upshape\hfill[bounds]
\item \label{prop:coherence:homogeneity}{$\lex(\alpha \gamble) = \alpha \lex(\gamble) \textrm{ and } \uex(\alpha \gamble) = \alpha \uex(\gamble)$}; \upshape\hfill[non-negative homogeneity]
\item \label{prop:coherence:subadditivity}{$\lex(\gamble) + \lex(g) \leq \lex(\gamble+g) \textrm{ and } \uex(\gamble+g) \leq \uex(\gamble) + \uex(g)$}; \upshape\hfill[super/subadditivity]
\item \label{prop:coherence:constantadditivity}{$\lex(\gamble + a) = \lex(\gamble) + a \textrm{ and } \uex(\gamble + a) = \uex(\gamble) + a$}; \upshape\hfill[constant additivity]
\item \label{prop:coherence:increasingness} if $\gamble \leq g $ then $\lex(\gamble) \leq \lex(g) $ and $\uex(\gamble) \leq \uex(g)$; \upshape\hfill[increasingness]
\item \label{prop:coherence:uniformcontinuity} if $\lim_{n \to \infty} \max\abs{\gamble_n-\gamble}=0$ then $\lim_{n \to \infty}\lex(\gamble_n) = \lex(\gamble)$ \hfill \break and $\lim_{n \to \infty}\uex(\gamble_n) = \uex(\gamble)$. \upshape\hfill[uniform continuity]
\end{enumerate}

\subsection*{Proofs and additional material for Section~\ref{sec:forecastingsystems}}
\label{app:forecastingsystems}

In some of our proofs, we will use a particular way of creating test supermartingales.
We define a \emph{multiplier process} as a map $\multprocess$ from $\sits$ to \emph{non-negative} gambles on $\posspace$.
Given such a multiplier process $\multprocess$, we can define a non-negative real process $\mint$ by the recursion equation $\mint(sx)\coloneqq\mint(s)\multprocess(s)(x)$ for all $s\in\sits$ and $x\in\posspace$, with $\mint(\init)\coloneqq1$ and $\sit x$ the concatenation of $\sit$ and $x$.
Any multiplier process $\multprocess$ that satisfies the additional condition that $\uex_{\sit}(\multprocess(s))\leq1$ for all $s\in\sits$, is called a \emph{supermartingale multiplier} for the forecasting system $\frcstsystem$.

\begin{lemma} \label{lemma:supermultitestsuper}
Consider a multiplier process $D$ and a forecasting system $\frcstsystem$. 
If $D$ is a supermartingale multiplier for the forecasting system $\frcstsystem$, then the non-negative real process $\mint$ is a test supermartingale.
\end{lemma}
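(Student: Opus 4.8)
The plan is to verify directly the three defining properties of a test supermartingale for $\mint$, reducing everything to pointwise manipulations of the process difference. Recall that $\mint$ is a test supermartingale exactly when (i) $\mint(\sit)\geq0$ for all $\sit\in\sits$, (ii) $\mint(\init)=1$, and (iii) $\uex_\sit(\adddelta\mint(\sit))\leq0$ for every $\sit\in\sits$. Property (ii) holds by construction, so the work lies in (i) and (iii).

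For (i), I would argue by induction on the length of $\sit$. The base case $\mint(\init)=1\geq0$ is immediate. For the inductive step, since $\multprocess$ is a multiplier process it maps into \emph{non-negative} gambles, so $\multprocess(\sit)(x)\geq0$ for all $x\in\posspace$; combined with the induction hypothesis $\mint(\sit)\geq0$ and the recursion $\mint(\sit x)=\mint(\sit)\multprocess(\sit)(x)$, this yields $\mint(\sit x)\geq0$. Hence $\mint$ is non-negative.

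The key step is (iii). First I would rewrite the process difference: for every $\sit\in\sits$ and $x\in\posspace$,
\[
\adddelta\mint(\sit)(x)=\mint(\sit x)-\mint(\sit)=\mint(\sit)\big[\multprocess(\sit)(x)-1\big],
\]
so that, as a gamble on $\posspace$, $\adddelta\mint(\sit)=\mint(\sit)\,(\multprocess(\sit)-1)$. Since $\mint(\sit)\geq0$ by (i), non-negative homogeneity (\ref{prop:coherence:homogeneity}) gives $\uex_\sit(\adddelta\mint(\sit))=\mint(\sit)\,\uex_\sit(\multprocess(\sit)-1)$, and constant additivity (\ref{prop:coherence:constantadditivity}) gives $\uex_\sit(\multprocess(\sit)-1)=\uex_\sit(\multprocess(\sit))-1$. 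The supermartingale-multiplier assumption $\uex_\sit(\multprocess(\sit))\leq1$ then forces $\uex_\sit(\multprocess(\sit)-1)\leq0$, and multiplying by the non-negative factor $\mint(\sit)$ yields $\uex_\sit(\adddelta\mint(\sit))\leq0$, as required.

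I do not expect any serious obstacle; the argument is a direct computation once the process difference is factored as $\mint(\sit)(\multprocess(\sit)-1)$. The only point deserving a moment's care is the degenerate case $\mint(\sit)=0$, but this is covered by the $\alpha=0$ instance of non-negative homogeneity, which gives $\uex_\sit(0)=0\leq0$. Thus all three properties hold and $\mint$ is a test supermartingale.
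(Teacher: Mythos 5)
Your proof is correct and follows essentially the same route as the paper's: both factor the process difference as $\adddelta\mint(\sit)=\mint(\sit)[\multprocess(\sit)-1]$ and then apply non-negative homogeneity and constant additivity together with $\uex_\sit(\multprocess(\sit))\leq1$ to conclude $\uex_\sit(\adddelta\mint(\sit))\leq0$. The only difference is cosmetic: you verify non-negativity of $\mint$ by an explicit induction (and single out the $\mint(\sit)=0$ case), whereas the paper treats the non-negativity as immediate from the defining recursion; both are fine.
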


\begin{proof}{\bf of Lemma~\ref{lemma:supermultitestsuper}\quad}
For every $\sit \in \sits$, since
\begin{align*}
\adddelta\mint(s)(x)
=\mint(sx)-\mint(s)
=\mint(s) \multprocess(s)(x)-\mint(s)
=\mint(s)[\multprocess(s)(x)-1],
\end{align*}
for all $x \in \posspace$, we see that $\adddelta\mint(s) = \mint(s)[\multprocess(s)-1]$ and therefore, that 
\begin{align*}
\uex_{\sit}(\adddelta\mint(s)) 
=\uex_{\sit}(\mint(s)[\multprocess(s)-1])
&\overset{\textrm{\ref{prop:coherence:homogeneity}}}{=}\mint(s)\uex_{\sit}(\multprocess(s)-1) \\
&\overset{\textrm{\ref{prop:coherence:constantadditivity}}}{=}\mint(s)[\uex_{\sit}(\multprocess(s))-1]\leq0,
\end{align*}
where the inequality holds since $\uex_{\sit}(\multprocess(s)) \leq 1$ for all $\sit \in \sits$.
Since $\mint(\init)=1$ by definition, we conclude that $\mint$ is a test supermartingale.
\end{proof}

\subsection*{Proofs and additional material for Section~\ref{sec:computability}}
\label{app:computability}

The following basic results from computability theory are used in some of the proofs of this appendix. 

If $\process$ and $\processtoo$ are computable real processes, then so are $\process+\processtoo$, $\process\processtoo$, $\process/\processtoo$ (provided that $\processtoo(s)\neq0$ for all $s\in\sits$), $\max\set{\process,\processtoo}$, $\min\set{\process,\processtoo}$, $\exp(\process)$, $\ln\process$ (provided that $\process(s)>0$ for all $s\in\sits$), and \smash{$F^{\frac{1}{m}}$} (provided that $\process(s)\geq0$ for all $s\in\sits$) for all $m\in\naturals$ \cite[Chapter~0]{Pour-ElRichards2016}.

For recursive rational processes, similar properties hold. In Ref. \cite{Pour-ElRichards2016}, whenever they want to prove that some operation is recursive, they actually give the algorithm which produces the computation. 
Hence, since finitely many algorithms can be combined into one finite algorithm, it is easy to see that if $F$ and $G$ are recursive rational processes, then so are $\abs{F}$, $\lceil F \rceil$, the rational process $H$ defined by $H(s)=\sum_{k=0}^m F(x_{1:k})$ with $\sit=\xvaltom \in \sits$ and $m \in \naturalswithzero$, the rational process $H$ defined by $\prod_{k=0}^m F(x_{1:k})$ with $\sit=\xvaltom \in \sits$ and $m \in \naturalswithzero$, $F+G$, $FG$, $F/G$ (provided that $G(\sit)\neq 0$ for all $\sit \in \sits$), $\max\{F,G\}$ and $\min\{F,G\}$.
Indeed, consider for example the recursiveness of the rational process $F+G$. 
Since the rational processes $F$ and $G$ are recursive by assumption, there are two algorithms that upon the input of $\sit \in \sits$, output natural numbers $a_1(\sit),a_2(\sit),\sigma_1(\sit),\sigma_2(\sit) \in \naturalswithzero$ and $b_1(\sit),b_2(\sit) \in \naturals$ such that \smash{$F(\sit)=(-1)^{\sigma_1(\sit)}\frac{a_1(\sit)}{b_1(\sit)}$} and \smash{$G(\sit)=(-1)^{\sigma_2(\sit)}\frac{a_2(\sit)}{b_2(\sit)}$}. 
Hence, it remains to prove that there is an algorithm that upon the input of $a_1(\sit),a_2(\sit),\sigma_1(\sit),\sigma_2(\sit),b_1(\sit)$ and $b_2(\sit)$ outputs $a(\sit), \sigma(\sit) \in \naturalswithzero$ and $b(\sit) \in \naturals$ such that \smash{$F(\sit)+G(\sit)=(-1)^{\sigma_1(\sit)}\frac{a_1(\sit)}{b_1(\sit)}+(-1)^{\sigma_2(\sit)}\frac{a_2(\sit)}{b_2(\sit)}=(-1)^{\sigma(\sit)}\frac{a(\sit)}{b(\sit)}$}. 
To that end, we can consider any algorithm that yields 
\begin{flalign*}
a(\sit)&=\abs{(-1)^{\sigma_1(\sit)}a_1(\sit)b_2(\sit)+(-1)^{\sigma_2(\sit)}a_2(\sit)b_1(\sit)}, \\
b(\sit) &=b_1(\sit)b_2(\sit) 
\intertext{and} 
\sigma(\sit) &= \begin{cases}
0 &\textrm{if } (-1)^{\sigma_1(\sit)}a_1(\sit)b_2(\sit)+(-1)^{\sigma_2(\sit)}a_2(\sit)b_1(\sit) \geq 0, \\
1 &\textrm{otherwise.}
\end{cases}
\end{flalign*}
We conclude that, since finitely many algorithms that execute operations on sets of natural numbers and finitely many algorithms that provide those natural numbers can be combined into one finite algorithm, the process $F+G$ is recursive.

Furthermore, since $\posspace$ is a finite set and since finitely many algorithms can be combined into one, we will call a \emph{gamble process} $P \colon \sits \to \gambles$, which is a map from $\sits$ to gambles on $\posspace$, computable (recursive) if for every $x \in \posspace$, the real (rational) process $P(\bullet)(x)$ is computable (recursive).
The two types of gamble processes that we will consider in this appendix are multiplier processes $D$ and process differences $\adddelta F$.

\begin{proposition}\label{prop:computable}
A real process $\process$ is computable if and only if there is a recursive net of rational numbers $\{r_{\sit,n}\}_{\sit \in \sits, n \in \naturalswithzero}$ and a recursive rational function $e\colon \sits \times \naturalswithzero \to \smash{\rationals}$ such that for all $N \in \naturalswithzero$: $n \geq e(s,N) \implies \abs{r_{s,n}-\process(s)}\leq \smash{2^{-N}}$ for all $s\in\sits$ and $n\in\naturalswithzero$.
\end{proposition}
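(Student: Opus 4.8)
The statement is an equivalence between the definition of computability (via a recursive net converging \emph{effectively}, i.e.\ with the uniform modulus $n\geq N\Rightarrow\abs{r_{\sit,n}-\process(\sit)}\leq 2^{-N}$) and a formally weaker-looking condition where the modulus is allowed to depend recursively on $\sit$ and $N$ through $e$. The plan is to prove the two directions separately, the forward (``only if'') direction being immediate and the converse (``if'') direction being the substantive one.

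For the ``only if'' direction I would simply observe that if $\process$ is computable, then by definition there is a recursive net $\{r_{\sit,n}\}_{\sit\in\sits,n\in\naturalswithzero}$ of rationals converging effectively to $\process$, so that $n\geq N$ already implies $\abs{r_{\sit,n}-\process(\sit)}\leq 2^{-N}$. Taking $e(\sit,N)\coloneq N$, which is a recursive rational function on $\sits\times\naturalswithzero$, the required implication $n\geq e(\sit,N)\Rightarrow\abs{r_{\sit,n}-\process(\sit)}\leq 2^{-N}$ holds trivially.

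For the ``if'' direction I would use the standard trick of collapsing the separate modulus into the index by composition. Assume we are given a recursive net $\{r_{\sit,n}\}$ and a recursive rational modulus $e\colon\sits\times\naturalswithzero\to\rationals$ with the stated property. Define a reindexing $m\colon\sits\times\naturalswithzero\to\naturalswithzero$ by $m(\sit,n)\coloneq\max\{0,\lceil e(\sit,n)\rceil\}$, and set $r'_{\sit,n}\coloneq r_{\sit,m(\sit,n)}$. Since $m(\sit,n)\geq e(\sit,n)$ by construction, applying the hypothesis with precision index $N=n$ yields $\abs{r'_{\sit,n}-\process(\sit)}=\abs{r_{\sit,m(\sit,n)}-\process(\sit)}\leq 2^{-n}$ for every $\sit\in\sits$ and $n\in\naturalswithzero$. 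Consequently, whenever $n\geq N$ we get $\abs{r'_{\sit,n}-\process(\sit)}\leq 2^{-n}\leq 2^{-N}$, so $\{r'_{\sit,n}\}$ converges effectively to $\process$ in the sense of the definition.

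It then remains to check that $\{r'_{\sit,n}\}$ is itself a recursive net of rationals, which I expect to be the only genuinely technical point (though still routine). Here I would invoke the closure properties collected earlier in the appendix: $\lceil\,\cdot\,\rceil$ and $\max\{0,\cdot\}$ applied to a recursive rational object are again recursive, so $m$ is a recursive map into $\naturalswithzero$; and writing the given net as $r_{\sit,n}=(-1)^{\sigma(\sit,n)}a(\sit,n)/b(\sit,n)$ with $a,b,\sigma$ recursive on $\sits\times\naturalswithzero$, the composed maps $(\sit,n)\mapsto a(\sit,m(\sit,n))$, and likewise for $b$ and $\sigma$, are recursive as compositions of recursive functions, so $\{r'_{\sit,n}\}$ is a recursive net. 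The mild subtleties to handle carefully are exactly that $e$ is rational- (not natural-) valued, which is why the ceiling is needed, and that $e$ could a priori be non-positive, which is why truncating at $0$ guarantees $m(\sit,n)\in\naturalswithzero$ while preserving $m(\sit,n)\geq e(\sit,n)$. This establishes computability of $\process$ and completes the equivalence.
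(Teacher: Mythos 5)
Your proposal is correct and follows essentially the same route as the paper's own proof: the ``only if'' direction by taking $e(\sit,N)=N$, and the ``if'' direction by passing to the reindexed net $\sit,n\mapsto r_{\sit,\max\{0,\lceil e(\sit,n)\rceil\}}$, whose effectivity ($n\geq N$ gives error at most $2^{-n}\leq 2^{-N}$) and recursiveness (closure of recursive objects under $\lceil\cdot\rceil$, $\max\{0,\cdot\}$ and composition) are argued exactly as in the paper.
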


\begin{proof}{\bf of Proposition~\ref{prop:computable}\quad}
For the `only if' part, since $\process$ is computable, there is a recursive net of rational numbers $\{r_{\sit,n}\}_{\sit \in \sits, n \in \naturalswithzero}$ such that $\abs{r_{\sit,n}-\process(\sit)}\leq 2^{-n}$ for all $\sit \in \sits$ and $n \in \naturalswithzero$. It therefore suffices to let $e(\sit,n)\coloneq n$ such that for all $N \in \naturalswithzero \colon n \geq N = e(\sit,N) \implies \abs{r_{\sit,n}-\process(\sit)}\leq 2^{-n} \leq 2^{-N}$ for all $s\in\sits$ and $n\in\naturalswithzero$.

For the `if' part, assume there is a recursive net of rational numbers \break $\{r'_{\sit,n}\}_{\sit \in \sits, n \in \naturalswithzero}$ and a recursive rational function $e\colon\sits\times\naturalswithzero\to\rationals$ such that $n\geq e(s,N)$ implies $\abs{r'_{s,n}-F(s)}\leq2^{-N}$ for all $s\in\sits$ and $n,N\in\naturalswithzero$.
Consider now the net of rational numbers \smash{$\{r_{\sit,n}\}_{\sit \in \sits, n \in \naturalswithzero}$}, defined by \smash{$r_{\sit,n} \coloneqq r'_{s,\max\{0,\lceil e(s,n) \rceil\}}$} for all $\sit \in \sits$ and $n \in \naturalswithzero$. 
This net is recursive because the function $e$---and hence also $\max\{0,\lceil e \rceil \}$---and the net of rational numbers $\{r'_{\sit,n}\}_{\sit \in \sits, n \in \naturalswithzero}$ are both recursive.
Moreover, for every $\sit \in \sits$ and $n,N \in \naturalswithzero$ such that $n \geq N$, since $\max\{0,\lceil e(s,n) \rceil\} \geq \lceil e(s,n) \rceil \geq e(s,n)$ and $\max\{0,\lceil e(s,n) \rceil\} \in \naturalswithzero$, we find that $\abs{r_{s,n}-\process(s)}=\abs{r'_{s,\max\{0,\lceil e(s,n) \rceil\}}-\process(s)}\leq 2^{-n} \leq 2^{-N}$.
Hence, by definition, $\process$ is computable.
\end{proof}
\par
\vspace{\topsep}

We will also require the following result, which establishes a link between the computability of real processes and real multiplier processes.

\begin{proposition}\label{prop:computable:from:multiplier}
For any computable multiplier process $\multprocess$, the associated real process $\mint$ is computable.
\end{proposition}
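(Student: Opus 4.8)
The plan is to produce an explicit recursive net of rationals for $\mint$ together with a recursive modulus of convergence, and then to invoke Proposition~\ref{prop:computable}. Writing $\sit=\xvaltom$, recall that $\mint(\sit)=\prod_{k=0}^{m-1}\multprocess(\xvaltok)(\xvalnextk)$, a product whose number of factors grows with the length $m$ of $\sit$. Since $\multprocess$ is a computable multiplier process, each real process $\multprocess(\bullet)(x)$ is computable, and as $\posspace$ is finite we may collect the finitely many approximating nets into a single recursive net $\{d_{\sit,n,x}\}$ of rationals with $\abs{d_{\sit,n,x}-\multprocess(\sit)(x)}\leq 2^{-n}$ for all $\sit\in\sits$, $n\in\naturalswithzero$ and $x\in\posspace$.

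First I would extract recursive rational bounds on the factors. Because $\abs{d_{\sit,0,x}-\multprocess(\sit)(x)}\leq 1$ and $\multprocess(\sit)(x)\geq 0$, the rational $d_{\sit,0,x}+1$ dominates $\multprocess(\sit)(x)$, so $B(\sit)\coloneqq\max_{x\in\posspace}(d_{\sit,0,x}+1)$ is a nonnegative recursive rational process bounding every factor, and $\tilde B\coloneqq B+1\geq 1$ is recursive as well. Using the fact, recorded among the computability facts, that products indexed by the length of $\sit$ are recursive, I would then form $\tilde P(\sit)\coloneqq\prod_{k=0}^{m-1}\tilde B(\xvaltok)\geq 1$ and the candidate net $r_{\sit,n}\coloneqq\prod_{k=0}^{m-1}\max\{0,d_{\xvaltok,n,\xvalnextk}\}$, both recursive for the same reason.

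The heart of the argument is the error estimate. Abbreviating $a_k\coloneqq\multprocess(\xvaltok)(\xvalnextk)$ and $\rho_k\coloneqq\max\{0,d_{\xvaltok,n,\xvalnextk}\}$, so that $0\leq a_k\leq B(\xvaltok)$ and $\abs{\rho_k-a_k}\leq 2^{-n}$, I would apply the telescoping identity
\[
\prod_{k=0}^{m-1}\rho_k-\prod_{k=0}^{m-1}a_k=\sum_{j=0}^{m-1}\Bigl(\prod_{k<j}\rho_k\Bigr)(\rho_j-a_j)\Bigl(\prod_{k>j}a_k\Bigr).
\]
Bounding each $\rho_k$ and $a_k$ by $\tilde B(\xvaltok)$ and using $\tilde B\geq 1$, every summand is at most $\tilde P(\sit)\,2^{-n}$, so $\abs{r_{\sit,n}-\mint(\sit)}\leq m\,\tilde P(\sit)\,2^{-n}$. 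Setting the recursive rational function $e(\sit,N)\coloneqq N+\lceil m\,\tilde P(\sit)\rceil$ and using $2^{y}\geq y$ for $y\geq 0$, one has $2^{\lceil m\tilde P(\sit)\rceil}\geq m\,\tilde P(\sit)$; hence $n\geq e(\sit,N)$ forces $m\,\tilde P(\sit)\,2^{-n}\leq 2^{-N}$ and therefore $\abs{r_{\sit,n}-\mint(\sit)}\leq 2^{-N}$. Proposition~\ref{prop:computable}, applied to the recursive net $\{r_{\sit,n}\}$ and the function $e$, then certifies that $\mint$ is computable.

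The main obstacle is precisely the unbounded number of factors: each factor is only computable rather than rational, and factors may exceed $1$, so a naive propagation of per-factor errors blows up with $m$. The device that resolves this is the recursive rational overestimate $\tilde P(\sit)$ of the product of the factor bounds, which lets the modulus $e$ absorb this growth; note that it is essential here that Proposition~\ref{prop:computable} permits a genuinely $\sit$-dependent modulus, rather than the uniform choice $e(\sit,N)=N$ underlying the bare definition of a computable real process.
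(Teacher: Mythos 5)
Your proof is correct and takes essentially the same route as the paper's own: both define the approximating net as the product of rational approximants of the factors (you handle possible negativity by truncating at zero, the paper by taking absolute values), bound the error via the same telescoping identity using recursive rational factor bounds extracted from the level-$0$ approximations, and then construct an $\sit$-dependent recursive modulus via the inequality $2^{y}\geq y$ before invoking Proposition~\ref{prop:computable}. The differences are cosmetic: your quantity $m\,\tilde P(\sit)$ plays exactly the role of the paper's $\alpha(\sit)$.
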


\begin{proof}{\bf of Proposition~\ref{prop:computable:from:multiplier}\quad}
Assume that the multiplier process $\multprocess$ is computable.
This means that $\multprocess(\bullet)(x)$ is computable for every $x \in \posspace$. Hence, for every $x \in \posspace$, there is a recursive net of rational numbers~$\{r^x_{\sit,n}\}_{\sit \in \sits, n \in \naturalswithzero}$ such that $\abs{D(\sit)(x)-r_{\sit,n}^x} \leq 2^{-n}$ for all $\sit \in \sits$ and $n \in \naturalswithzero$. 
Since $\multprocess$ is non-negative, it follows that $\abs{D(\sit)(x)-\abs{r_{\sit,n}^x}} = \abs{\abs{D(\sit)(x)}-\abs{r_{\sit,n}^x}} \leq \abs{D(\sit)(x)-r_{\sit,n}^x}$ for all $\sit \in \sits$, $n \in \naturalswithzero$ and $x \in \posspace$.
Hence, for every $x \in \posspace$, we may assume without loss of generality that $\{r^x_{\sit,n}\}_{\sit \in \sits, n \in \naturalswithzero}$ is non-negative; if not, we can replace it with the---also---recursive net $\{\abs{r^x_{\sit,n}}\}_{\sit \in \sits, n \in \naturalswithzero}$.
Observe also that $0 \leq D(\sit)(x) \leq 1 + r_{\sit,0}^x$ and $0 \leq r_{\sit,n}^x \leq 2^{-n} + D(\sit)(x) \leq 2^{-n} + 1 + r_{\sit,0}^x$,
for every $\sit \in \sits$, $n \in \naturalswithzero$ and $x \in \posspace$.

Next, we define a recursive net of rational numbers~$\{r_{\sit,n}\}_{\sit \in \sits, n \in \naturalswithzero}$ as follows: for any $\sit=x_{1:m}\in\sits$, with $m\in\naturalswithzero$, and for any $n\in\naturalswithzero$, we let $r_{\sit,n}\coloneqq\prod_{k=0}^{m-1}r_{x_{1:k},n}^{x_{k+1}}$. 
In line with Proposition~\ref{prop:computable}, we will prove that $\mint$ is computable by showing that there is a recursive rational function $e\colon\sits \times \naturalswithzero \to \rationals$ such that for all $N \in \naturalswithzero$: $n \geq e(s,N) \implies \abs{r_{s,n}-\mint(s)}\leq 2^{-N}$ for all $s\in\sits$ and $n\in\naturalswithzero$. 
For any $\sit = x_{1:m} \in \sits$ and $n \in \naturalswithzero$, with $m \in \naturalswithzero$, we see that
\begin{align*}
\abs{\mint(s)-r_{s,n}} 
&= \bigg| \prod_{k=0}^{m-1}D(x_{1:k})(x_{k+1})-\prod_{k=0}^{m-1}r_{x_{1:k},n}^{x_{k+1}} \bigg| \\
&= \bigg| \sum_{l=0}^{m-1}\bigg( \bigg(
\prod_{k=0}^{l-1}r_{x_{1:k},n}^{x_{k+1}}\bigg)
\big(D(x_{1:l})(x_{l+1})-r_{x_{1:l},n}^{x_{l+1}}\big)
\bigg( \prod_{k=l+1}^{m-1}D(x_{1:k})(x_{k+1})
\bigg) \bigg) \bigg| \\
&\leq \sum_{l=0}^{m-1}\bigg(\bigg(
\prod_{k=0}^{l-1}r_{x_{1:k},n}^{x_{k+1}} \bigg)
\abs{D(x_{1:l})(x_{l+1})-r_{x_{1:l},n}^{x_{l+1}}}
\bigg(\prod_{k=l+1}^{m-1}D(x_{1:k})(x_{k+1})\bigg)
\bigg) \\
&\leq 2^{-n}
\sum_{l=0}^{m-1}\bigg(
\prod_{k=0}^{l-1}r_{x_{1:k},n}^{x_{k+1}}
\prod_{k=l+1}^{m-1}D(x_{1:k})(x_{k+1})
\bigg) \\
&\leq 2^{-n}
\sum_{l=0}^{m-1}\bigg(
\prod_{k=0}^{l-1}\Big(2^{-n}+1+r_{x_{1:k},0}^{x_{k+1}}\Big)
\prod_{k=l+1}^{m-1}\Big(1+r_{x_{1:k},0}^{x_{k+1}}\Big)
\bigg) \\
&\leq 2^{-n}
\sum_{l=0}^{m-1}\bigg(
\prod_{k=0}^{l-1}\Big(2+r_{x_{1:k},0}^{x_{k+1}}\Big)
\prod_{k=l+1}^{m-1}\Big(2+r_{x_{1:k},0}^{x_{k+1}}\Big)
\bigg) \\
&\leq 2^{-n}
\sum_{l=0}^{m-1}\bigg(
\prod_{k=0}^{m-1}\Big(2+r_{x_{1:k},0}^{x_{k+1}}\Big)
\bigg) 
= 2^{-n} \alpha(\sit),
\end{align*}
with $\alpha$ the recursive non-negative rational process defined by 
\[
\alpha(\sit) \coloneq \sum_{l=0}^{m-1}\bigg(
\prod_{k=0}^{m-1}\Big(2+r_{x_{1:k},0}^{x_{k+1}}\Big) \bigg),
\] for all $\sit=\xvaltom \in \sits$, with $m \in \naturalswithzero$.
We now consider the recursive rational function $e$ defined by
\begin{align*}
e(\sit,N)
\coloneq N + \alpha(\sit) \quad \textrm{ for all } \sit \in \sits \textrm{ and } N \in \naturalswithzero.
\end{align*}
Then for all $n,N \in \naturalswithzero$ and $\sit \in \sits$, $n \geq e(\sit,N)$ implies that 
\begin{align*}
\abs{\mint(s)-r_{s,n}} 
&\leq 2^{-n} \alpha (\sit)
\leq 2^{-e(\sit,N)} \alpha (\sit)
= 2^{-N}\frac{\alpha (\sit)}{2^{\alpha(\sit)}} 
\leq 2^{-N},
\end{align*}
where the last inequality follows from the fact that $2^y \geq y$ if $y \geq 0$. It therefore follows from Proposition~\ref{prop:computable} that $\mint$ is computable.
\end{proof}

\subsection*{Proofs and additional material for Section~\ref{sec:randomsequences}}
\label{app:randomsequences}

\begin{lemma} \label{lemma:comprec}
For every computable non-negative real supermartingale $\supermartin$, there is a recursive positive rational strict test supermartingale $\supermartin'$  and a positive rational $\alpha>0$ such that $\abs{\alpha \test'(\sit)-\test(\sit)} \leq 7$ for all $\sit \in \sits$.
\end{lemma}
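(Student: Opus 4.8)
The plan is to approximate $\test$ by recursive rationals and then superimpose a depth‑dependent downward drift that is just large enough to turn the (possibly non‑strict) supermartingale inequality into a strict one, after which a constant shift and a normalisation take care of positivity and of the unit initial value. The crucial point to keep in mind throughout is that the underlying forecasting system need not be computable, so we may \emph{not} evaluate the upper expectations $\uex_\sit$; we can only approximate the real numbers $\test(\sit)$, and strictness must be argued purely from the coherence properties \ref{prop:coherence:bounds}, \ref{prop:coherence:subadditivity} and \ref{prop:coherence:constantadditivity} together with the defining inequality $\uex_\sit(\adddelta\test(\sit))\le0$.

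Concretely, I would fix a recursive rational net $\{r_{\sit,n}\}_{\sit\in\sits,n\in\naturalswithzero}$ converging effectively to $\test$, let $n$ denote the length of $\sit$, and set $\rho(\sit)\coloneqq r_{\sit,n+2}-\test(\sit)$, so that $\abs{\rho(\sit)}\le 2^{-(n+2)}$. I then define the recursive rational process $q(\sit)\coloneqq r_{\sit,n+2}-2(1-2^{-n})$, its shift $\test''\coloneqq q+3$, and finally $\test'\coloneqq\test''/\alpha$ with $\alpha\coloneqq\test''(\init)$. Each of these is a recursive rational process by the closure properties recalled at the start of this appendix: the length $n$, the powers $2^{-n}$, the constant shift, and the division by the fixed rational $\alpha$ are all recursive. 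Moreover $\test'(\init)=1$ by construction.

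The heart of the argument is the verification that $\test'$ is a strict supermartingale. Since a constant shift leaves all increments unchanged and a positive rescaling only multiplies $\uex_\sit(\adddelta\test'(\sit))$ by $1/\alpha$ (by \ref{prop:coherence:homogeneity}), it suffices to show $\uex_\sit(\adddelta q(\sit))<0$ for every $\sit$. Telescoping yields $\adddelta q(\sit)=\adddelta\test(\sit)+\rho(\sit\,\cdot)-\rho(\sit)-2^{-n}$, where $\rho(\sit\,\cdot)$ is the gamble $x\mapsto\rho(\sit x)$, the terms $\rho(\sit)$ and $2^{-n}$ are constants, and the $-2^{-n}$ is exactly the per‑step increment of the subtracted drift $2(1-2^{-n})$. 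Applying \ref{prop:coherence:constantadditivity}, then \ref{prop:coherence:subadditivity}, then \ref{prop:coherence:bounds}, and using $\uex_\sit(\adddelta\test(\sit))\le0$, I obtain
\[
\uex_\sit(\adddelta q(\sit))\le\max_{x\in\posspace}\rho(\sit x)-\rho(\sit)-2^{-n}\le 2^{-(n+3)}+2^{-(n+2)}-2^{-n}=-\tfrac{5}{8}\,2^{-n}<0,
\]
so $\test'$ is a strict test supermartingale. Positivity follows because $\test''(\sit)=\test(\sit)+\rho(\sit)-2(1-2^{-n})+3\ge 0-\tfrac14-2+3=\tfrac34>0$, where non‑negativity of $\test$ is used; in particular $\alpha=\test''(\init)>0$ is a genuine positive rational, whence $\test'>0$ everywhere.

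Finally, the additive bound is immediate from the same estimates: $\alpha\test'(\sit)=\test''(\sit)$, so $\abs{\alpha\test'(\sit)-\test(\sit)}=\abs{\rho(\sit)-2(1-2^{-n})+3}\le\tfrac14+2+3=\tfrac{13}{4}\le7$ for all $\sit\in\sits$. The main obstacle is precisely the strictness step: because $\uex_\sit$ is unavailable computationally, the rational rounding on its own cannot be trusted to preserve the inequality $\uex_\sit(\adddelta\test(\sit))\le0$. The remedy is the monotone drift, chosen so that its per‑step increment $2^{-n}$ dominates the worst‑case rounding fluctuation $\max_{x}\rho(\sit x)-\rho(\sit)\le 2^{-(n+1)}$, while its total size stays bounded by $2$ so that positivity and the additive‑$7$ bound survive.
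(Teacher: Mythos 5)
Your construction is correct and is essentially the paper's own proof: rewritten, your process is $\test'(\sit)=\bigl(r_{\sit,d(\sit)+2}+2\cdot2^{-d(\sit)}+1\bigr)/\bigl(r_{\init,2}+3\bigr)$, i.e.\ the same depth-indexed rational approximation plus a geometrically decaying term whose per-step decrement dominates the rounding error (the paper uses $\bigl(r_{\sit,d(\sit)}+6\cdot2^{-d(\sit)}\bigr)/\bigl(r_{\init,0}+6\bigr)$), with the same normalisation to a recursive positive rational strict test supermartingale and the same coherence properties invoked for strictness. The only blemish is the displayed arithmetic in your final bound, where $\tfrac14+2+3$ equals $\tfrac{21}{4}$, not $\tfrac{13}{4}$ --- although the correct signed estimate does give $\tfrac{13}{4}$, and either value is $\leq 7$, so the conclusion stands.
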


\begin{proof}{\bf of Lemma~\ref{lemma:comprec}\quad}
Consider any computable non-negative real supermartingale $\test$. 
Since $\test$ is computable, there is a recursive net of rational numbers $\{r_{\sit,n}\}_{\sit \in \sits, n \in \naturalswithzero}$ such that 
\begin{equation} \label{hulp:computable}
\abs{\test(\sit)-r_{s,n}}\leq 2^{-n} \quad \textrm{ for every } \sit \in \sits \textrm{ and } n \in \naturalswithzero.
\end{equation} 
For every situation $\sit=\xvaltom \in \sits$, with $m \in \naturalswithzero$, we consider its depth $d(\sit)=m$ in the tree, with of course $d(\sit) \geq d(\init)=0$, and we define the rational process $\test'$ by
\[
\test'(\sit)= \frac{r_{\sit,d(\sit)}+6 (2)^{-d(\sit)}}{r_{\init,0}+6} \quad \textrm{ for all } \sit \in \sits.
\]
Since the function $d$ and the net $\{r_{\sit,n}\}_{\sit \in \sits, n \in \naturalswithzero}$ are recursive, it follows that the rational process $\test'$ is recursive as well. 
Furthermore, it follows from Equation~\eqref{hulp:computable} that $r_{\sit x,d(\sit x)} \leq 2^{-d(\sit x)}+\test(\sit x)$ and $\test(\sit)-2^{-d(\sit)} \leq r_{\sit,d(\sit )}$ for every $\sit \in \sits$ and $x \in \posspace$. 
In particular for $\sit=\init$, this guarantees that $\test'$ is well-defined because $r_{\init,0}+6 \geq M(\init) + 5 >0$, since $M(\init)\geq0$ by assumption.
Moreover, it is easy to see that $\test'(\init)=1$ and that $\test'$ is positive.
It indeed holds that
\[
\test'(\init)= \frac{r_{\init,0}+6}{r_{\init,0}+6} =1
\]
and that
\[
\test'(\sit) = \frac{r_{\sit,d(\sit)}+6(2)^{-d(\sit)}}{r_{\init,0}+6} 
\geq \frac{\test(\sit)+5(2)^{-d(\sit)}}{r_{\init,0}+6} 
\geq \frac{5(2)^{-d(\sit)}}{r_{\init,0}+6} 
>0
\]
for all $\sit \in \sits$.
Next, we show that $\test'$ is a strict supermartingale. For every $\sit \in \sits$, since it holds for every $x \in \posspace$ that
\begin{align*}
\adddelta \test'(\sit)(x)
&=\test'(\sit x)-\test'(\sit) \\
&=\frac{1}{r_{\init,0}+6}(r_{\sit x,d(\sit x)}+6(2)^{-d(\sit x)}-r_{\sit,d(\sit)}-6(2)^{-d(\sit)}) \\
&\leq\frac{1}{r_{\init,0}+6}(\test(\sit x)+7(2)^{-d(\sit x)} - \test(\sit) -5(2)^{-d(\sit)}) \\
&=\frac{1}{r_{\init,0}+6}(\adddelta\test(\sit)(x)+7(2)^{-d(\sit)-1} -10(2)^{-d(\sit)-1}) \\
&=\frac{1}{r_{\init,0}+6}(\adddelta\test(\sit)(x)-3(2)^{-d(\sit)-1}),
\end{align*}
it follows that $\adddelta \test'(\sit) \leq \frac{1}{r_{\init,0}+6}(\adddelta \test(\sit)-3(2)^{-d(\sit)-1})$. For all $\sit \in \sits$, since $\uex_\sit(\adddelta \test(\sit)) \leq 0$ and $r_{\init,0}+6 > 0$, this implies that
\begin{align*}
\uex_\sit(\adddelta \test'(\sit))
&\overset{\textrm{\ref{prop:coherence:increasingness}}}{\leq}\uex_\sit \bigg( \frac{1}{r_{\init,0}+6}\Big(\adddelta\test(\sit)-3(2)^{-d(\sit)-1}\Big)\bigg) \\
&\overset{\textrm{\ref{prop:coherence:homogeneity}}}{=} \frac{1}{r_{\init,0}+6}\uex_\sit\Big(\adddelta\test(\sit)-3(2)^{-d(\sit)-1}\Big) \\
&\overset{\textrm{\ref{prop:coherence:constantadditivity}}}{=} \frac{1}{r_{\init,0}+6}\uex_\sit(\adddelta\test(\sit))-\frac{3(2)^{-d(\sit)-1}}{r_{\init,0}+6} 
\leq -\frac{3(2)^{-d(\sit)-1}}{r_{\init,0}+6} 
< 0.
\end{align*}
We conclude that $\test'$ is a recursive positive rational strict test supermartingale.
For the rational number $\alpha \coloneq r_{\init,0}+6 >0$ and for any $\sit \in \sits$, we furthermore have that
\begin{align*}
\abs{\alpha \test'(\sit)-\test(\sit)}
&=\abs{(r_{\init,0}+6)\test'(\sit)-\test(\sit)} \\
&= \abs{r_{\sit,d(\sit)}+6 (2)^{-d(\sit)}-\test(\sit)}\\
&\leq 6 (2)^{-d(\sit)} + \abs{r_{\sit,d(\sit)}-\test(\sit)} \\
&\leq 6 (2)^{-d(\sit)} + 2^{-d(\sit)} 
= 7(2)^{-d(\sit)} 
\leq 7.
\end{align*}
\end{proof}

\begin{lemma} \label{lemma:limit}
For every recursive positive rational test supermartingale $\test$ and every path $\pth$ such that $\sup_{n \in \naturalswithzero}\test(\pth^n)=+\infty$, there is a computable positive real test supermartingale $\test'$ such that $\lim_{n \to \infty} \test'(\pth^n)=+\infty$.
\end{lemma}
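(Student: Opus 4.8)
The plan is to run a \emph{savings} argument. The hypothesis only gives $\sup_n\test(\pth^n)=+\infty$, so $\test$ itself may oscillate and need not converge to $+\infty$ along $\pth$; the remedy is to extract, for each level $2^k$, a stopped copy of $\test$ that freezes whatever capital $\test$ holds the first time it reaches $2^k$, and then to superpose these frozen copies with weights chosen so that their locked-in gains add up to $+\infty$ in the \emph{limit}. Since $\sup_n\test(\pth^n)=+\infty$ guarantees that $\pth$ reaches every level $2^k$, each stopped copy contributes at least $2^k$ along $\pth$ from some point on.

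Concretely, for each $k\in\naturals$ I would define a process $\test_k$ on $\sits$ by letting $\test_k(\sit)\coloneqq\test(\sit^*)$ for the first prefix $\sit^*\precedes\sit$ with $\test(\sit^*)\geq 2^k$, and $\test_k(\sit)\coloneqq\test(\sit)$ when no prefix of $\sit$ reaches $2^k$. A one-line case split shows that $\adddelta\test_k(\sit)=\adddelta\test(\sit)$ as long as $\sit$ has not yet reached level $2^k$ and $\adddelta\test_k(\sit)=0$ afterwards; since $\test(\init)=1<2^k$, this makes each $\test_k$ a positive test supermartingale, because $\uex_\sit(\adddelta\test_k(\sit))$ is either $\uex_\sit(\adddelta\test(\sit))\leq0$ or $\uex_\sit(0)=0$. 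I would then put $\test'\coloneqq\sum_{k=1}^\infty w_k\test_k$ with the telescoping rational weights $w_k\coloneqq\frac{1}{k(k+1)}$, so that $\sum_k w_k=1$ while $\sum_k w_k 2^k=+\infty$.

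Three checks then finish the argument. (i) Writing $R(\sit)\coloneqq\max_{\sit'\precedes\sit}\test(\sit')$, only the thresholds with $2^k>R(\sit)$ have a nonzero increment at $\sit$, and all of those increments equal $\adddelta\test(\sit)$; hence $\adddelta\test'(\sit)=W(\sit)\,\adddelta\test(\sit)$ with $W(\sit)\coloneqq\sum_{k\colon 2^k>R(\sit)}w_k\in(0,1]$, so non-negative homogeneity gives $\uex_\sit(\adddelta\test'(\sit))=W(\sit)\uex_\sit(\adddelta\test(\sit))\leq0$, while $\test'(\init)=\sum_k w_k=1$ and positivity are immediate. (ii) For $2^k>R(\sit)$ one has $\test_k(\sit)=\test(\sit)$, so the infinite sum collapses to a finite sum of exactly computable rational terms plus $\test(\sit)\sum_{k>m(\sit)}w_k=\test(\sit)/(m(\sit)+1)$ with $m(\sit)\coloneqq\lfloor\log_2 R(\sit)\rfloor$; as $R(\sit)$, $m(\sit)$ and every $\test_k(\sit)$ are produced from $\test$ by a finite recursive search over the finitely many prefixes of $\sit$, the process $\test'$ is recursive rational and therefore a computable real process. (iii) For fixed $k$, the hypothesis forces $\pth$ to reach $2^k$, after which $\test_k(\pth^n)$ stabilises at some $\ell_k\geq 2^k$, so $\liminf_n\test'(\pth^n)\geq\sum_{k=1}^K w_k\ell_k\geq\sum_{k=1}^K w_k 2^k$ for every $K$; letting $K\to\infty$ yields $\lim_n\test'(\pth^n)=+\infty$.

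The main obstacle is to make the \emph{infinite} superposition $\sum_k w_k\test_k$ at once a legitimate supermartingale and genuinely computable. Both are settled by the single observation that at any situation only finitely many stopped copies are still active---exactly those whose threshold exceeds the running maximum $R(\sit)$---and that these move in lockstep with $\test$: locally this collapses the infinite sum to the scalar multiple $W(\sit)\adddelta\test(\sit)$, and globally it collapses $\test'(\sit)$ to a finite recursive computation. The telescoping weights are what make the remaining tail $\sum_{k>m}w_k=1/(m+1)$ exactly rational, sparing any further effective-convergence bookkeeping.
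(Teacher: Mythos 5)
Your proof is correct, and it rests on the same core idea as the paper's: superpose countably many stopped copies of $\test$, one for each level $2^k$, with summable weights, so that along $\pth$ every copy eventually locks in a gain of order $2^k$. The execution, however, is genuinely different in its details, and in a way that pays off. The paper's copies $\test^{(k)}$ are \emph{capped} at exactly $2^k$ (once the running maximum reaches $2^k$ the copy becomes the constant $2^k$) and the weights are $2^{-k}$, so each saturated copy contributes exactly $1$; your copies freeze at the actual first-hit (overshoot) value and you use the telescoping weights $1/(k(k+1))$. Because your frozen copies move in lockstep with $\test$ until they stop, the increment of the infinite sum collapses to the scalar multiple $W(\sit)\,\adddelta\test(\sit)$, and the supermartingale property follows from non-negative homogeneity alone; the paper's capped copies do not collapse this way (at the hitting step the increment is truncated to $2^k-\test(\sit)$, which varies with $k$), so the paper instead passes to the limit of partial sums and invokes the uniform-continuity property \ref{prop:coherence:uniformcontinuity} of $\uex_\sit$. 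Likewise, your rational tail sums $\sum_{k>m(\sit)}w_k=1/(m(\sit)+1)$ make $\test'$ an exactly \emph{recursive rational} process, so computability is immediate from the paper's remark that recursive rational processes are computable real processes, whereas the paper constructs a recursive net of rational approximations with an explicit modulus of convergence and invokes Proposition~\ref{prop:computable}. Your route thus avoids both \ref{prop:coherence:uniformcontinuity} and Proposition~\ref{prop:computable}, and it even delivers a formally stronger object (a recursive positive rational test supermartingale rather than merely a computable real one); what the paper's capping buys is only a marginally cleaner divergence bound ($\test'(\pth^n)\geq N$ once the first $N$ levels are hit, versus your divergent series $\sum_{k\leq K}2^k/(k(k+1))$). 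Both arguments establish the lemma; the only point you should make explicit is that subtracting the two convergent positive series term by term (to get $\adddelta\test'(\sit)=\sum_k w_k\adddelta\test_k(\sit)$) is licensed by their absolute convergence, which your bound $\test_k(\sit)\leq R(\sit)$ already provides.
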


\begin{proof}{\bf of Lemma~\ref{lemma:limit}\quad}
For any $k \in \naturals$, we consider the rational process $\test^{(k)}$, defined for all $\sit =\xvaltom \in \sits$, with $m \in \naturalswithzero$, by
\begin{equation*}
\test^{(k)}(\xvaltom)
\coloneqq
\begin{cases}
2^k&\text{ if $\max_{l \in \{0,\dots,m\}}\test(\xvaltol)\geq 2^k$}, \\
\test(\sit)&\text{ otherwise}.
\end{cases}
\end{equation*}
Clearly, $\test^{(k)}$ is recursive and positive since $\test$ is recursive and positive.
Moreover, since $\test(\init)=1$, we see that also $\test^{(k)}(\init)=1$.
Furthermore, for every $\sit=\xvaltom \in \sits$, with $m \in \naturalswithzero$, since $\uex_\sit(\adddelta \test(\sit))\leq0$, it will follow that also $\uex_\sit(\adddelta \test^{(k)}(\sit))\leq 0$. 
To prove this, we consider two cases.
The first case is when $\test^{(k)}(\sit)=2^k$.
Then for all $x \in \posspace$, it holds that $\test^{(k)}(\sit x)=2^k$.
Hence, $\adddelta\test^{(k)}(\sit)=0$ and therefore $\uex_\sit(\adddelta \test^{(k)}(\sit))=0$ due to \ref{prop:coherence:homogeneity} for $\alpha=0$.
The second case is when $\test^{(k)}(\sit)=\test(\sit) \neq 2^k$. This means that $\max_{l \in \{0,\dots,m\}}\test(\xvaltol) < 2^k$.
Then for all $x \in \posspace$, it follows that $\test^{(k)}(\sit x) = \test(\sit x)$ if $\test(\sit x) < 2 ^k$ and $\test^{(k)}(\sit x) = 2^k \leq \test(\sit x)$ otherwise, and hence in both cases: $\test^{(k)}(\sit x) \leq \test(\sit x)$.
Since $\test^{(k)}(\sit)=\test(\sit)$, this implies that $\adddelta \test^{(k)}(\sit) \leq \adddelta \test(\sit)$ and therefore that $\uex_\sit(\adddelta \test^{(k)}(\sit)) \leq \uex_\sit(\adddelta \test(\sit)) \leq 0$ due to \textrm{\ref{prop:coherence:increasingness}} and since $\test$ is a supermartingale.
In summary, $\test^{(k)}$ is a recursive positive rational test supermartingale.

We now define the recursive net of rational numbers $\{r_{\sit,n}\}_{\sit \in \sits, n \in \naturalswithzero}$ by
\[
r_{\sit,n}\coloneq\sum_{k =1}^n 2^{-k} \test^{(k)}(\sit),
\]
for all $\sit \in \sits$ and $n \in \naturalswithzero$, and consider the process $\test'$ defined by $\test'(\sit)\coloneq \lim_{n \to \infty}r_{\sit,n}$ for all $\sit \in \sits$. 
Since for every $k \in \naturals$ and $\sit=\xvaltom \in \sits$, with $m \in \naturalswithzero$, $\test^{(k)}(\sit)$ is positive and bounded above by $\max_{l \in \{0,\dots,m\}}\test(\xvaltol)$, it follows that $\{r_{\sit,n}\}_{ n \in \naturalswithzero}$ is a non-negative strictly increasing sequence that is also bounded above by $\max_{l \in \{0,\dots,m\}}\test(\xvaltol)$; so $\test'(\sit)$ is a well-defined real positive number. Hence, $\test'$ is a positive real process. 
Next, we consider the recursive rational function $e$ defined by
\begin{align*}
e(\sit,N) 
&\coloneq N+ \max_{l \in \{0,\dots,m\}}\test(\xvaltol),
\end{align*}
for all $N \in \naturalswithzero$ and $\sit=\xvaltom \in \sits$, with $m \in \naturalswithzero$. Then for all $n,N,m \in \naturalswithzero$ and $\sit=\xvaltom \in \sits$, $n \geq e(\sit,N)$ implies that
\begin{align*}
\abs{\test'(\sit)-r_{\sit,n}} 
&=\abs{\test'(\sit) - \sum_{k =1}^n 2^{-k} \test^{(k)}(\sit) } \\
&= \sum_{k =n+1}^{+\infty} 2^{-k} \test^{(k)}(\sit) \\
&\leq \sum_{k =n+1}^{+\infty} 2^{-k} \max_{l \in \{0,\dots,m\}}\test(\xvaltol) \\
&= 2^{-n} \max_{l \in \{0,\dots,m\}}\test(\xvaltol) \\
&\leq 2^{-e(\sit,N)} \max_{l \in \{0,\dots,m\}}\test(\xvaltol) \\
&= 2^{-N} \frac{\max_{l \in \{0,\dots,m\}}\test(\xvaltol)}{2^{\max_{l \in \{0,\dots,m\}}\test(\xvaltol)}} 
\leq 2^{-N},
\end{align*}
where the last inequality follows from the fact that $2^y \geq y$ if $y \geq 0$. Hence, $\test'$ is computable by Proposition~\ref{prop:computable}.

Next, we prove that $\test'$ is a test supermartingale.
Since for all $k \in \naturals$, $\test^{(k)}(\init)=1$, it follows that $\test'(\init)=1$.
Since for every $\sit \in \sits$, $\test'(\sit)= \lim_{n \to \infty} \sum_{k =1}^n \break 2^{-k} \test^{(k)}(\sit)$ is well-defined and real, it follows that for every $\sit \in \sits$ and $x \in \posspace$, 
\begin{align*}
\adddelta \test'(\sit)(x)
=\test'(\sit x)-\test'(\sit) 
&=\lim_{n \to \infty}\sum_{k =1}^n 2^{-k} \test^{(k)}(\sit x) - \lim_{n \to \infty}\sum_{k =1}^n 2^{-k} \test^{(k)}(\sit) \\
&=\lim_{n \to \infty}\sum_{k =1}^n 2^{-k} \adddelta\test^{(k)}(\sit)
\end{align*}
is well-defined and real.
Since $\posspace$ is finite, this implies that \scalebox{0.9}{$\big\{\sum_{k =1}^n 2^{-k} \adddelta \test^{(k)}(\sit)\big\}_{n \in \naturalswithzero}$} converges uniformly to $\adddelta \test'(\sit)$.
Hence, for every $\sit \in \sits$, since the upper expectation $\uex_\sit$ is continuous w.r.t. uniform convergence \eqref{prop:coherence:uniformcontinuity}, it follows that
\begin{align*} 
\uex_\sit(\adddelta\test'(\sit))
=\uex_\sit\Big(\lim_{n \to \infty}\sum_{k =1}^n 2^{-k} \adddelta\test^{(k)}(\sit)\Big)
&\overset{\phantom{\textrm{C}}\textrm{\ref{prop:coherence:uniformcontinuity}}\phantom{\textrm{C}}}{=}\lim_{n \to \infty} \uex_\sit\Big(\sum^n_{k = 1}2^{-k} \adddelta\test^{(k)}(\sit)\Big) \\
&\overset{\textrm{\ref{prop:coherence:homogeneity},\ref{prop:coherence:subadditivity}}}{\leq}\limsup_{n \to \infty} \sum^n_{k = 1}2^{-k} \uex_\sit(\adddelta\test^{(k)}(\sit))
\leq 0,
\end{align*}
where the last inequality follows from the fact that $\uex_\sit(\adddelta\test^{(k)}(\sit)) \leq 0$ for all $k \in \naturals$.
We conclude that $\test'$ is a computable positive real test supermartingale.

Next, we prove that $\lim_{n \to \infty}\test'(\pth^n)=+\infty$ by showing that for every $N \in \naturalswithzero$, there is a $K \in \naturalswithzero$ such that for every $n \geq K$, it holds that $\test'(\pth^n)\geq N$, with $n \in \naturalswithzero$. 
So we fix any $N \in \naturalswithzero$. 
Since $\sup_{n \in \naturalswithzero}\test(\pth^n)=+\infty$, there is a $K \in \naturalswithzero$ such that \smash{$\test(\pth^K) \geq 2^N$}, and hence $\test^{(k)}(\smash{\pth^n})=2^k$ for all $k \in \{1,\dots,N\}$ and $n \in \naturalswithzero$ such that $n \geq K$.
This implies that for all $n \in \naturalswithzero$ such that $n \geq K$, it holds that
\begin{align*}
\test'(\pth^n) 
&= \sum_{k \in \naturals} 2^{-k} \test^{(k)}(\pth^n)
\geq \sum_{k =1}^{N} 2^{-k} \test^{(k)}(\pth^n)
=N,
\end{align*}
where the inequality holds because $\test^{(k)} >0$ for all $k \in \naturals$.
\end{proof}

\begin{proposition} \label{prop:realrat}
Consider any forecasting system $\frcstsystem$. Then for any path $\pth$, the following statements are equivalent:
\begin{enumerate}
\item[\textnormal{(i)}] There is a computable real non-negative supermartingale $\test$ for which \\ $\allowbreak \sup_{n \in \naturalswithzero} \test(\pth^n) \allowbreak= + \infty$.
\item[\textnormal{(ii)}] There is a recursive positive rational strict test supermartingale $\test$ for which $\lim_{n \to \infty} \test(\pth^n)=+ \infty$. 
\end{enumerate}
\end{proposition}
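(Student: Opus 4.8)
The plan is to prove the two implications separately, with the reverse implication \textnormal{(ii)}~$\Rightarrow$~\textnormal{(i)} being essentially immediate and the forward one \textnormal{(i)}~$\Rightarrow$~\textnormal{(ii)} requiring a round trip through the two preceding lemmas. For \textnormal{(ii)}~$\Rightarrow$~\textnormal{(i)}, I would simply observe that a recursive positive rational strict test supermartingale already qualifies as a computable real non-negative supermartingale: a recursive rational process is a computable real process, positivity implies non-negativity, and a strict supermartingale is in particular a supermartingale since $\uex_\sit(\adddelta\test(\sit))<0\leq0$. Finally, $\lim_{n\to\infty}\test(\pth^n)=+\infty$ trivially gives $\sup_{n\in\naturalswithzero}\test(\pth^n)=+\infty$, so the very same $\test$ witnesses \textnormal{(i)}.

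For \textnormal{(i)}~$\Rightarrow$~\textnormal{(ii)} I would start from a computable real non-negative supermartingale $\test$ with $\sup_n\test(\pth^n)=+\infty$ and chain the two lemmas. First apply Lemma~\ref{lemma:comprec} to obtain a recursive positive rational strict test supermartingale $\test_1$ and a rational $\alpha_1>0$ with $\abs{\alpha_1\test_1(\sit)-\test(\sit)}\leq 7$ for all $\sit\in\sits$; since $\alpha_1\test_1(\pth^n)\geq\test(\pth^n)-7$, the unboundedness of $\test$ along $\pth$ transfers to $\sup_n\test_1(\pth^n)=+\infty$. This places us exactly in the hypothesis of Lemma~\ref{lemma:limit}, which produces a computable positive real test supermartingale $\test_2$ with $\lim_{n\to\infty}\test_2(\pth^n)=+\infty$, thereby upgrading $\sup$ to $\lim$, but at the cost of leaving the recursive-rational and strict setting.

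To recover those two properties while retaining the limit behaviour, I would apply Lemma~\ref{lemma:comprec} a second time, now to $\test_2$ (which is a computable non-negative real supermartingale), obtaining a recursive positive rational strict test supermartingale $\test_3$ and a rational $\alpha_3>0$ with $\abs{\alpha_3\test_3(\sit)-\test_2(\sit)}\leq 7$. Because $\alpha_3\test_3(\pth^n)\geq\test_2(\pth^n)-7$ and $\test_2$ now diverges to $+\infty$ along $\pth$, dividing by the fixed constant $\alpha_3>0$ yields $\lim_{n\to\infty}\test_3(\pth^n)=+\infty$, so $\test_3$ is the required witness for \textnormal{(ii)}.

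The conceptual obstacle is recognising that neither lemma alone suffices: Lemma~\ref{lemma:comprec} secures the recursive/rational/strict structure but, when applied to the object from \textnormal{(i)}, only propagates the $\sup$ form of divergence, whereas Lemma~\ref{lemma:limit} secures the $\lim$ form but only for computable real test supermartingales. One therefore has to compose them in the order comprec $\to$ limit $\to$ comprec, using the uniform additive bound at each stage to carry the divergence across the conversions. The only routine verifications are that this uniform bound indeed transports $\sup=+\infty$ and $\lim=+\infty$ unchanged, which is immediate since $\alpha_1$ and $\alpha_3$ are fixed positive constants, and that $\test_2$ meets the input requirements of the second application of Lemma~\ref{lemma:comprec}.
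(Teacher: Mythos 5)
Your proposal is correct and follows essentially the same route as the paper's own proof: the trivial direction (ii)~$\Rightarrow$~(i), and for (i)~$\Rightarrow$~(ii) the identical composition Lemma~\ref{lemma:comprec} $\to$ Lemma~\ref{lemma:limit} $\to$ Lemma~\ref{lemma:comprec}, using the uniform bound $\abs{\alpha\test'(\sit)-\test(\sit)}\leq 7$ to carry the divergence across each conversion. Your write-up is in fact slightly more explicit than the paper's, which leaves the transfer of $\sup=+\infty$ and $\lim=+\infty$ as immediate consequences.
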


\begin{proof}{\bf of Proposition~\ref{prop:realrat}\quad} (ii)$\Rightarrow$(i) holds trivially.
For the reverse implication, consider any computable real non-negative supermartingale $\test$ such that $\sup_{n \in \naturalswithzero} \test(\pth^n)=+ \infty$. 
By applying Lemma~\ref{lemma:comprec} to $\test$, we know there is a recursive positive rational strict test supermartingale $\test'$ and a positive rational $\alpha>0$ such that $\abs{\alpha \test'(\sit)-\test(\sit)} \leq 7$ for all $\sit \in \sits$.
Since $\sup_{n \in \naturalswithzero}\test(\pth^n)=+\infty$, it follows that $\sup_{n \in \naturalswithzero}\test'(\pth^n)=+\infty$.
By applying Lemma~\ref{lemma:limit} to $\test'$, it therefore follows that there is a computable real positive test supermartingale $\test''$ such that $\lim_{n \to \infty}\test''(\pth^n)=+\infty$.
By again applying Lemma~\ref{lemma:comprec}, this time to $\test''$, we know there is a recursive positive rational strict test supermartingale $\test'''$ and a positive rational $\alpha' >0$ such that $\abs{\alpha' \test'''(\sit)-\test''(\sit)} \leq 7$ for all $\sit \in \sits$.
Since $\lim_{n \to \infty}\test''=+\infty$, it follows that $\lim_{n \to \infty}\test'''=+\infty$, which concludes the proof.
\end{proof}

\par \vspace{\topsep}

\begin{proof}{\bf of Proposition~\ref{prop:equivalent}\quad}
This is an immediate consequence of Proposition~\ref{prop:realrat} and Definition~\ref{def:randomsequences}.
\end{proof}

\par \vspace{\topsep}

\begin{proof}{\bf of Proposition~\ref{prop:existence}\quad}
Let $\test^{(i)}$, $i \in \naturals$, be an enumeration of the countably infinite set of recursive positive rational strict test supermartingales that correspond with the forecasting system $\frcstsystem$, and consider the countably infinite convex combination $\test'\coloneq \sum_{i \in \naturals} 2^{-i}\test^{(i)}$.
For every $i \in \naturals$, $\test^{(i)}$ is positive and $\test^{(i)}(\init)=1$.
Hence, $\test'$ is also positive and $\test'(\init)=1$.
Moreover, for every $\sit \in \sits$, $\test'(\sit)$ is well-defined (but possibly infinite) since it is a countably infinite sum of positive numbers.

Consider now any $\sit \in \sits$ such that $\test'(\sit)$ is real. 
We will show that there is then always an $x \in \posspace$ such that $\test'(\sit x) \leq \test'(\sit)$.
Assume \emph{ex absurdo} that this is not true, i.e., $\test'(\sit)< \test'(\sit x)$ for all $x \in \posspace$.
For every $x \in \posspace$, since $\test'(\sit x)=\sum_{i \in \naturals} 2^{-i} \test^{(i)}(\sit x)$ and $\test'(\sit)$ is real, it follows that there is an $N_x \in \naturals$ such that $\test'(\sit) < \sum_{i=1}^{N_x} 2^{-i} \test^{(i)}(\sit x)$.
Now let $N\coloneq \max_{x \in \posspace} N_x$, which is a well-defined natural number since $\posspace$ is finite.
Then for every $x \in \posspace$, since $\test'\coloneq \sum_{i \in \naturals} 2^{-i}\test^{(i)}$ and each $\test^{(i)}$, $i \in \naturals$, is positive, we see that $\sum_{i=1}^{N} 2^{-i} \test^{(i)}(\sit) < \test'(\sit) < \sum_{i=1}^{N_x} 2^{-i} \test^{(i)}(\sit x) \leq \sum_{i=1}^{N} 2^{-i} \test^{(i)}(\sit x)$, and therefore also $\sum_{i=1}^{N} 2^{-i} \big( \test^{(i)}(\sit x)\break -\test^{(i)}(\sit) \big)>0$.
Since $\posspace$ is finite, it follows that $\min_{x \in \posspace} \big\{ \sum_{i=1}^{N} \big( 2^{-i}\test^{(i)}(\sit x)-2^{-i}\test^{(i)}(\sit) \big)\big\} > 0$. 
However, since for every $i \in \naturals$, it holds that $\uex_\sit(\adddelta\test^{(i)}(\sit)) \leq 0$, we also know that
\begin{align*}
\min_{x \in \posspace} \big\{ \sum_{i=1}^{N} \big( 2^{-i}\test^{(i)}(\sit x)-2^{-i}\test^{(i)}(\sit) \big) \big\} 
&\overset{\phantom{\textrm{\ref{prop:coherence:homogeneity},\ref{prop:coherence:subadditivity}}}}{=} \min \big\{ \sum_{i=1}^{N} 2^{-i}\adddelta\test^{(i)}(\sit) \big\} \\
&\overset{\phantom{\textrm{C}}\textrm{\ref{prop:coherence:bounds}}\phantom{\textrm{C}}}{\leq} \uex_\sit \big( \sum_{i=1}^{N} 2^{-i}\adddelta\test^{(i)}(\sit) \big) \\
&\overset{\textrm{\ref{prop:coherence:homogeneity},\ref{prop:coherence:subadditivity}}}{\leq} \sum_{i=1}^{N} 2^{-i}\uex_\sit(\adddelta\test^{(i)}(\sit)) 
\leq 0,
\end{align*}
which is clearly a contradiction.

Next, we select a path as follows. 
We start at $\init$, with $\test'(\init)=1 \in \reals$. 
As proven above, there is an $x_1 \in \posspace$ such that $ \test'(x_1) \leq \test'(\init)=1$. 
Since $\test'$ is positive, this implies that $\test'(x_1)$ is real.
Since $\test'(x_1)$ is real, we can apply the argument above once more, yielding an $x_2 \in \posspace$ such that $\test'(x_1,x_2) \leq \test'(x_1) \leq \test'(\init)=1$.
By iteration, we obtain a path $\pth=(\xvaltolong,\dots)$ such that $\test'$ is bounded above by $\test'(\init)=1$ along this path.
Now, consider any recursive positive rational strict test supermartingale $\test$.
There is then always an $i' \in \naturals$ such that $\test=\test^{(i')}$.
Since for all $i \in \naturals$, $\test^{(i)}$ is positive, it holds that $\test^{(i')}$ is bounded above by $2^{i'}$ along $\pth$ since for every $n \in \naturalswithzero$:
\[
1 = \test'(\init) \geq \test'(\pth^n) = \sum_{i \in \naturals} 2^{-i} \test^{(i)}(\pth^n) > 2^{-i'} \test^{(i')}(\pth^n). 
\]
Hence, every recursive positive rational strict test supermartingale that corresponds with $\frcstsystem$ remains bounded above along $\pth$. We therefore infer from Proposition~\ref{prop:equivalent} that $\frcstsystem \in \frcstsystemscomp$.
\end{proof}

\par \vspace{\topsep}

\begin{proof}{\bf of Proposition~\ref{prop:VacuousPhi}\quad}
In the imprecise probability tree associated with the vacuous forecasting system $\smash{\lex_{\bullet,v}}$, it holds for every $\sit \in \sits$ and $x \in \posspace$ that $\adddelta \test(\sit)(x) \leq \max(\adddelta \test(\sit)) = \uex_{\sit,v}(\adddelta \test(\sit)) \leq 0$.
Consequently, every computable real non-negative supermartingale $\test$ is bounded above by $\test(\init)$ along any path $\pth \in \pths$ since for every $\sit=\xvalto \in \sits$, with $n \in \naturalswithzero$, it holds that $\test(\sit)=\test(\init) + \sum_{k=0}^{n-1} \adddelta \test(\xvaltok)(x_{k+1}) \leq \test(\init)$.
If we now invoke Definition~\ref{def:randomsequences}, we find that $\smash{\lex_{\bullet,v}} \in \underline{\mathcal{E}}^\sits_\mathrm{C}$ for all $\pth \in \pths$.
\end{proof}

\par \vspace{\topsep}

\begin{proof}{\bf of Proposition~\ref{prop:monotonicity}\quad}
Since $\lex'_\sit(\gamble) \leq \lex_\sit(\gamble)$ for all $\sit \in \sits$ and $\gamble \in \gambles$, it follows from the conjugacy of coherent lower and upper expectations that  $\uex'_\sit(\gamble) \geq \uex_\sit(\gamble)$ for all $\sit \in \sits$ and $\gamble \in \gambles$.
This implies that for all computable non-negative supermartingales $\test \in \smash{\callowabletests{\lex'_\bullet}}$, it holds that $0 \geq \smash{\uex'_\sit(\adddelta \test(\sit))} \geq \uex_\sit(\adddelta \test(\sit))$ for all $\sit \in \sits$, and hence \smash{$\callowabletests{\lex'_\bullet} \subseteq \callowabletests{\frcstsystem}$}. 
From Definition~\ref{def:randomsequences}, we conclude that $\lex'_{\bullet} \in \frcstsystemscomp$, since all computable non-negative supermartingales $\test \in \smash{\callowabletests{\lex_\bullet}}$ remain bounded along $\pth$.
\end{proof}

\begin{lemma} \label{lem:inequality}
For the functions $f,g\colon (-1,+\infty) \to \reals$ defined by $f(y)\coloneq \ln(1+y)$ and $g(y) \coloneq y - y^2$ for all $y \in (-1,+\infty)$, it holds that $f(y) \geq g(y)$ if $y > -\frac{1}{2}$.
\end{lemma}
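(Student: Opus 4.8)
The plan is to reduce the inequality to showing that a single auxiliary function is non-negative, and then to pin down its unique minimiser by a routine first-derivative analysis. Concretely, I would set $h(y)\coloneq f(y)-g(y)=\ln(1+y)-y+y^2$ for $y\in(-1,+\infty)$, and aim to prove $h(y)\geq 0$ for every $y>-\tfrac{1}{2}$. The natural anchor is the observation that $h(0)=\ln 1-0+0=0$, so it suffices to establish that $y=0$ is a global minimiser of $h$ on $(-\tfrac{1}{2},+\infty)$.

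Next I would differentiate and simplify. A direct computation gives
\[
h'(y)=\frac{1}{1+y}-1+2y=\frac{-y}{1+y}+2y=\frac{y(2y+1)}{1+y},
\]
valid for all $y>-1$. The key point is that the hypothesis $y>-\tfrac{1}{2}$ is exactly what controls the two non-trivial factors: for such $y$ both $1+y>\tfrac{1}{2}>0$ and $2y+1>0$, so the sign of $h'(y)$ coincides with the sign of $y$.

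From this sign analysis the conclusion is immediate. The function $h$ is strictly decreasing on $(-\tfrac{1}{2},0)$ and strictly increasing on $(0,+\infty)$, with $h'(0)=0$; hence $h$ attains its minimum over $(-\tfrac{1}{2},+\infty)$ at $y=0$, where $h(0)=0$. Therefore $h(y)\geq 0$ throughout, i.e. $f(y)\geq g(y)$ whenever $y>-\tfrac{1}{2}$, with equality only at $y=0$.

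I do not expect any genuine obstacle here; the only thing to be careful about is the algebraic simplification of $h'$ into the factored form $\tfrac{y(2y+1)}{1+y}$, since it is precisely this form that reveals why the hypothesis $y>-\tfrac{1}{2}$---rather than merely $y>-1$---is needed: it is what forces $2y+1>0$ and thereby fixes the sign of $h'$ to match that of $y$.
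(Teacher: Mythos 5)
Your proposal is correct and is essentially the paper's own argument: both compute the derivative of the difference $f-g$, factor it as $\frac{y(1+2y)}{1+y}$, observe that its sign matches that of $y$ on $(-\tfrac{1}{2},+\infty)$, and anchor the conclusion at $f(0)=g(0)=0$. Packaging this as minimising the auxiliary function $h=f-g$ rather than comparing $\frac{df}{dy}$ and $\frac{dg}{dy}$ directly is only a cosmetic difference.
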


\begin{proof}{\bf of Lemma~\ref{lem:inequality}\quad}
For $y > -1$, it will turn out useful to know that
\begin{align*}
\frac{df}{dy}(y)-\frac{dg}{dy}(y)
=\frac{1}{1+y} - (1-2y)
=\frac{y(1+2y)}{1+y}.
\end{align*}
If $y \geq 0$, it is easy to see that \smash{$\frac{y(1+2y)}{1+y}\geq 0$}, and hence \smash{$\frac{df}{dy}(y) \geq \frac{dg}{dy}(y)$}.
Similarly, if $-\nicefrac{1}{2}<y \leq 0$, it is easy to see that \smash{$\frac{y(1+2y)}{1+y}\leq 0$}, and hence $\frac{df}{dy}(y) \leq \frac{dg}{dy}(y)$.

Now, for $y=0$, it holds that $f(0)=0=g(0)$. Consequently, for $y > 0$, it holds that $f(y) \geq g(y)$ since $\frac{df(y)}{dy}\geq\frac{dg(y)}{dy}$ for $y \geq 0$.
Similarly, for $y \in (-\nicefrac{1}{2},0)$, it holds that $f(y) \geq g(y)$ since $\frac{df(y)}{dy}\leq\frac{dg(y)}{dy}$ for $y \in (-\nicefrac{1}{2},0]$. 
\end{proof}

\par \vspace{\topsep}
In order to state our next result and prove our next two results, we require an additional notion. Consider a real process $\test \colon\sits\to\reals$ and a selection process $\selection\colon\sits\to\outcomes$. We then define the real process $\avg{\test}_\selection\colon\sits\to\reals$ as follows:
\[
\avg{\test}_\selection(\xvalto) \coloneqq
\begin{cases}
0
&\text{ if $\sum_{k=0}^{n-1}\selection(\xvaltok)=0$}\\
\dfrac{\sum_{k=0}^{n-1}\selection(\xvaltok)
\sqgroup{\adddelta \test(\xvaltok)(x_{k+1})}}
{\sum_{k=0}^{n-1}\selection(\xvaltok)} 
&\text{ if $\sum_{k=0}^{n-1}\selection(\xvaltok)>0$},
\end{cases}
\]
for all $n\in\naturalswithzero$ and $\xvalto\in\sits$. We will now use this notion in the following lemma, whose proof is strongly based on the proof of \cite[Lemma 9]{cooman2015:markovergodic}.

\begin{lemma}\label{lem:martinwlln}
Consider any real $B>0$, $0<\xi<\frac{1}{B}$ and a selection process $\selection$.
Let\/ $\submartin$ be any submartingale such that $\vert\adddelta\submartin\vert\leq B$.
Then the process $\process_\submartin$ defined by:
\begin{equation*}
\process_\submartin(\xvalto)
\coloneqq\prod_{k=0}^{n-1} 
\sqgroup[\big]{1-\xi\selection(\xvaltok)\adddelta\submartin(\xvaltok)(\xvalnextk)} \quad
\text{ for all $n\in\naturalswithzero$ and $\xvalto \in \sits$}
\end{equation*}
is a positive supermartingale with $\process_\submartin(\init)=1$.
Moreover, if in particular $\xi \coloneq \frac{\epsilon}{2B^2}$, with $0<\epsilon<B$, we have that 
\begin{multline*}
\avg{\submartin}_\selection(\xvalto)\leq-\epsilon
\then
\process_\submartin(\xvalto)
\geq\exp\group[\bigg]{\frac{\epsilon^2}{4B^2}\sum_{k=0}^{n-1}\selection(\xvaltok)}\\
\text{for all $n\in\naturalswithzero$ and $\xvalto \in \sits$}.
\end{multline*}
Finally, if $\xi$, $\adddelta\submartin$ and $\selection$ are computable, then $\process_\submartin$ is computable as well.
\end{lemma}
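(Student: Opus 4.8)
The plan is to recognise $\process_\submartin$ as the process generated by a single multiplier process and then to settle the three claims in order. First I would introduce the multiplier process $\multprocess\colon\sits\to\gambles$ defined by $\multprocess(\sit)(x)\coloneqq 1-\xi\selection(\sit)\adddelta\submartin(\sit)(x)$ for all $\sit\in\sits$ and $x\in\posspace$, and note that then $\process_\submartin=\mint$, the associated real process from the appendix recursion $\mint(\sit x)=\mint(\sit)\multprocess(\sit)(x)$ with $\mint(\init)=1$.

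For the supermartingale claim I would verify that $\multprocess$ is a \emph{supermartingale multiplier}. Positivity of the gambles $\multprocess(\sit)$ follows since $\selection(\sit)\in\outcomes$ and $\abs{\adddelta\submartin(\sit)(x)}\leq B$ give $\abs{\xi\selection(\sit)\adddelta\submartin(\sit)(x)}\leq\xi B<1$ (as $\xi<\nicefrac1B$), so $\multprocess(\sit)(x)>0$; hence $\process_\submartin=\mint$ is a product of positive numbers and is positive with $\process_\submartin(\init)=1$. The multiplier condition $\uex_\sit(\multprocess(\sit))\leq1$ follows from constant additivity~\ref{prop:coherence:constantadditivity}, non-negative homogeneity~\ref{prop:coherence:homogeneity} (legitimate because $\xi\selection(\sit)\geq0$) and conjugacy:
\[
\uex_\sit(\multprocess(\sit))=1+\xi\selection(\sit)\uex_\sit(-\adddelta\submartin(\sit))=1-\xi\selection(\sit)\lex_\sit(\adddelta\submartin(\sit))\leq1,
\]
where the last step uses that $\submartin$ is a submartingale, i.e.\ $\lex_\sit(\adddelta\submartin(\sit))\geq0$. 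Lemma~\ref{lemma:supermultitestsuper} then yields that $\process_\submartin$ is a (positive) test supermartingale.

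For the exponential lower bound I would pass to logarithms. Setting $y_k\coloneqq-\xi\selection(\xvaltok)\adddelta\submartin(\xvaltok)(\xvalnextk)$, the choice $\xi=\nicefrac{\epsilon}{2B^2}$ with $\epsilon<B$ gives $\abs{y_k}\leq\xi B<\nicefrac12$, so Lemma~\ref{lem:inequality} applies termwise and $\ln(1+y_k)\geq y_k-y_k^2$. Summing, and using $\selection(\xvaltok)^2=\selection(\xvaltok)$ together with $\adddelta\submartin(\xvaltok)(\xvalnextk)^2\leq B^2$, I obtain
\[
\ln\process_\submartin(\xvalto)\geq-\xi\sum_{k=0}^{n-1}\selection(\xvaltok)\adddelta\submartin(\xvaltok)(\xvalnextk)-\xi^2B^2\sum_{k=0}^{n-1}\selection(\xvaltok).
\]
The hypothesis $\avg{\submartin}_\selection(\xvalto)\leq-\epsilon$ forces $\sum_{k=0}^{n-1}\selection(\xvaltok)>0$ and makes the first sum equal to $\avg{\submartin}_\selection(\xvalto)\sum_{k=0}^{n-1}\selection(\xvaltok)\leq-\epsilon\sum_{k=0}^{n-1}\selection(\xvaltok)$. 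Substituting and collecting the coefficient $\xi\epsilon-\xi^2B^2=\nicefrac{\epsilon^2}{4B^2}$ gives $\ln\process_\submartin(\xvalto)\geq\nicefrac{\epsilon^2}{4B^2}\sum_{k=0}^{n-1}\selection(\xvaltok)$, and exponentiating yields the claim.

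Finally, computability is immediate from the multiplier viewpoint: when $\xi$, $\adddelta\submartin$ and $\selection$ are computable, so is the gamble process $\multprocess$, being assembled from them by computable arithmetic, and Proposition~\ref{prop:computable:from:multiplier} then gives that $\process_\submartin=\mint$ is computable. The one place demanding care is the second claim: one must confirm the domain condition $y_k>-\nicefrac12$ needed for Lemma~\ref{lem:inequality} and track the exact cancellation $\nicefrac{\epsilon^2}{2B^2}-\nicefrac{\epsilon^2}{4B^2}=\nicefrac{\epsilon^2}{4B^2}$ in the exponent; the remaining steps reduce to the coherence properties~\ref{prop:coherence:homogeneity}--\ref{prop:coherence:constantadditivity} and the earlier appendix results.
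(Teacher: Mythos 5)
Your proposal is correct and follows essentially the same route as the paper's proof: the same multiplier process $\multprocess_\submartin(\sit)=1-\xi\selection(\sit)\adddelta\submartin(\sit)$ combined with Lemma~\ref{lemma:supermultitestsuper} for the supermartingale claim, the same use of Lemma~\ref{lem:inequality} with the identity $\xi(\epsilon-\xi B^2)=\nicefrac{\epsilon^2}{4B^2}$ for the exponential bound, and Proposition~\ref{prop:computable:from:multiplier} for computability. Your explicit observation that $\avg{\submartin}_\selection(\xvalto)\leq-\epsilon$ forces $\sum_{k=0}^{n-1}\selection(\xvaltok)>0$ is a small point the paper leaves implicit, and is a welcome addition.
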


\begin{proof}{\bf of Lemma~\ref{lem:martinwlln}\quad}
$\process_\submartin(\init)=1$ trivially.
To prove that $\process_\submartin$ is a positive supermartingale, consider any $\sit = \xvaltok \in \sits$, with $k \in \naturalswithzero$.
Then for all $x_{k+1} \in \posspace$, it follows from $0<\xi B<1$, $\vert\adddelta\submartin\vert\leq B$ and $\selection(\xvaltok)\in\outcomes$ that $1-\xi\selection(\xvaltok)\adddelta\submartin(\xvaltok)(\xvalnextk)\geq 1-\xi B>0$. 
This tells us that if we let
\begin{equation}\label{eq:multprocess:submartingale}
\multprocess_\submartin(s)\coloneqq1-\xi\selection(s)\adddelta\submartin(s) \quad \textrm{ for all } \sit \in \sits,
\end{equation}
then $\multprocess_\submartin$ is a positive multiplier process.
Moreover, for all $\sit \in \sits$, since $\xi>0$ and $\selection(s)\in\outcomes$, we infer from \textrm{\ref{prop:coherence:homogeneity}}, \textrm{\ref{prop:coherence:constantadditivity}} and the conjugacy of coherent lower and upper expectations that 
\begin{align*}
\uex_\sit(\multprocess_\submartin(\sit))
&=\uex_\sit\group[\big]{1-\xi\selection(s)\adddelta\submartin(s)}\\
&=1+\uex_\sit\group[\big]{-\xi\selection(s)\adddelta\submartin(s)}
=1-\xi\selection(s)\lex_\sit\group[\big]{\adddelta\submartin(s)}\leq1,
\end{align*}
where the inequality holds since $\submartin$ is a submartingale by assumption, and hence $\lex_\sit\group{\adddelta\submartin(s)}\geq0$.
This shows that $\multprocess_\submartin$ is a positive supermartingale multiplier, and therefore $\process_\submartin=\mint[\multprocess_\submartin]$ is indeed a positive supermartingale due to Lemma~\ref{lemma:supermultitestsuper}.

For the second statement, consider any $0<\epsilon<B$ and let $\xi\coloneqq\frac{\epsilon}{2B^2}$. 
Then for any $n\in\naturalswithzero$ and $\xvalto \in \sits$ such that $\avg{\submartin}_\selection(\xvalto)\leq-\epsilon$, we have for all real $K$:
\begin{align}
\process_\submartin(\xvalto)\geq\exp(K) 
&\overset{\phantom{D_M>0}}{\ifandonlyif}
\prod_{k=0}^{n-1} 
\sqgroup[\big]{1-\xi\selection(\xvaltok)\adddelta\submartin(\xvaltok)(\xvalnextk)}\geq\exp(K)\notag\\
&\overset{D_M>0}{\ifandonlyif}
\sum_{k=0}^{n-1} 
\ln\sqgroup[\big]{1-\xi\selection(\xvaltok)\adddelta\submartin(\xvaltok)(\xvalnextk)}\geq K.\label{eq:K}
\end{align}
Since $\vert\adddelta\submartin\vert\leq B$, $\selection(\xvaltok) \in\outcomes$ for all $k \in \{0,\dots,n-1\}$, and $0<\epsilon<B$, we find that
\begin{equation*}
-\xi\selection(\xvaltok)\adddelta\submartin(\xvaltok)
\geq-\xi B
=-\frac{\epsilon}{2B}
>-\frac{1}{2}
\quad\text{ for $0\leq k\leq n-1$}. 
\end{equation*} 
As $\ln(1+y)\ge y-y^2$ for $y>-\frac{1}{2}$ due to Lemma~\ref{lem:inequality}, this allows us to infer that
\begin{multline*}
\sum_{k=0}^{n-1} 
\ln\sqgroup[\big]{1-\xi\selection(\xvaltok)\adddelta\submartin(\xvaltok)(\xvalnextk)}\\
\begin{aligned}
&\geq
\sum_{k=0}^{n-1} 
\sqgroup[\big]{-\xi\selection(\xvaltok)\adddelta\submartin(\xvaltok)(\xvalnextk)
-\xi^2\selection(\xvaltok)^2(\adddelta\submartin(\xvaltok)(\xvalnextk))^2}\\
&=
-\xi\sum_{k=0}^{n-1}\selection(\xvaltok)\avg{\submartin}_\selection(\xvalto)
-\xi^2\sum_{k=0}^{n-1} 
\selection(\xvaltok)(\adddelta\submartin(\xvaltok)(\xvalnextk))^2\\
&\geq\xi\sum_{k=0}^{n-1}\selection(\xvaltok)\epsilon
-\xi^2\sum_{k=0}^{n-1}\selection(\xvaltok)B^2\\
&=\xi(\epsilon-\xi B^2)\sum_{k=0}^{n-1}\selection(\xvaltok)
=\frac{\epsilon^2}{4B^2}\sum_{k=0}^{n-1}\selection(\xvaltok),
\end{aligned}
\end{multline*}
where the first equality holds because $\selection^2=\selection$. 
Now choose $K\coloneqq\frac{\epsilon^2}{4B^2}\sum_{k=0}^{n-1}\selection(\xvaltok)$ in Equation~\eqref{eq:K}.

It remains to prove the last statement, dealing with the computability of $\process_\submartin$.
Since $\adddelta \submartin$ is computable, it holds for every $x \in \posspace$ that $\adddelta \submartin (\bullet)(x)$ is a computable real process. Since $\xi$ and $\selection$ are computable too, we infer from Equation~\eqref{eq:multprocess:submartingale} that for every $x \in \posspace$,  $\multprocess_\submartin(\bullet)(x)=1-\xi\selection(\bullet)\adddelta\submartin(\bullet)(x)$ is a computable real process. Hence, the multiplier process $\multprocess_\submartin$ is computable.
If we now invoke Proposition~\ref{prop:computable:from:multiplier}, we find that $\process_\submartin=\mint[\multprocess_\submartin]$ is indeed computable.  
\end{proof}

\par \vspace{\topsep}

\begin{proof}{\bf of Theorem~\ref{theor:ChurchRandomness}\quad}
Consider a computable gamble $\gamble \in \gambles$ and a forecasting system $\frcstsystem$ for which $\frcstsystem(\gamble)$ is a computable real process.

We define the real process $\submartin$ by
\begin{equation*}
\submartin(\sit)
\coloneqq\sum_{k=1}^{n}\sqgroup[\big]{f(x_k)-\lex_{x_{1:k-1}}(f)},
\end{equation*}
for all $\sit=\xvalto \in\sits$, with $n\in\naturalswithzero$.
Fix any $\sit \in \sits$.
Then
\begin{align}
\adddelta\submartin(\sit)(x)
= \submartin(\sit x)-\submartin(\sit) 
=f(x)-\lex_\sit(f)
\label{eq:lln:local:increments}
\end{align} 
for all $x \in \posspace$, and therefore $\adddelta\submartin(\sit)=f-\lex_\sit(f)$.
Due to \textrm{\ref{prop:coherence:constantadditivity}}, it follows that
\begin{equation*}
\lex_{\sit}(\adddelta\submartin(\sit))
=\lex_{\sit}(f-\lex_\sit(f))
=\lex_{\sit}(f)-\lex_\sit(f)
=0.
\end{equation*}
Since this is true for every $\sit \in \sits$, the real process $\submartin$ is a submartingale.
Furthermore, since the real process $\lex_\bullet(\gamble)$ and the gamble $\gamble$ are computable, we infer from Equation~\eqref{eq:lln:local:increments} that \smash{$\adddelta \submartin$} is computable as well.

Consider now any path $\pth=(\xvaltolong) \in \pths$ that is computably random for $\frcstsystem$, and any recursive selection process $\selection:\sits\to\{0,1\}$ for which $\lim_{n \to + \infty} \sum_{k=0}^n \break \selection(\xvaltok) = + \infty$ and assume {\itshape ex absurdo} that the inequality in Theorem~\ref{theor:ChurchRandomness} is not satisfied:
\begin{equation*}
\liminf_{n \to + \infty} \avg{\submartin}_\selection(\xvalto)
=\liminf_{n \to + \infty} \frac{\sum_{k=0}^{n-1}\selection(\xvaltok) \big[ \gamble(\xvalnextsum)-\lex_{\xvaltok}(\gamble) \big]}{\sum_{k=0}^{n-1}\selection(\xvaltok)} 
 < 0.
\end{equation*}
We will show that there is a computable non-negative supermartingale that is unbounded above along $\pth$, thereby contradicting the assumed computable randomness of $\pth$ for $\frcstsystem$. 
Let\/ $B\coloneqq\max\{1,\varnorm{f}\}>0$, with $\varnorm{f}=\max f-\min f$. 
Then for all $\sit \in \sits$ and $x \in \posspace$, we infer from Equation~\ref{eq:lln:local:increments} that
\begin{align*}
\abs{\adddelta\submartin(\sit)(x)}
\overset{\phantom{\textrm{\ref{prop:coherence:bounds}}}}{=}\abs{f(x)-\lex_\sit(f)}
\leq \max \gamble - \min \gamble = \varnorm{\gamble} \leq B,
\end{align*}
where the inequality follows from the fact that $\min \gamble \leq \gamble(x) \leq \max \gamble$ and (due to \ref{prop:coherence:bounds}) $\min \gamble \leq \lex_\sit(\gamble) \leq \max \gamble$.
Hence, $\abs{\Delta\submartin}\leq B$.
Since $B > 0$ and $\smash{\liminf_{n \to + \infty} \avg{\submartin}_\selection(\xvalto)}<0$, we can choose some computable $\epsilon$ such that $0 < \epsilon < B$ and $\smash{\liminf_{n \to + \infty} \avg{\submartin}_\selection(\xvalto)} < -\epsilon < 0$. 
Since the gamble $f$ is computable, so is $\varnorm{f}=\max \gamble - \min \gamble$ and therefore also $B$. 
Since $\epsilon$ is computable, the real number $\xi \coloneq \frac{\epsilon}{2 B^2}$ is therefore computable as well.

From Lemma~\ref{lem:martinwlln}, we now infer the existence of a computable---positive and hence definitely---non-negative supermartingale $\process_{\submartin}$ such that for all $n \in \naturalswithzero$,
\begin{align} \label{form:notbounded}
\avg{\submartin}_\selection(\xvalto)\leq-\epsilon
\then
\process_\submartin(\xvalto)
\geq\exp\group[\bigg]{\frac{\epsilon^2}{4B^2}\sum_{k=0}^{n-1}\selection(\xvaltok)}.
\end{align}

Since $\pth$ is computably random for $\frcstsystem$, it follows from Definition~\ref{def:randomsequences} that $\sup_{n \in \naturalswithzero}\process_{\submartin}(\xvalto) < + \infty$ and hence, we can fix an $\alpha > 0$ such that $\process_{\submartin}(\xvalto) < \alpha$ for all $n \in \naturalswithzero$.
Since $\lim_{n \to \infty} \sum_{k=0}^n \selection(\xvaltok)=+\infty$, there is some $m \in \naturalswithzero$ such that \smash{$\sum_{k=0}^{m-1}\selection(\xvaltok) \geq \frac{4B^2}{\epsilon^2} \ln(\alpha)$}.
Since $\liminf_{n \to \infty} \avg{\submartin}_\selection(\xvalto) < -\epsilon$, there is some $n \geq m$, with $n \in \naturalswithzero$, such that \smash{$\avg{\submartin}_\selection(\xvalto)\leq-\epsilon$}. 
For this particular $n \in \naturalswithzero$, since $\frac{\epsilon^2}{4B^2} \geq 0$, \smash{$\sum_{k=0}^{n-1}\selection(\xvaltok) \geq \sum_{k=0}^{m-1}\selection(\xvaltok) \geq \frac{4B^2}{\epsilon^2} \ln(\alpha)$} and $\alpha>0$, it follows from Equation~\eqref{form:notbounded} that 
\begin{equation*}
\process_{\submartin}(x_{1:n})
\geq \exp\group[\bigg]{\frac{\epsilon^2}{4B^2}\sum_{k=0}^{n-1}\selection(\xvaltok)} 
\geq  \exp\group[\bigg]{\frac{\epsilon^2}{4B^2} \frac{4B^2}{\epsilon^2} \ln(\alpha)}
\geq \alpha,
\end{equation*}
which clearly contradicts $\process_{\submartin}(\xvalto) < \alpha$.
\end{proof}

\subsection*{Proofs and additional material for Section~\ref{sec:stat}}
\label{app:stat}

In order to state and prove some of the next results, we will use a particular way of constructing coherent lower expectations. For every gamble $\gamble \in \gambles$, every real $\gamma$ such that $\min \gamble \leq \gamma \leq \max \gamble$ and every closed interval $I \in \mathcal{I}_\gamble$ such that $I \subseteq \sqgroup{\min \gamble, \max \gamble}$, consider the operators $\lex_{\gamma,\gamble}$ and $\lex_{I,\gamble}$ defined by
\begin{align} \label{form:Ef}
\lex_{\gamma,\gamble}(g) \coloneq \max\Big\{\min\big(g-\mu(\gamble-\gamma)\big):\mu \geq 0 \Big\} 
\end{align}
and
\begin{align}
\lex_{I,\gamble}(g)&\coloneq\max\big\{\lex_{\min I,\gamble}(g), \lex_{-\max I,-\gamble}(g) \big\} \label{form:Ef-f}
\end{align}
for all $g \in \gambles$, respectively.

\begin{proposition} \label{prop:Ef}
For any gamble $\gamble \in \gambles$ and any real-valued $\gamma \in \reals$ such that $\min \gamble \leq \gamma \leq \max \gamble$, \smash{$\lex_{\gamma,\gamble}$} is a coherent lower expectation that satisfies $\lex_{\gamma,\gamble}(\gamble)=\gamma$ and $\smash{\uex_{\gamma,\gamble}(\gamble)}=\max\gamble$.
Moreover, for every coherent lower expectation $\lex'$ such that $\gamma \leq \lex'(\gamble)$, it holds that $\lex_{\gamma, \gamble} \leq \lex'$. 
\end{proposition}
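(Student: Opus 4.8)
The plan is to work directly from the explicit $\max$--$\min$ formula~\eqref{form:Ef}, recognising $\lex_{\gamma,\gamble}$ as the natural extension of the single assessment that the gamble $\gamble-\gamma$ is acceptable. Before anything else I would check that $\lex_{\gamma,\gamble}$ is genuinely real-valued, that is, that the maximum over $\mu\ge0$ is attained and finite, since a coherent lower expectation must take values in $\reals$. For fixed $g$, the map $\mu\mapsto\min_{x\in\posspace}[g(x)-\mu(\gamble(x)-\gamma)]$ is concave (a minimum of affine functions) and equals $\min g$ at $\mu=0$. When $\gamma<\max\gamble$ it tends to $-\infty$ as $\mu\to+\infty$ (the coordinate $x$ with $\gamble(x)=\max\gamble$ drives it down), so a finite maximiser exists; when $\gamma=\max\gamble$ every affine piece is non-decreasing in $\mu$ and the envelope plateaus at $\min\{g(x):\gamble(x)=\gamma\}$, which is again attained at some finite $\mu$. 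This attainment is the one genuinely technical point, and I would dispatch it first so that later I may freely write ``let $\mu^\ast$ achieve the maximum''.

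Next I would verify the three coherence axioms straight from the formula. Axiom~\ref{axiom:coherence:bounds} is immediate by taking $\mu=0$. For \ref{axiom:coherence:homogeneity} with $\alpha>0$ I would substitute $\nu=\mu/\alpha$ to pull $\alpha$ outside both the inner $\min$ and the outer $\max$; the case $\alpha=0$ reduces to checking $\max_{\mu\ge0}\min_x[-\mu(\gamble(x)-\gamma)]=0$, which holds because $\min_x[-\mu(\gamble(x)-\gamma)]=-\mu(\max\gamble-\gamma)\le0$ for $\mu\ge0$. For \ref{axiom:coherence:subadditivity} I would take maximisers $\mu_1,\mu_2$ for $g$ and $h$ and use $\min a+\min b\le\min(a+b)$ together with $\mu_1+\mu_2\ge0$ to conclude $\lex_{\gamma,\gamble}(g)+\lex_{\gamma,\gamble}(h)\le\min_x[(g+h)-(\mu_1+\mu_2)(\gamble-\gamma)]\le\lex_{\gamma,\gamble}(g+h)$.

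For the two identities I would evaluate the formula at $g=\gamble$ and $g=-\gamble$ by a short case analysis on $\mu$. Writing $\min_x[(1-\mu)\gamble(x)+\mu\gamma]$, the choice $\mu=1$ already gives $\gamma$, while for $0\le\mu<1$ the inner minimum equals $(1-\mu)\min\gamble+\mu\gamma\le\gamma$ and for $\mu>1$ it equals $(1-\mu)\max\gamble+\mu\gamma\le\gamma$ (both using $\min\gamble\le\gamma\le\max\gamble$); hence $\lex_{\gamma,\gamble}(\gamble)=\gamma$. For the conjugate, $\min_x[-\gamble(x)-\mu(\gamble(x)-\gamma)]=-(1+\mu)\max\gamble+\mu\gamma=-\max\gamble+\mu(\gamma-\max\gamble)$, which is maximised at $\mu=0$ because $\gamma\le\max\gamble$; this gives $\lex_{\gamma,\gamble}(-\gamble)=-\max\gamble$ and therefore $\uex_{\gamma,\gamble}(\gamble)=\max\gamble$.

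Finally, for the dominance claim I would exploit the natural-extension structure. Fixing $g$ and a maximiser $\mu^\ast$, the defining inequality of the inner minimum reads $g\ge\lex_{\gamma,\gamble}(g)+\mu^\ast(\gamble-\gamma)$ pointwise on $\posspace$. Applying any coherent $\lex'$ and using increasingness~\ref{prop:coherence:increasingness}, constant additivity~\ref{prop:coherence:constantadditivity}, and homogeneity~\ref{axiom:coherence:homogeneity}, I obtain $\lex'(g)\ge\lex_{\gamma,\gamble}(g)-\mu^\ast\gamma+\mu^\ast\lex'(\gamble)$, and the hypothesis $\gamma\le\lex'(\gamble)$ together with $\mu^\ast\ge0$ collapses the right-hand side to $\lex_{\gamma,\gamble}(g)$. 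As $g$ is arbitrary this yields $\lex_{\gamma,\gamble}\le\lex'$. Should attainment of $\mu^\ast$ ever be delicate in the border case $\gamma=\max\gamble$, the same computation carried out with an $\varepsilon$-maximiser and a passage to the limit works verbatim, so I do not expect it to cause real trouble.
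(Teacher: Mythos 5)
Your proof is correct, and its skeleton (real-valuedness, axioms \ref{axiom:coherence:bounds}--\ref{axiom:coherence:subadditivity}, the two identities, then dominance) coincides with the paper's; in particular your homogeneity, superadditivity, conjugacy and dominance steps are in substance the paper's own (the paper proves dominance by applying \ref{axiom:coherence:bounds}, \ref{axiom:coherence:homogeneity} and \ref{axiom:coherence:subadditivity} to $\lex'$ under the max sign, whereas you apply increasingness, constant additivity and homogeneity to a pointwise inequality at a maximiser $\mu^\ast$; these are the same natural-extension argument in two phrasings). You genuinely depart from the paper in two places, both to your advantage. First, for $\lex_{\gamma,\gamble}(\gamble)=\gamma$ the paper does not evaluate the formula directly: it constructs a probability mass function $p$ with $\ex_p(\gamble)=\gamma$ (a mixture of point masses at a minimiser and a maximiser of $\gamble$, with a separate case when $\gamble$ is constant), observes that $\ex_p$ is itself a coherent lower expectation, and then invokes the already-proved dominance claim to get $\lex_{\gamma,\gamble}(\gamble)\leq\gamma$, pairing this with the lower bound obtained at $\mu=1$. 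Your direct case split on $\mu$ ($\mu=1$ gives $\gamma$; $0\leq\mu<1$ and $\mu>1$ both give values at most $\gamma$, using $\min\gamble\leq\gamma\leq\max\gamble$) is shorter, more elementary, and avoids that detour entirely. Second, you explicitly prove that the maximum in Equation~\eqref{form:Ef} is attained (concavity of the envelope, decay to $-\infty$ when $\gamma<\max\gamble$, plateau at $\min\{g(x):\gamble(x)=\gamma\}$ when $\gamma=\max\gamble$). The paper never addresses attainment---it implicitly reads the max as a supremum and only sandwiches its value between $\min g$ and $g(x)$ for a suitable $x$---so your argument closes a small gap the paper leaves open, and it is precisely what licenses your later use of genuine maximisers (your $\varepsilon$-maximiser fallback would serve equally well where attainment is not needed).
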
 

\begin{proof}{\bf of Proposition~\ref{prop:Ef}\quad}
To prove that the operator $\lex_{\gamma,\gamble}$ is a coherent lower expectation, we show that it is real-valued and satisfies \textrm{\ref{axiom:coherence:bounds}}--\textrm{\ref{axiom:coherence:subadditivity}}.

To prove \ref{axiom:coherence:bounds}, we
consider any $g \in \gambles$ and show that $\smash{\lex_{\gamma,\gamble}(g)} \geq \min g$. For $\mu = 0$, the right-hand side of Equation \eqref{form:Ef} equals $\min g$. Since we take the maximum for $\mu\geq0$, it follows that $\lex_{\gamma,\gamble}(g) \geq \min g$.

To prove that $\lex_{\gamma,\gamble}$ is a real-valued operator, we consider any $g \in \gambles$ and prove that $\lex_{\gamma,\gamble}(g)$ is real-valued. 
Because of \ref{axiom:coherence:bounds}, we already know that $\lex_{\gamma,\gamble}(g)$ is bounded below by $\min g$. 
To establish the real-valuedness of $\lex_{\gamma,\gamble}(g)$, it therefore suffices to establish a real upper bound as well.
Since $\gamma \leq \max \gamble$ by assumption, we can fix an $x \in \posspace$ such that $\gamble(x) - \gamma \geq 0$. Then it holds that
\begin{align*}
\lex_{\gamma,\gamble}(g)
=\max\Big\{\min\big(g-\mu(\gamble-\gamma)\big):\mu\geq 0 \Big\} 
&\leq \max\big\{g(x)-\mu(\gamble(x)-\gamma):\mu\geq 0 \big\} \\
&= g(x)-0(\gamble(x)-\gamma) 
= g(x).
\end{align*}

To prove \ref{axiom:coherence:homogeneity}, we consider any gamble $g \in \gambles$ and $\alpha\geq0$ and prove that $\smash{\lex_{\gamma,\gamble}(\alpha g)} = \alpha \smash{\lex_{\gamma,\gamble}(g)}$.
If $\alpha = 0$, then
\begin{align*}
\lex_{\gamma,\gamble}(\alpha g)=\lex_{\gamma,\gamble}(0)&=\max\Big\{\min\big(-\mu(\gamble-\gamma)\big):\mu\geq 0 \Big\} \\
&=\max\Big\{\mu\big(\gamma-\max \gamble \big):\mu\geq 0 \Big\}
=0 
= \alpha \lex_{\gamma,\gamble}(g),
\end{align*}
because $\gamma \leq \max \gamble$ by assumption. If $\alpha > 0$, then
\begin{align*}
\lex_{\gamma,\gamble}(\alpha g) &= \max\Big\{\min\big(\alpha g-\mu(\gamble-\gamma)\big):\mu\geq 0 \Big\} \\
&= \max\Big\{\min\big(\alpha g-\alpha \beta(\gamble-\gamma)\big): \beta \geq 0 \Big\} \\
&= \max\Big\{\alpha \min\big( g- \beta(\gamble-\gamma)\big): \beta \geq 0 \Big\} \\
&= \alpha \max\Big\{\min\big( g- \beta(\gamble-\gamma)\big): \beta \geq 0 \Big\} 
=\alpha \lex_{\gamma,\gamble}(g).
\end{align*}

To prove \ref{axiom:coherence:subadditivity}, we consider any $g,h \in \gambles$ and show that $\lex_{\gamma,\gamble}(g+h) \geq \lex_{\gamma,\gamble}(g) + \lex_{\gamma,\gamble}(h)$:
\begin{align*}
\lex_{\gamma,\gamble}(g+h) &= \max\Big\{\min\big(g+h-\mu(\gamble-\gamma)\big):\mu \geq 0 \Big\} \\
&= \max\Big\{\min\big(g+h-(\mu_1+\mu_2)(\gamble-\gamma)\big):\mu_1,\mu_2 \geq 0 \Big\} \\
&= \max\Big\{\min\big(g - \mu_1(\gamble-\gamma)+ h-\mu_2(\gamble-\gamma)\big):\mu_1,\mu_2 \geq 0 \Big\} \\
&\geq \max\Big\{\min\big(g - \mu_1(\gamble-\gamma)\big) + \min\big(h-\mu_2(\gamble-\gamma)\big):\mu_1,\mu_2 \geq 0 \Big\} \\
&= \max\Big\{\min\big(g-\mu_1(\gamble-\gamma)\big):\mu_1\geq 0 \Big\} \\
&\phantom{= \textrm{}}+ \max\Big\{\min\big(h-\mu_2(\gamble-\gamma)\big):\mu_2 \geq 0 \Big\} \\
&=\lex_{\gamma,\gamble}(g)+\lex_{\gamma,\gamble}(h).
\end{align*}
Hence, $\lex_{\gamma, \gamble}$ is indeed a coherent lower expectation.

Next, consider any coherent lower expectation $\lex'$ such that $\gamma \leq \lex'(\gamble)$. For any $g \in \gambles$, it then follows that
\begin{align*}
\lex_{\gamma,\gamble}(g)&\overset{\phantom{P1}}{=}\max\Big\{\min\big(g-\mu(\gamble-\gamma)\big):\mu\geq 0 \Big\} \\
&\overset{\phantom{\textrm{\ref{axiom:coherence:bounds}}}}{=}\max\Big\{\min(g-\mu\gamble)+\mu\gamma:\mu\geq 0 \Big\} \\
&\overset{\textrm{\textrm{\ref{axiom:coherence:bounds}}}}{\leq}\max\Big\{\lex'(g-\mu\gamble)+\mu\lex'(\gamble):\mu\geq 0 \Big\} \\
&\overset{\textrm{\textrm{\ref{axiom:coherence:homogeneity}}}}{=}\max\Big\{\lex'(g-\mu\gamble)+\lex'(\mu\gamble):\mu\geq 0 \Big\} \\
&\overset{\textrm{\textrm{\ref{axiom:coherence:subadditivity}}}}{\leq}\max\Big\{\lex'(g-\mu\gamble+\mu\gamble):\mu\geq 0 \Big\} 
=\lex'(g).
\end{align*}
Hence, \smash{$\lex_{\gamma,\gamble} \leq \lex'$}. 

Finally, we prove that $\lex_{\gamma,\gamble}(\gamble)=\gamma$ and $\uex_{\gamma,\gamble}(\gamble)=\max\gamble$. 
Since $\min \gamble \leq \gamma \leq \max \gamble$, there is a probability mass function $p$ on $\posspace$ such that $\ex_p(\gamble)=\gamma$. 
To show this, we consider two cases: $\min \gamble < \max \gamble$ and $\min \gamble = \max \gamble$. 
If $\min \gamble < \max \gamble$, let $\lambda \coloneq \frac{\max \gamble - \gamma}{\max \gamble - \min \gamble} \in \sqgroup{0,1}$ and $p\coloneq\lambda \mathbb{I}_{\{x_1\}} + (1-\lambda) \mathbb{I}_{\{x_2\}}$, with $x_1,x_2 \in \posspace$ such that $\gamble(x_1)=\min \gamble$ and $\gamble(x_2)=\max \gamble$.
Then it holds that 
\begin{align*}
\ex_p(\gamble)
=\smash{\sum_{x \in \posspace}}p(x)f(x)
&=\lambda \gamble(x_1)+(1-\lambda) \gamble(x_2)\phantom{\sum} \\ 
&=\lambda \min \gamble+(1-\lambda) \max \gamble \\
&= \frac{\max \gamble - \gamma}{\max \gamble - \min \gamble} \min \gamble + \frac{\gamma - \min \gamble}{\max \gamble - \min \gamble} \max \gamble= \gamma.
\end{align*} 
If $\min \gamble = \max \gamble=\gamma$, any probability mass function $p$ on $\posspace$ will do since then $\ex_p(\gamble)=\sum_{x \in \posspace}p(x)f(x)= \gamma \sum_{x \in \posspace}p(x) = \gamma$. 
If we now let $\lex' \coloneq \ex_p$, then $\lex'$ is a coherent lower expectation such that $\lex'(\gamble)=\ex_p(\gamble)=\gamma$, and it therefore follows from the previous part of the proof that \smash{$\lex_{\gamma,\gamble} \leq \lex'$}, with in particular \smash{$\lex_{\gamma,\gamble}(\gamble) \leq \lex'(\gamble) = \ex_p(\gamble)=\gamma$}.
Since also $\lex_{\gamma,\gamble}(\gamble) = \max\{\min(\gamble-\mu(\gamble-\gamma)):\mu \geq 0 \} \geq \min(\gamble-1(\gamble-\gamma)) = \gamma$, it follows that $\lex_{\gamma,\gamble}(\gamble)=\gamma$. 
For \smash{$\uex_{\gamma,\gamble}(\gamble)$}, it holds that
\begin{align*}
\uex_{\gamma,\gamble}(\gamble) 
= -\lex_{\gamma,\gamble}(-\gamble)
&=-\max\Big\{\min\big(-f-\mu(\gamble-\gamma)\big):\mu\geq 0 \Big\} \\
&=\min\Big\{\max\big((1+\mu)f-\mu\gamma\big):\mu\geq 0 \Big\} \\
&=\min\Big\{(1+\mu)\max f-\mu\gamma:\mu\geq 0 \Big\} \\
&=\min\Big\{\max f+\mu\big(\max f -\gamma\big):\mu\geq 0 \Big\} 
= \max f,
\end{align*}
where the last equality holds because the minimum is obtained for $\mu = 0$ since $\max f \geq \gamma$ by assumption.
\end{proof}

\par \vspace{\topsep}

\begin{proof}{\bf of Corollary~\ref{cor:existence}\quad}
This is an immediate consequence of Proposition~\ref{prop:existence} and the definition of computable randomness with respect to a stationary forecasting system.
\end{proof}

\par \vspace{\topsep}

\begin{proof}{\bf of Corollary~\ref{cor:VacuousE}\quad}
This is an immediate consequence of Proposition~\ref{prop:VacuousPhi}, the definition of computable randomness with respect to a stationary forecasting system, and the definition of the vacuous forecasting system $\frcstsystem_{,v}$.
\end{proof}

\par \vspace{\topsep}

\begin{proof}{\bf of Corollary~\ref{cor:monotonicityStationary}\quad}
This is an immediate consequence of Proposition~\ref{prop:monotonicity} and the definition of computable randomness with respect to a stationary forecasting system.
\end{proof}

\par \vspace{\topsep}

\begin{proof}{\bf of Corollary~\ref{cor:ChurchRandomnessStationary}\quad}
Consider a path $\pth \allowbreak = \allowbreak (\xvaltolong,\allowbreak \dots) \in \pths$, a coherent lower expectation $\lex \in \frcstsystemscompstat$, a gamble $\gamble$ and a recursive selection process $\selection$ for which $\lim_{n \to + \infty}\sum_{k=0}^n \selection(\xvaltok) = + \infty$.
Fix any $\epsilon >0$.
Since the computable numbers are dense in the reals, there is a computable gamble $\gamble'$ such that $\gamble - \epsilon \leq \gamble' \leq \gamble$.
Since $\min \gamble'$ is a computable real number and since $\min \gamble' \leq \lex(\gamble') \leq \max \gamble'$ due to \textrm{\ref{prop:coherence:bounds}}, there is a computable real number $\gamma$ such that $\min \gamble' \leq \gamma \leq \lex(\gamble') \leq \max \gamble'$ and $\lex(\gamble')-\epsilon < \gamma$. 
According to Proposition~\ref{prop:Ef}, $\lex_{\gamma,\gamble'}$ is a coherent lower expectation such that $\lex_{\gamma, \gamble'}(\gamble')=\gamma$ and, since $\gamma \leq \lex(\gamble')$, Proposition~\ref{prop:Ef} also implies that $\lex_{\gamma,\gamble'} \leq \lex$. 
Since $\lex_{\gamma,\gamble'} \leq \lex$, it follows from  Corollary~\ref{cor:monotonicityStationary} that since $\pth$ is computably random for $\lex$, it is computably random for $\lex_{\gamma,\gamble'}$ as well. 
Consider now the stationary forecasting system $\lex'_\bullet$ defined by $\lex'_\sit=\lex_{\gamma,\gamble'}$ for all $\sit \in \sits$. 
Since  $\lex_{\gamma,\gamble'}(\gamble')=\gamma$ is a computable real number, $\lex'_\bullet(\gamble')$ is then a computable real process.
Furthermore, since $\pth$ is computably random for $\lex_{\gamma,\gamble'}$, it is also computably random for $\lex'_\bullet$.
Since $\gamble'$ is computable and $\lim_{n \to + \infty}\sum_{k=0}^n \selection(\xvaltok) = + \infty$, it therefore follows from Theorem~\ref{theor:ChurchRandomness} that
\begin{align*}
&\phantom{\ifandonlyif} \textrm{ } \liminf_{n \to + \infty} \frac{\sum_{k=0}^{n-1}\selection(\xvaltok) \big[ \gamble'(\xvalnextsum)-\lex'_{\xvaltok}(\gamble') \big]}{\sum_{k=0}^{n-1}\selection(\xvaltok)} \geq 0 \\
&\ifandonlyif \liminf_{n \to + \infty} \frac{\sum_{k=0}^{n-1}\selection(\xvaltok) \big[ \gamble'(\xvalnextsum)-\lex_{\gamma,\gamble'}(\gamble') \big]}{\sum_{k=0}^{n-1}\selection(\xvaltok)} \geq 0 \\
&\ifandonlyif \liminf_{n \to + \infty} \frac{\sum_{k=0}^{n-1}\selection(\xvaltok) \gamble'(\xvalnextsum)}{\sum_{k=0}^{n-1}\selection(\xvaltok)} -\lex_{\gamma,\gamble'}(\gamble')  \geq 0 \\
&\ifandonlyif
\lex_{\gamma,\gamble'}(\gamble')
\leq \liminf_{n\to+\infty}
\frac{\sum_{k=0}^{n-1}S(\xvaltok)\gamble'(x_{k+1})}{\sum_{k=0}^{n-1}S(\xvaltok)} \leq \liminf_{n\to+\infty}
\frac{\sum_{k=0}^{n-1}S(\xvaltok)\gamble(x_{k+1})}{\sum_{k=0}^{n-1}S(\xvaltok)},
\end{align*}
where the last inequality holds because $\gamble' \leq \gamble$.
By recalling that $\lex_{\gamma,\gamble'}(\gamble')=\gamma$, $\gamma > \lex(\gamble')-\epsilon$ and $\gamble' \geq \gamble - \epsilon$, it is easy to see that
\[
\lex_{\gamma,\gamble'}(\gamble')
=\gamma
> \lex(\gamble')-\epsilon
\overset{\textrm{\ref{prop:coherence:increasingness}}}{\geq} \lex(\gamble - \epsilon) - \epsilon
\overset{\textrm{\ref{prop:coherence:constantadditivity}}}{=}\lex(\gamble)-2\epsilon.
\]
Hence, 
\[
\lex(\gamble)-2\epsilon \leq \liminf_{n\to+\infty}
\frac{\sum_{k=0}^{n-1}S(\xvaltok)\gamble(x_{k+1})}{\sum_{k=0}^{n-1}S(\xvaltok)}.
\]
Since this inequality holds for any $\epsilon >0$, it also holds for $\epsilon = 0$, which establishes the first inequality of the statement. 
The second inequality in the statement of Corollary~\ref{cor:ChurchRandomnessStationary} is a standard property of limits inferior and superior. The third inequality follows from the first inequality and the conjugacy of coherent lower and upper expectations by considering the gamble $-\gamble$:
\begin{align*}
&\phantom{\ifandonlyif} \lex(-\gamble) \leq \liminf_{n\to+\infty}
\frac{\sum_{k=0}^{n-1}S(\xvaltok)\big(-\gamble(x_{k+1})\big)}{\sum_{k=0}^{n-1}S(\xvaltok)}\\
&\ifandonlyif -\uex(\gamble) \leq -\limsup_{n\to+\infty}
\frac{\sum_{k=0}^{n-1}S(\xvaltok)\gamble(x_{k+1})}{\sum_{k=0}^{n-1}S(\xvaltok)}\\
&\ifandonlyif \limsup_{n\to+\infty}
\frac{\sum_{k=0}^{n-1}S(\xvaltok)\gamble(x_{k+1})}{\sum_{k=0}^{n-1}S(\xvaltok)} \leq \uex(\gamble).
\end{align*}
\end{proof}

\subsection*{Proofs and additional material for Section~\ref{sec:intervals}}
\label{app:intervals}

\begin{proposition} \label{prop:Ef-f}
For any gamble $\gamble \in \gambles$ and any interval $I \in \mathcal{I}_\gamble$, \smash{$\lex_{I,\gamble}$} is a coherent lower expectation that satisfies $\smash{\lex_{I,\gamble}}(\gamble)=\min I$ and $\smash{\uex_{I,\gamble}}(\gamble)=\max I$.
Moreover, for every coherent lower expectation $\lex'$ such that $\min I \leq \smash{\lex'}(\gamble)$ and $\smash{\uex'}(\gamble) \leq \max I$, it holds that \smash{$\lex_{I, \gamble} \leq \lex'$}. 
\end{proposition}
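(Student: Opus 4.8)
The plan is to build $\lex_{I,\gamble}$ out of the two operators supplied by Proposition~\ref{prop:Ef} and to transfer as much as possible from that proposition. Write $\li\coloneq\min I$ and $\ui\coloneq\max I$, so that $\min\gamble\leq\li\leq\ui\leq\max\gamble$ because $I\in\mathcal{I}_\gamble$. Proposition~\ref{prop:Ef} applies to $\gamble$ with $\gamma=\li$ (since $\min\gamble\leq\li\leq\max\gamble$) and to the gamble $-\gamble$ with $\gamma=-\ui$ (since $\min(-\gamble)\leq-\ui\leq\max(-\gamble)$ is just $\min\gamble\leq\ui\leq\max\gamble$). Hence both $\lex_{\li,\gamble}$ and $\lex_{-\ui,-\gamble}$ are coherent lower expectations, and by \eqref{form:Ef-f} their pointwise maximum is exactly $\lex_{I,\gamble}$.

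The \emph{moreover} part and the two values then come almost for free. For any coherent $\lex'$ with $\li\leq\lex'(\gamble)$ and $\uex'(\gamble)\leq\ui$, the first inequality gives $\lex_{\li,\gamble}\leq\lex'$ by Proposition~\ref{prop:Ef}; rewriting $\uex'(\gamble)\leq\ui$ as $-\ui\leq\lex'(-\gamble)$ and applying the same proposition to $-\gamble$ gives $\lex_{-\ui,-\gamble}\leq\lex'$, so taking the maximum yields $\lex_{I,\gamble}\leq\lex'$. For the two values I would evaluate each branch directly from \eqref{form:Ef}: Proposition~\ref{prop:Ef} gives $\lex_{\li,\gamble}(\gamble)=\li$, while a one-line computation gives $\lex_{-\ui,-\gamble}(\gamble)=\min\gamble$ (the maximum over $\mu\geq0$ of $(1+\mu)\min\gamble-\mu\ui$ is attained at $\mu=0$ since $\min\gamble\leq\ui$), so $\lex_{I,\gamble}(\gamble)=\max\{\li,\min\gamble\}=\li=\min I$. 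Dually $\lex_{-\ui,-\gamble}(-\gamble)=-\ui$ and $\lex_{\li,\gamble}(-\gamble)=-\max\gamble$, whence $\uex_{I,\gamble}(\gamble)=-\max\{-\max\gamble,-\ui\}=\ui=\max I$.

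It remains to show that $\lex_{I,\gamble}$ is coherent. Boundedness \ref{axiom:coherence:bounds} and non-negative homogeneity \ref{axiom:coherence:homogeneity} are immediate from the maximum structure, since each branch satisfies them and $\max$ commutes with multiplication by $\alpha\geq0$. The only genuine obstacle is superadditivity \ref{axiom:coherence:subadditivity}: the pointwise maximum of two coherent lower expectations is in general \emph{not} superadditive, so one cannot argue branchwise when the two optima ``cross''. To overcome this I would rewrite $\lex_{I,\gamble}$ as a single optimisation,
\[
\lex_{I,\gamble}(g)=\max_{\lambda\in\reals}\,\min_{x\in\posspace}\bigl(g(x)-\lambda\gamble(x)+c(\lambda)\bigr),\qquad c(\lambda)\coloneq\li\max\{\lambda,0\}+\ui\min\{\lambda,0\},
\]
which matches \eqref{form:Ef-f} because the $\lambda\geq0$ range reproduces $\lex_{\li,\gamble}$ (set $\mu=\lambda$) and the $\lambda\leq0$ range reproduces $\lex_{-\ui,-\gamble}$ (set $\nu=-\lambda$). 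The crucial point is that $c$ is positively homogeneous and, since $\li\leq\ui$ makes its slope drop from $\ui$ to $\li$ at the origin, concave; a positively homogeneous concave function is superadditive, i.e.\ $c(\lambda_g)+c(\lambda_h)\leq c(\lambda_g+\lambda_h)$. Choosing maximisers $\lambda_g,\lambda_h$ for $g,h$ and combining $\min u+\min v\leq\min(u+v)$ with this superadditivity bounds $\lex_{I,\gamble}(g)+\lex_{I,\gamble}(h)$ by the value of the inner expression at $\lambda_g+\lambda_h$, which is at most $\lex_{I,\gamble}(g+h)$. I expect establishing \ref{axiom:coherence:subadditivity} through this concave homogeneous penalty to be the main obstacle; the same estimate can alternatively be carried out as an elementary case analysis on whether each of $g,h$ attains its maximum on the lower or the upper branch, which sidesteps any attainment concerns by using only the branch maximisers already provided by Proposition~\ref{prop:Ef}.
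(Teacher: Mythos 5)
Your proposal is correct, and its skeleton coincides with the paper's: both decompose $\lex_{I,\gamble}$ via Proposition~\ref{prop:Ef} into the branches $\lex_{\min I,\gamble}$ and $\lex_{-\max I,-\gamble}$, read off real-valuedness, \ref{axiom:coherence:bounds} and \ref{axiom:coherence:homogeneity} branchwise, obtain the dominance statement by applying Proposition~\ref{prop:Ef} to $\gamble$ with $\gamma=\min I$ and to $-\gamble$ with $\gamma=-\max I$, and compute $\lex_{I,\gamble}(\gamble)=\max\{\min I,\min\gamble\}=\min I$ and $\uex_{I,\gamble}(\gamble)=\min\{\max\gamble,\max I\}=\max I$ (you do the two auxiliary evaluations by direct computation from Equation~\eqref{form:Ef}, the paper via $\uex_{\gamma,\gamble}(\gamble)=\max\gamble$ and conjugacy; these are interchangeable).

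Where you genuinely diverge is the crux, superadditivity \ref{axiom:coherence:subadditivity}, and you correctly identify that a naive branchwise argument fails. The paper proceeds by case analysis: if $g$ and $h$ attain the maximum in Equation~\eqref{form:Ef-f} on the same branch, branchwise superadditivity suffices; otherwise it takes explicit multipliers $\mu,\lambda\geq0$ from the two branches, splits again according to $\mu\geq\lambda$ or $\lambda\geq\mu$, and absorbs the non-negative cross term $\lambda(\max I-\min I)$ (resp.\ $\mu(\max I - \min I)$) so as to land back inside a single branch. Your route instead rewrites $\lex_{I,\gamble}(g)=\max_{\lambda\in\reals}\bigl[\min(g-\lambda\gamble)+c(\lambda)\bigr]$ with $c(\lambda)=\min I\cdot\max\{\lambda,0\}+\max I\cdot\min\{\lambda,0\}$, notes that $c$ is positively homogeneous and concave precisely because $\min I\leq\max I$, and deduces superadditivity of $\lex_{I,\gamble}$ from superadditivity of $c$ together with $\min u+\min v\leq\min(u+v)$. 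The two arguments are computationally the same inequality---the term the paper drops equals exactly the superadditivity defect $c(\lambda_g+\lambda_h)-c(\lambda_g)-c(\lambda_h)$---but your packaging is more conceptual: it exhibits $\lex_{I,\gamble}$ as a single Lagrangian-style concave optimisation and makes visible why the interval structure is what saves coherence, whereas the paper's version is more elementary and introduces no auxiliary function. One shared technical point you flag: both arguments pick maximisers, and this is legitimate, since for $\gamma\leq\max\gamble$ the inner expression in Equation~\eqref{form:Ef} is a piecewise-linear concave function of $\mu$ that is eventually non-increasing, so the suprema over $\mu\geq0$ (hence also over $\lambda\in\reals$ in your reformulation) are attained; your fallback case analysis is therefore not needed, though it would reproduce the paper's proof almost verbatim.
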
 

\begin{proof}{\bf of Proposition~\ref{prop:Ef-f}\quad} 
To prove that $\lex_{I,\gamble}$ is a coherent lower expectation, we show that it is real-valued and satisfies \textrm{\ref{axiom:coherence:bounds}}--\textrm{\ref{axiom:coherence:subadditivity}}. Since $\min \gamble \leq \min I \leq \max \gamble$ and $\min (-\gamble) \leq -\max I \leq \max (-\gamble)$ (because $\max \gamble \geq \max I \geq \min \gamble$), it follows from Proposition~\ref{prop:Ef} that \smash{$\lex_{\min I,\gamble}$} and \smash{$\lex_{-\max I,-\gamble}$} are coherent lower expectations. 
By inspecting Equation~\eqref{form:Ef-f}, we then see that $\lex_{I,\gamble}$ is real-valued and satisfies \textrm{\ref{axiom:coherence:bounds}} and \textrm{\ref{axiom:coherence:homogeneity}} trivially because \smash{$\lex_{\min I,\gamble}$} and \smash{$\lex_{-\max I,-\gamble}$} are real-valued and satisfy \textrm{\ref{axiom:coherence:bounds}}--\textrm{\ref{axiom:coherence:homogeneity}}.
To prove that \smash{$\lex_{I,\gamble}$} also satisfies \textrm{\ref{axiom:coherence:subadditivity}}, we consider any two gambles $g,h \in \gambles$ and prove that $\lex_{I,\gamble}(g+h) \geq \lex_{I,\gamble}(g)+\lex_{I,\gamble}(h)$.
If $\lex_{I,\gamble}(g)=\lex_{\min I,\gamble}(g)$ and $\lex_{I,\gamble}(h)=\lex_{\min I,\gamble}(h)$, or $\lex_{I,\gamble}(g)=\lex_{-\max I,-\gamble}(g)$ and $\lex_{I,\gamble}(h) \allowbreak =\lex_{-\max I,-\gamble}(h)$, it follows from Equation~\eqref{form:Ef-f} that
\begin{align*}
\lex_{I,\gamble}(g+h)
&\overset{\phantom{\textrm{\ref{axiom:coherence:subadditivity}}}}{=}\max\big\{\lex_{\min I,\gamble}(g+h), \lex_{-\max I,-\gamble}(g+h) \big\} \\
&\overset{\textrm{\ref{axiom:coherence:subadditivity}}}{\geq}\max\big\{\lex_{\min I,\gamble}(g)+\lex_{\min I,\gamble}(h), \lex_{-\max I,-\gamble}(g)+\lex_{-\max I,-\gamble}(h) \big\} \\
&\overset{\phantom{\textrm{\ref{axiom:coherence:subadditivity}}}}{\geq} \lex_{I,\gamble}(g)+\lex_{I,\gamble}(h).
\end{align*}
Consequently, we may assume without loss of generality that $\smash{\lex_{I,\gamble}\allowbreak(g) \allowbreak} = \break \lex_{\min I,\gamble}(g)$ and $\lex_{I,\gamble}(h)=\lex_{-\max I,-\gamble}(h)$ (if not, just swap the roles of $g$ and $h$). 
Then, due to Equation~\eqref{form:Ef}, we know that there is a $\mu \geq 0$ and a $\lambda \geq 0$ such that \smash{$\lex_{I,\gamble}(g)=\min\big(g-\mu(\gamble-\min I)\big)$} and $\lex_{I,\gamble}(h) \allowbreak=\min\big(h-\lambda(\max I-\gamble)\big)$. 
If $\mu \geq \lambda\geq0$, it follows that
\begin{align*}
\lex_{I,\gamble}(g)+\lex_{I,\gamble}(h)
&=\min\big(g-\mu(\gamble-\min I)\big)+\min\big(h-\lambda(\max I-\gamble)\big) \\
&\leq \min\big(g+h-\mu(\gamble-\min I)-\lambda(\max I-\gamble)\big) \\
&= \min\big(g+h-(\mu-\lambda)(\gamble-\min I)-\lambda(\max I-\min I)\big) \\
&\leq \min\big(g+h-(\mu-\lambda)(\gamble-\min I)\big) \\
&\leq \max\big\{\min(g+h-\alpha(\gamble-\min I)):\alpha \geq 0\big\} \\
&= \lex_{\min I,\gamble}(g+h)
\leq \lex_{I,\gamble}(g+h).
\end{align*}
If $\lambda \geq \mu \geq 0$, a similar argument yields that $\lex_{I,\gamble}(g)+\lex_{I,\gamble}(h)\leq \lex_{-\max I, -\gamble}(g+h) \leq \lex_{I,\gamble}(g+h)$. Hence, $\lex_{I,\gamble}$ is indeed a coherent lower expectation.

Next, we consider any coherent lower expectation $\lex'$ such that $\min I \leq \lex'(\gamble)$ and \smash{$\uex'(\gamble)\leq\max I$}. 
From Proposition~\ref{prop:Ef} and the conjugacy of coherent lower and upper expectations it follows that $\smash{\lex_{\min I,\gamble}} \leq \lex'$ and $\smash{\lex_{-\max I,-\gamble}} \leq \lex'$ since $\lex'(\gamble) \geq \min I$ and $\lex'(-\gamble) = - \uex'(\gamble) \geq -\max I$, respectively. 
Hence, $\lex_{I,\gamble}\leq\lex'$ because, for every $g \in \gambles$, $\allowbreak \smash{\lex_{I,\gamble}}(g)=\max\big\{\lex_{\min I,\gamble}(g),\lex_{-\max I,-\gamble}(g)\big\} \leq \lex'(g)$.

Finally, we prove that $\smash{\lex_{I,\gamble}(\gamble)}=\min I$ and $\uex_{I,\gamble}(\gamble)=\max I$.
Since,\break $\smash{\lex_{\min I,\gamble}}(\gamble)\allowbreak =\min I$, $\smash{\uex_{\min I,\gamble}}(\gamble)=\max \gamble$, $\smash{\lex_{-\max I,-\gamble}}(\gamble)= -\smash{\uex_{-\max I,-\gamble}}(-\gamble)=-\max(-\gamble)=\min\gamble$ and $\smash{\uex_{-\max I,-\gamble}}(\gamble)=\smash{-\lex_{-\max I,-\gamble}(-\gamble)}=-(-\max I)=\max I$ due to Proposition~\ref{prop:Ef} and the conjugacy of coherent lower and upper expectations, it follows that 
\begin{align*}
\lex_{I,\gamble}(\gamble)
= \max\big\{\lex_{\min I, \gamble}(\gamble), \lex_{-\max I , -\gamble}(\gamble)  \big\}
= \max\big\{\min I, \min \gamble \big\}  =\min I 
\end{align*}
and 
\begin{align*}
\uex_{I,\gamble}(\gamble)
=-\lex_{I,\gamble}(-\gamble) 
&= -\max\big\{\smash{\lex_{\min I, \gamble}(-\gamble)}, \smash{\lex_{-\max I , -\gamble}(-\gamble)}  \big\} \\
&=\min\big\{\smash{\uex_{\min I, \gamble}(\gamble)}, \smash{\uex_{-\max I , -\gamble}(\gamble)}  \big\} \\
&= \min\big\{\max \gamble, \max I \big\}
=\max I,
\end{align*} 
since $I \subseteq \sqgroup{\min \gamble, \max \gamble}$ and due to the conjugacy of coherent lower and upper expectations.
\end{proof}

\par
\vspace{\topsep}

\begin{proof}{\bf of Proposition~\ref{prop:monotonicityInterval}\quad}
Consider any gamble $\gamble \in \gambles$ and an interval $I \in \mathcal{I}_\gamble$.
In accordance with Proposition~\ref{prop:Ef-f}, we consider the coherent lower expectation $\lex_{I,\gamble}$ for which $\lex_{I,\gamble}(\gamble)= \min I$ and $\uex_{I,\gamble}(\gamble)= \max I$. 
Due to Corollary~\ref{cor:existence}, there is at least one path $\pth$, such that $\pth$ is computably random for $\lex_{I,\gamble}$ and hence, $I \in \mathcal{I}_\gamble(\pth)$ due to Definition~\ref{def:interval}, which proves statement~\emph{\ref{subprop:existence}}.

Next, consider any path $\pth \in \pths$. 
Due to Corollary~\ref{cor:VacuousE},
$\pth$ is computably random for the vacuous coherent lower expectation $\lex_v$ for which $\lex_v(\gamble)=\min \gamble$ and $\smash{\uex_v}(\gamble)=-\lex_v(-\gamble)=-\min(-\gamble)=\max\gamble$.
Because of Definition~\ref{def:interval}, this implies that $\sqgroup{\min \gamble, \max \gamble} \in \mathcal{I}_\gamble(\pth)$, which proves statement~\emph{\ref{subprop:non-emptiness}}.

Lastly, consider any path $\pth \in \pths$, any $I \in \mathcal{I}_\gamble(\pth)$ and any $I' \in \mathcal{I}_\gamble(\pth)$ such that $I \subseteq I'$. 
Since $I \in \mathcal{I}_\gamble(\pth)$, it follows from Definition~\ref{def:interval} that there is a coherent lower expectation $\lex$ such that $\pth$ is computably random for $\lex$, $\lex(\gamble)= \min I$ and $\smash{\uex(\gamble)}= \max I$.
Since $I \subseteq I'$, $\min I' \leq \min I =\lex(f)$ and $\smash{\uex(f)}=\max I \leq \max I'$.
Since $\lex$ is a coherent lower expectation, it therefore follows from Proposition~\ref{prop:Ef-f} that $\smash{\lex_{I',\gamble}} \leq \lex$.
Since $\pth$ is computably random for $\lex$, Corollary~\ref{cor:monotonicityStationary} now tells us that $\pth$ is also computably random for $\smash{\lex_{I',\gamble}}$.
Due to Definition~\ref{def:interval} and Proposition~\ref{prop:Ef-f}, it follows that $\sqgroup{\smash{\lex_{I',\gamble}}(\gamble),\smash{\uex_{I',\gamble}}(\gamble)}=\sqgroup{\min I', \max I'}=I' \in \mathcal{I}_\gamble(\pth)$.
\end{proof}

\par
\vspace{\topsep}

Consider a gamble $\gamble \in \gambles$ and an interval $I \in \mathcal{I}_\gamble$. 
For the proof of Proposition~\ref{prop:intersection}, it will be useful to know for any gamble $g \in \gambles$ such that \smash{$\uex_{I,\gamble}(g) \leq 1$}, or equivalently (due to Equation~\eqref{form:Ef-f} and the conjugacy of coherent lower and upper expectations), any $g \in \gambles$ such that $\min\big\{\smash{\uex_{\min I,\gamble}(g)},\smash{\uex_{-\max I,-\gamble}(g)} \big\} \leq 1$, whether $\smash{\uex_{\min I,\gamble}}(g) \leq 1$ and/or \smash{$\uex_{-\max I,-\gamble}(g) \leq 1$}.
To that end, we will consider the set $A_{\gamma,\gamble} \coloneq\big\{x \in \posspace: \gamble(x)-\gamma < 0\big\}$.

\begin{lemma}\label{lemma:hulp}
Consider two gambles $h,g \in \gambles$ and a real $\gamma \in \sqgroup{\min h,\max h}$.
If $g(x)>1$ for some $x \in A_{\gamma,h}^c$, then $\uex_{\gamma, h}(g) > 1$.
\end{lemma}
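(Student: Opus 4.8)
The plan is to unfold $\uex_{\gamma,h}$ into an explicit minimax expression and then exploit the defining sign condition on $A_{\gamma,h}^{c}$. First I would apply conjugacy, $\uex_{\gamma,h}(g)=-\lex_{\gamma,h}(-g)$, insert the defining formula~\eqref{form:Ef} for $\lex_{\gamma,h}(-g)$, and push the two minus signs through the inner $\min$ and the outer $\max$ (each sign flip turning a $\min$ into a $\max$ and conversely). This yields the dual representation
\[
\uex_{\gamma,h}(g)=\min_{\mu\geq0}\max_{x\in\posspace}\big[g(x)+\mu(h(x)-\gamma)\big].
\]

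Next, I would fix some $x_0\in A_{\gamma,h}^{c}$ with $g(x_0)>1$, which exists by hypothesis. By the very definition $A_{\gamma,h}=\{x\in\posspace:h(x)-\gamma<0\}$, membership of $x_0$ in the complement means precisely that $h(x_0)-\gamma\geq0$. Hence for every $\mu\geq0$ the term $\mu(h(x_0)-\gamma)$ is non-negative, so the inner maximum, being at least its value at $x_0$, satisfies
\[
\max_{x\in\posspace}\big[g(x)+\mu(h(x)-\gamma)\big]\geq g(x_0)+\mu(h(x_0)-\gamma)\geq g(x_0)>1.
\]

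Finally, since this lower bound $g(x_0)$ is uniform in $\mu$, taking the minimum over $\mu\geq0$ in the dual representation preserves it, giving $\uex_{\gamma,h}(g)\geq g(x_0)>1$, as desired. There is no genuine obstacle here: the only two points requiring care are getting the sign flips right when deriving the minimax form from~\eqref{form:Ef} and conjugacy, and observing that the crucial lower bound $g(x_0)$ does not depend on $\mu$, so that the strict inequality survives the minimization over $\mu\geq0$.
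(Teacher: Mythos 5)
Your proposal is correct and follows essentially the same route as the paper's own proof: conjugacy $\uex_{\gamma,h}(g)=-\lex_{\gamma,h}(-g)$, rewriting Equation~\eqref{form:Ef} into the dual form $\min_{\mu\geq0}\max\big(g+\mu(h-\gamma)\big)$, and then lower-bounding the inner maximum by its value at a point $x_0\in A_{\gamma,h}^{c}$ with $g(x_0)>1$, where $h(x_0)-\gamma\geq0$ makes the bound uniform in $\mu$. The only cosmetic difference is that the paper evaluates $\min_{\mu\geq 0}\{g(x_0)+\mu(h(x_0)-\gamma)\}=g(x_0)$ at $\mu=0$, whereas you drop the non-negative term $\mu(h(x_0)-\gamma)$ before minimising; these are the same observation.
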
 

\begin{proof}{\bf of Lemma~\ref{lemma:hulp}\quad}
Consider any $x \in \smash{A_{\gamma, h}^c}$ such that $g(x)>1$.
Since $x \in \smash{A_{\gamma, h}^c}$, we know that $h(x)-\gamma \geq 0$.
Furthermore, since $\gamma \in \sqgroup{\min h,\max h}$, it follows from Proposition~\ref{prop:Ef} that $\lex_{\gamma,h}$ is a coherent lower expectation.
It therefore follows from the conjugacy of coherent lower and upper expectations and Equation~\eqref{form:Ef} that
\begin{align*}
\uex_{\gamma, h}(g)
=-\lex_{\gamma, h}(-g)
&=-\max\big\{\min(-g-\mu(h-\gamma)): \mu \geq 0\big\} \\
&=\min\big\{\max(g+\mu(h-\gamma)): \mu \geq 0\big\} \\
&\geq \min \big\{g(x)+\mu(h(x)-\gamma): \mu \geq 0\big\} 
=g(x)
> 1,
\end{align*}
where for the last equality, we use the fact that $h(x)-\gamma \geq 0$.
\end{proof}

\begin{lemma}\label{lemma:A}
Consider two gambles $f,g \in \gambles$ and an interval $I \in \mathcal{I}_\gamble$ such that $\smash{\uex_{I,\gamble}}(g) \leq 1$. If $g(x)>1$ for some $x \in A_{\min I,\gamble}$, then $\uex_{\min I, \gamble}(g) \leq 1$. Otherwise, $\uex_{-\max I, -\gamble}(g) \leq 1$.
\end{lemma}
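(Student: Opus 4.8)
The plan is to reduce everything to the one-sided building block in Lemma~\ref{lemma:hulp}, exploiting the conjugate form of the definition in Equation~\eqref{form:Ef-f}. First I would record the translation that $\uex_{I,\gamble}(g)\leq1$ is, by conjugacy applied to Equation~\eqref{form:Ef-f}, equivalent to $\min\{\uex_{\min I,\gamble}(g),\uex_{-\max I,-\gamble}(g)\}\leq1$, so that at least one of the two one-sided upper expectations is already $\leq1$; the whole point of the lemma is to decide \emph{which} one. I would also fix the two relevant sets explicitly, namely $A_{\min I,\gamble}=\{x\in\posspace:\gamble(x)<\min I\}$ and $A_{-\max I,-\gamble}=\{x\in\posspace:\gamble(x)>\max I\}$, and observe that since $I\in\mathcal{I}_\gamble$ forces $\min\gamble\leq\min I\leq\max I\leq\max\gamble$, these two sets are disjoint; in particular $A_{\min I,\gamble}\subseteq A_{-\max I,-\gamble}^c$. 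The same bounds give $\min I\in[\min\gamble,\max\gamble]$ and $-\max I\in[\min(-\gamble),\max(-\gamble)]$, which is exactly what is needed to apply Lemma~\ref{lemma:hulp} with $(h,\gamma)=(\gamble,\min I)$ and with $(h,\gamma)=(-\gamble,-\max I)$, respectively.

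For the first claim, suppose $g(x)>1$ for some $x\in A_{\min I,\gamble}$. Since $A_{\min I,\gamble}\subseteq A_{-\max I,-\gamble}^c$, this same $x$ lies in $A_{-\max I,-\gamble}^c$, so applying Lemma~\ref{lemma:hulp} with $h=-\gamble$ and $\gamma=-\max I$ yields $\uex_{-\max I,-\gamble}(g)>1$. Combining this with $\min\{\uex_{\min I,\gamble}(g),\uex_{-\max I,-\gamble}(g)\}\leq1$ forces the minimum to be attained by the other term, giving $\uex_{\min I,\gamble}(g)\leq1$, as required.

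For the remaining case, where $g(x)\leq1$ for every $x\in A_{\min I,\gamble}$, I would argue by contradiction: assume $\uex_{-\max I,-\gamble}(g)>1$. Then $\min\{\uex_{\min I,\gamble}(g),\uex_{-\max I,-\gamble}(g)\}\leq1$ again forces $\uex_{\min I,\gamble}(g)\leq1$, and the contrapositive of Lemma~\ref{lemma:hulp} with $h=\gamble$ and $\gamma=\min I$ then shows $g(x)\leq1$ for all $x\in A_{\min I,\gamble}^c$. Together with the case hypothesis on $A_{\min I,\gamble}$ this gives $g(x)\leq1$ for every $x\in\posspace$, i.e.\ $\max g\leq1$. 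But by Proposition~\ref{prop:Ef} the lower expectation $\lex_{-\max I,-\gamble}$ is coherent, so its conjugate satisfies \ref{prop:coherence:bounds}, whence $\uex_{-\max I,-\gamble}(g)\leq\max g\leq1$, contradicting the assumption. Hence $\uex_{-\max I,-\gamble}(g)\leq1$.

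The truly routine steps are the conjugacy translation and the final appeal to the bound \ref{prop:coherence:bounds}; the one place that demands care, and the main potential pitfall, is the set bookkeeping: correctly identifying $A_{\min I,\gamble}$ and $A_{-\max I,-\gamble}$, matching the pair $(h,\gamma)$ handed to Lemma~\ref{lemma:hulp} to the intended one-sided model, and checking that $\gamma$ lies in $[\min h,\max h]$ so that the lemma applies. Once the inclusion $A_{\min I,\gamble}\subseteq A_{-\max I,-\gamble}^c$ is secured, each case is settled by a single invocation of Lemma~\ref{lemma:hulp}.
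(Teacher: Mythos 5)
Your proof is correct and takes essentially the same approach as the paper's: the disjointness of $A_{\min I,\gamble}$ and $A_{-\max I,-\gamble}$, the identity $\uex_{I,\gamble}(g)=\min\big\{\uex_{\min I,\gamble}(g),\uex_{-\max I,-\gamble}(g)\big\}$ obtained by conjugacy from Equation~\eqref{form:Ef-f}, and two applications of Lemma~\ref{lemma:hulp} with the pairs $(-\gamble,-\max I)$ and $(\gamble,\min I)$. The only cosmetic difference is that you settle the second case by contradiction, whereas the paper splits it directly into the subcases $g(x)>1$ for some $x\in A_{\min I,\gamble}^c$ (handled by Lemma~\ref{lemma:hulp}) and $g\leq1$ (handled by \ref{prop:coherence:bounds}); the ingredients and logic are identical.
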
\label{prop}
\newpage

\begin{proof}{\bf of Lemma~\ref{lemma:A}\quad}
We start with proving that $A_{\min I, \gamble} \cap A_{-\max I, -\gamble} = \emptyset$. 
Assume \emph{ex absurdo} that there is an $x \in \posspace$ such that $x \in A_{\min I, \gamble}$ and $x \in A_{-\max I, -\gamble}$, then $f(x)-\min I < 0$ and $\max I -f(x) <0$. 
This leads to a contradiction since $f(x)-\min I + \max I -f(x) = \max I - \min I \geq 0$.
Consequently, $A_{\min I, \gamble} \cap A_{-\max I, -\gamble} = \emptyset$ and therefore $\smash{A_{\min I, \gamble}} \subseteq \smash{A_{-\max I, -\gamble}^c}$.

Now, if $g(x)>1$ for some $x \in \smash{A_{\min I, \gamble}} \subseteq \smash{A_{-\max I, -\gamble}^c}$, it follows from Lemma~\ref{lemma:hulp}, with $\gamma=-\max I$ and $h=- \gamble$, that $\uex_{-\max I, -\gamble}(g) > 1$.
Since $I \in \mathcal{I}_\gamble$, and hence $\min \gamble \leq \min I \leq \max \gamble$ and $\min (-\gamble) \leq -\max I \leq \max (-\gamble)$ (because $\max \gamble \geq \max I \geq \min \gamble$), it follows from Proposition~\ref{prop:Ef} and \ref{prop:Ef-f} that \smash{$\lex_{\min I,\gamble}$}, \smash{$\lex_{-\max I,-\gamble}$} and \smash{$\lex_{I,\gamble}$} are coherent lower expectations. 
By inspecting Equation~\eqref{form:Ef-f} and using the conjugacy of coherent lower and upper expectations, it follows that 
\begin{align}
\uex_{I,\gamble}(g)
=-\smash{\lex_{I,\gamble}}(-g)
&=-\max\big\{\lex_{\min I,\gamble}(-g), \lex_{-\max I,-\gamble}(-g) \big\} \nonumber \\
&=\min\big\{-\lex_{\min I,\gamble}(-g), -\lex_{-\max I,-\gamble}(-g) \big\} \nonumber \\
&=\min\big\{\uex_{\min I,\gamble}(g), \uex_{-\max I,-\gamble}(g) \big\}. \label{form:upperI}
\end{align} 
Since $\uex_{I,\gamble}(g)\leq 1$ by assumption, and since \smash{$\uex_{-\max I, -\gamble}(g) > 1$}, it therefore follows that $\smash{\uex_{\min I, \gamble}}(g) \leq 1$.

If there is no $x \in \smash{A_{\min I, \gamble}}$ such that $g(x)>1$, then there are two possibilities: either $g(x)>1$ for some $x \in \smash{A^c_{\min I, \gamble}}$, or $g \leq 1$.
If $g(x)>1$ for some $x \in \smash{A^c_{\min I, \gamble}}$, it follows from Lemma~\ref{lemma:hulp}, with $\gamma=\min I$ and $h=\gamble$, that $\uex_{\min I, \gamble}(g) > 1$.
Since $\smash{\uex_{I,\gamble}(g)}\leq 1$ by assumption, and since $\smash{\uex_{I,\gamble}}(g)=\min\{\smash{\uex_{\min I, \gamble}(g)},\smash{\uex_{-\max I, -\gamble}(g)}\}$ because of Equation~\eqref{form:upperI}, it therefore follows that $\smash{\uex_{-\max I, -\gamble}}(g) \leq 1$.
If $g \leq 1$, then also $\smash{\uex_{-\max I, -\gamble}}(g) \leq 1$, but now because of \ref{prop:coherence:bounds}.
\end{proof}

\par
\vspace{\topsep}

\begin{proof}{\bf of Proposition~\ref{prop:intersection}\quad}
Consider a path $\pth \in \pths$, a gamble $\gamble \in \gambles$ and two intervals $I,I' \in \mathcal{I}_\gamble(\pth)$. 
Without loss of generality, we may assume that  $\min I \geq \min I'$ (if not, switch the role of $I$ and $I'$). We need to prove that $I \cap I' \neq \emptyset$ and $I \cap I' \in \mathcal{I}_\gamble(\pth)$.
We consider two cases: $\max I \leq \max I'$ and $\max I > \max I'$. If $\max I \leq \max I'$, then $I \cap I'= I \neq \emptyset$ because $\min I \geq \min I'$, so $I \cap I' = I \in \mathcal{I}_\gamble(\pth)$ by assumption. So it remains to consider the case $\max I > \max I'$.

We first show that the intersection $I \cap I'$ is non-empty.
Since $\pth$ is computably random for the gamble $\gamble$ and the intervals $I$ and $I'$, it follows from Definition~\ref{def:interval} that there are two coherent lower expectations $\lex$ and $\lex'$ for which $\pth$ is computably random such that $\lex(\gamble)= \min I$, $\smash{\uex(\gamble)}=\max I$, $\lex'(\gamble)= \min I'$ and $\smash{\uex'(\gamble)}=\max I'$.
It then follows from Corollary~\ref{cor:ChurchRandomnessStationary}, for the recursive selection process $\selection$ defined by $\selection(\sit)\coloneq1$ for all $\sit \in \sits$, that
\begin{align*}
\min I = \lex(\gamble) \leq \liminf_{n \to \infty} \frac{\sum_{k=0}^{n-1}\gamble(\xvalnextsum)}{n} 
\leq \limsup_{n \to \infty} \frac{\sum_{k=0}^{n-1} \gamble(\xvalnextsum)}{n} 
\leq \uex'(\gamble) = \max I'.
\end{align*}
Since $\min I' \leq \min I$ and $\max I' < \max I$, this implies that $I \cap I'=\sqgroup{\min I, \max I'} \allowbreak \neq \emptyset$.

Next, we will show that $I \cap I'=\smash{\sqgroup{\min I, \max I'}} \in \smash{\mathcal{I}_\gamble(\pth)}$. 
To that end, we will consider the lower expectation \smash{$\lex_{I \cap I',\gamble}$} and prove that $\pth$ is computably random for \smash{$\lex_{I \cap I',\gamble}$}. 
Since, according to Proposition~\ref{prop:Ef-f}, \smash{$\lex_{I \cap I',\gamble}$} is a coherent lower expectation that satisfies $\smash{\lex_{I \cap I',\gamble}(\gamble)}=\min{I \cap I'}=\min I$ and $\uex_{I \cap I',\gamble}(\gamble)=\max{I \cap I'}=\max I'$, it then follows from Definition~\ref{def:interval} that $I \cap I' \in \mathcal{I}_\gamble(\pth)$. 
To prove that $\pth$ is computably random for $\smash{\lex_{I \cap I',\gamble}}$, we now consider any recursive positive rational strict test supermartingale $\test \in \smash{\callowabletests{\lex_{I \cap I',\gamble}}}$ and will show that $\test$ remains bounded above along $\pth$. 
It then follows from Proposition~\ref{prop:equivalent} that $\pth$ is computably random for $\smash{\lex_{I \cap I',\gamble}}$.

Since $\test$ is positive and $\test(\init)=1$, it has a unique positive multiplier process $\multprocess$ such that $\test=\mint$.
Moreover, since $\test(\bullet)= \mint(\bullet)$ is a recursive rational positive process, it follows that for every $x \in \posspace$, $D(\bullet)(x)=\smash{\frac{\test(\bullet x)}{\test(\bullet)}}$ is a recursive rational process and hence, $\multprocess$ is a recursive rational multiplier process. 
Furthermore, $\multprocess$ is a supermartingale multiplier since for all $\sit \in \sits$, it holds that $D(\sit)(x)=\smash{\frac{\test(\sit x)}{\test(\sit)}}=\smash{\frac{\test(\sit)+\adddelta \test(\sit)(x)}{\test(\sit)}}=1+\frac{\adddelta \test(\sit)(x)}{\test(\sit)}$ for all $x \in \posspace$, and hence
\[
\uex_{I \cap I',\gamble}(D(\sit))
= \uex_{I \cap I',\gamble}\bigg(1+\frac{\adddelta\test(\sit)}{\test(\sit)}\bigg) 
\overset{\textrm{\ref{prop:coherence:homogeneity},\ref{prop:coherence:constantadditivity}}}{=} 1+\frac{1}{\test(\sit)}\uex_{I \cap I',\gamble}(\adddelta\test(\sit))
\leq 1,
\]
because $\test(\sit) >0$ and $\smash{\uex_{I \cap I',\gamble}(\adddelta \test(\sit))}\leq 0 $.
Consider now the rational process $H$ defined by
\[
H(s) \coloneq \begin{cases}
1 &\textrm{if } D(s)(x)>1 \textrm{ for some } x \in A_{\min I \cap I',\gamble},\\
0 &\textrm{otherwise},
\end{cases}
\]
for every $\sit \in \sits$. 
Since $\multprocess$ is recursive, $H$ is recursive as well.
Now, let $D_1$ and $D_2$ be two maps from situations to gambles on $\posspace$, defined by
\[
D_1(s)\coloneq H(s)D(s) +(1-H(s))
\]
and
\[
D_2(s) \coloneq (1-H(s))D(s)+H(s)
\]
for all $\sit \in \sits$.
For every $\sit \in \sits$, if $H(s)=1$, then $D_1(s)=D(s)$ and $D_2(s)=1$, and if $H(s)=0$, then $D_1(s)=1$ and $D_2(s)=D(s)$.
We will show that $D_1$ and $D_2$ are recursive rational positive supermartingale multipliers for $\lex_{\min I,\gamble}$ and $\lex_{-\max I',-\gamble}$, respectively, which are both coherent lower expectations according to Proposition~\ref{prop:Ef} since $\min I \in \smash{\sqgroup{\min \gamble, \max \gamble}}$ and $-\max I' \in \sqgroup{\min(-\gamble),\max(-\gamble)}$ (because $\max I' \in \sqgroup{\min{\gamble},\max{\gamble}}$).

That $D_1$ and $D_2$ are positive follows from the positivity of $\multprocess$ and $1$.
Since $D$ is a recursive rational supermartingale multiplier process and $H$ is a recursive rational process, it follows that for every $x \in \posspace$, $D_1(\bullet)(x)=H(\bullet)D(\bullet)(x) +(1-H(\bullet))$ and $D_2(\bullet)(x)=(1-H(\bullet))D(\bullet)(x)+H(\bullet)$ are recursive rational processes, and hence, $D_1$ and $D_2$ are recursive rational multiplier processes.

Let us now show that they are supermartingale multipliers for $\smash{\lex_{\min I,\gamble}}$ and $\smash{\lex_{-\max I',-\gamble}}$, respectively.
For any $\sit \in \sits$, we have that $\smash{\uex_{\min I,\gamble}(D_1(\sit))} \leq 1$. 
Indeed, if $H(\sit)=0$, then $D_1(\sit)=1$ and hence, $\smash{\uex_{\min I,\gamble}(D_1(\sit))} = \smash{\uex_{\min I,\gamble}(1)} \leq 1$ due to \ref{prop:coherence:bounds}. 
If $H(\sit)=1$, then $D_1(\sit)=D(\sit)$ and $D(\sit)(x)>1$ for some $x \in A_{\min I \cap I',\gamble}$.
Due to Lemma~\ref{lemma:A} and since $\uex_{I \cap I',\gamble}(D(\sit)) \leq 1$, it then follows that $\uex_{\min I \cap I', \gamble}(D(\sit)) \leq 1$.
Since $\min I \cap I' = \min I$ and $D_1(\sit)=D(\sit)$, this implies that $\smash{\uex_{\min I , \gamble}(D_1(\sit))} \leq 1$.
In a similar way, we can prove that $\uex_{-\max I',-\gamble}(D_2(\sit))\leq1$ for every $\sit \in \sits$.
If $H(\sit)=1$, then $D_2(\sit)=1$ and hence, $\smash{\uex_{-\max I',-\gamble}}(D_2(\sit)) = \smash{\uex_{-\max I',-\gamble}}(1) \leq 1$ due to \ref{prop:coherence:bounds}. 
If $H(\sit)=0$, then $D_2(\sit)=D(\sit)$ and $D(\sit)(x) \leq 1$  for all $x \in A_{\min I \cap I',\gamble}$.
Since $\uex_{I \cap I',\gamble}(D(\sit))\leq 1$ and due to Lemma~\ref{lemma:A}, it then follows that $\smash{\uex_{-\max I \cap I', -\gamble}(D(\sit))} \leq 1$.
Since $\max I \cap I' = \max I'$ and $D_2(\sit)=D(\sit)$, this implies that $\smash{\uex_{-\max I', -\gamble}(D_2(\sit))} \leq 1$.

Hence, $D_1$ and $D_2$ are indeed recursive rational positive supermartingale multipliers for \smash{$\lex_{\min I,\gamble}$} and \smash{$\lex_{-\max I',-\gamble}$}, respectively. 
Due to Lemma~\ref{lemma:supermultitestsuper} and Proposition~\ref{prop:computable:from:multiplier}, and since all recursive rational multiplier processes are also computable real multiplier processes, we find that $\test_1\coloneq\mint_1$ and $\test_2\coloneqq\mint_2$ are computable positive test supermartingales for $\lex_{\min I,\gamble}$ and $\lex_{-\max I',-\gamble}$, respectively, i.e., \smash{$\test_1 \in \callowabletests{\lex_{\min I,\gamble}}$} and $\test_2 \in \smash{\callowabletests{\lex_{-\max I',-\gamble}}}$. 

Now recall the coherent lower expectations $\lex, \lex' \in \frcstsystemscompstat$ that we considered before. 
Since $\lex(\gamble) = \min I$, it follows from Proposition~\ref{prop:Ef} that $\smash{\lex_{\min I,\gamble}} \leq \lex$, and hence, $\smash{\lex_{\min I, \gamble} \in \frcstsystemscompstat}$ due to Corollary~\ref{cor:monotonicityStationary}. 
Similarly, since $\smash{\lex'(-\gamble)}=-\uex'(\gamble)=-\max I'$, it follows from Proposition~\ref{prop:Ef} that $\lex_{-\max I',-\gamble} \leq \lex'$, and hence, $\lex_{-\max I', -\gamble} \in \frcstsystemscompstat$ due to Corollary~\ref{cor:monotonicityStationary}. 
So $\pth$ is computably random for the coherent lower expectations $\lex_{\min I,\gamble}$ and $\lex_{-\max I',-\gamble}$.
Since \smash{$\test_1 \in \callowabletests{\lex_{\min I,\gamble}}$} and $\test_2 \in \smash{\callowabletests{\lex_{-\max I',-\gamble}}}$, it therefore follows from Definition~\ref{def:randomsequences} that $\test_1(\pth^n)$ and $\test_2(\pth^n)$ remain bounded above as $n \to \infty$.

Now observe that for any $\sit \in \sits$, since $H(\sit)=1$ or $H(\sit)=0$, it follows that $D_1(\sit)D_2(\sit)= D(\sit)$. 
Hence, $D_1 D_2 = D$ and therefore $\test=\mint=(D_1 D_2)^\circledcirc =\mint_1 \mint_2 = \test_1 \test_2$. Since both $\test_1$ and $\test_2$ remain bounded above along $\pth$, and since $\test_1$ and $\test_2$ are positive, this implies that $\test$ remains bounded above along $\pth$.
\end{proof}

\par 
\vspace{\topsep}

\begin{proof}{\bf of Proposition~\ref{prop:woehoe}\quad}
Consider a path $\pth$, a gamble $\gamble$ and a closed interval $I$.
Since $L_\gamble(\pth)\coloneq\{\min I: I \in \mathcal{I}_\gamble(\pth)\}$ and $U_\gamble(\pth)\coloneq\{\max I: I \in \mathcal{I}_\gamble(\pth)\}$, the `only if'-part holds by construction. 
For the `if'-part, assume that $\min I \in L_\gamble(\pth)$ and $\max I \in U_\gamble(\pth)$. 
It then follows from the definition of $L_\gamble(\pth)$ and $U_\gamble(\pth)$ that there are two intervals $I_1,I_2 \in \mathcal{I}_\gamble(\pth)$ such that $\min I_1 = \min I$ and $\max I_2 = \max I$.
Due to Proposition~\ref{prop:intersection}, it follows that $I_1 \cap I_2 \neq \emptyset$ and $I_1 \cap I_2 \in \mathcal{I}_\gamble(\pth)$.
Since $I_1 \cap I_2 \neq \emptyset$, it follows that 
\begin{align*}
I_1 \cap I_2 
&=\sqgroup{\max\{\min I_1,\min I_2\},\min\{\max I_1,\max I_2\}} \\
&=\sqgroup{\max\{\min I,\min I_2\},\min\{\max I_1,\max I\}} 
\subseteq 
\sqgroup{\min I, \max I}
= I. 
\end{align*}
Since $I_1 \cap I_2 \in \mathcal{I}_\gamble(\pth)$ and due to Proposition~\ref{prop:monotonicityInterval}\emph{\ref{subprop:monotonicityforinterval}}, this implies that $I \in \mathcal{I}_\gamble(\pth)$. 
\end{proof}
\end{ArxiveExt}
\end{document}